\tikzset{snakeit/.style={decorate, decoration={snake, amplitude=.2mm,segment length=1mm}}}
\tikzset{ext/.style={circle, draw,inner sep=1pt}, int/.style={circle,draw,fill,inner sep=2pt},nil/.style={inner sep=1pt}}
\tikzset{cy/.style={circle,draw,fill,inner sep=2pt},scy/.style={circle,draw,inner sep=2pt},scyx/.style={draw,cross out,inner sep=2pt},scyt/.style={draw,regular polygon,regular polygon sides=3,inner sep=0.95pt}}
\tikzset{exte/.style={circle, draw,inner sep=3pt},inte/.style={circle,draw,fill,inner sep=3pt}}
\tikzset{diagram/.style={matrix of math nodes, row sep=3em, column sep=2.5em, text height=1.5ex, text depth=0.25ex}}
\tikzset{diagram2/.style={matrix of math nodes, row sep=0.5em, column sep=0.5em, text height=1.5ex, text depth=0.25ex}}
\tikzset{rowcolsep/.style={column sep=.2cm, row sep=.1cm}}
\tikzset{cross/.style={cross out,draw}}
\tikzset{every loop/.style={draw}}
\tikzset{
  crossed/.style={
    decoration={markings,mark=at position .5 with {\arrow{|}}},
    postaction={decorate},
    shorten >=0.4pt}}
\tikzset{every picture/.style={baseline=-.65ex} }
\newcommand{\notadp}{{
\begin{tikzpicture}[baseline=-.55ex,scale=.2, every loop/.style={}]
 \node[circle,draw,fill,inner sep=.5pt] (a) at (0,0) {};
 \draw (a) edge[loop] (a);
 \draw (-.3,.1) -- (.3,.5);
\end{tikzpicture}}}
\newcommand{\tp}{{
\begin{tikzpicture}[baseline=-.55ex,scale=.2, every loop/.style={}]
 \node[circle,draw,fill,inner sep=.5pt] (a) at (0,0) {};
 \draw (a) edge[loop] (a);
\end{tikzpicture}}}
\theoremstyle{plain}
  \newtheorem{thm}{Theorem}
  \newtheorem{defi}[thm]{Definition}
  \newtheorem{prop}[thm]{Proposition}
  \newtheorem{cor}[thm]{Corollary}
  \newtheorem{lemma}[thm]{Lemma}
\theoremstyle{definition}
  \newtheorem{rem}{Remark}
\newcommand{\alg}[1]{\mathfrak{{#1}}}
\newcommand{\ad}{{\text{ad}}}
\newcommand{\C}{{\mathbb{C}}}
\newcommand{\K}{{\mathbb{K}}}
\newcommand{\Graphs}{{\mathsf{Graphs}}}
\newcommand{\fGraphs}{{\mathsf{fGraphs}}}
\newcommand{\mF}{\mathcal{F}}
\newcommand{\FreeLie}{\mathrm{FreeLie}}
\newcommand{\FreeCom}{S}
\newcommand{\Lie}{\mathsf{Lie}}
\newcommand{\Ass}{\mathsf{Assoc}}
\newcommand{\Com}{\mathsf{Com}}
\newcommand{\bpm}{\begin{pmatrix}}
\newcommand{\epm}{\end{pmatrix}}
\newcommand{\Aut}{\mathrm{Aut}}
\newcommand{\GC}{\mathsf{GC}}
\newcommand{\HGC}{\mathsf{HGC}}
\newcommand{\G}{\mathsf{G}}
\DeclareMathOperator{\gr}{gr}
\newcommand{\gra}{\mathit{gra}}
\newcommand{\grt}{\alg {grt}}
\newcommand{\MM}{{\mathcal M}}
\newcommand{\Q}{\mathbb{Q}}
\newcommand{\fg}{\mathfrak{g}}
\newcommand{\Sp}{\mathrm{Sp}}
\renewcommand{\Bar}{\mathtt{B}}
\newcommand{\OSp}{\mathrm{OSp}}
\newcommand{\osp}{\mathfrak{osp}}
\renewcommand{\sp}{\mathfrak{sp}}
\newcommand{\ft}{\mathfrak{t}}
\DeclareMathOperator{\tru}{\mathrm{tr}}
\newcommand{\fc}{\mathfrak c}
\newcommand{\POp}{\mathcal P}
\newcommand{\dg}{\mathrm{dg}}
\newcommand{\Alg}{\mathrm{Alg}}
\newcommand{\Free}{\mathbb{F}}
\newcommand{\fG}{\mathsf{fG}}
\newcommand{\GCex}{\GC^\ex}
\newcommand{\Gex}{\G^\ex}
\newcommand{\fGex}{\fG^\ex}
\renewcommand{\ex}{{\mathrm{ex}}}
\newcommand{\krv}{\mathfrak{krv}}
\newcommand{\wgeis}{\FreeLie(\bar H(W_{g,1}))}
\newcommand{\Der}{\mathrm{Der}}
\renewcommand{\div}{\mathrm{div}}
\newcommand{\icggeis}{\icg_{(g),1}}
\newcommand{\icg}{\mathsf{ICG}}
\newcommand{\icgg}{\mathsf{ICG}_{(\mathrm{g})}}
\newcommand{\Graphsg}{\mathsf{Graphs}_{(\mathrm{g})}}
\newcommand{\GCg}{\mathsf{GC}_{(\mathrm{g})}}
\newcommand{\cut}{\mathrm{cut}}
\author{Matteo Felder}
\address{Department of Mathematics\\ ETH Zurich\\ R\"amistrasse 101 \\ 8092 Zurich, Switzerland}
\email{matteo.felder@math.ethz.ch}
\author{Florian Naef}
\address{Department of Mathematical Sciences, University of Copenhagen, Copenhagen, Denmark}
\email{flna@math.ku.dk}
\author{Thomas Willwacher}
\address{Department of Mathematics\\ ETH Zurich\\ R\"amistrasse 101 \\ 8092 Zurich, Switzerland}
\email{thomas.willwacher@math.ethz.ch}
\begin{document}
\title{Stable cohomology of graph complexes}

\thanks{
  M.F. and T.W. have been supported by the ERC starting grant 678156 GRAPHCPX, and the NCCR SwissMAP, funded by the Swiss National Science Foundation. F.N. has been supported by the Marie Sklodowska-Curie grant 896370.
}



\begin{abstract}
We study three graph complexes related to the higher genus Grothendieck-Teichmüller Lie algebra and diffeomorphism groups of manifolds. We show how the cohomology of these graph complexes is related, and we compute the cohomology as the genus $g$ tends to $\infty$. As a byproduct, we find that the Malcev completion of the genus $g$ mapping class group relative to the symplectic group is Koszul in the stable limit (partially answering a question of Hain). Moreover, we obtain that any elliptic associator gives a solution to the elliptic Kashiwara-Vergne problem.
\end{abstract}

\maketitle

\section{Introduction}
Let $m$ be a fixed positive integer.
In this paper we study three related graph complexes that arise in connection to the diffeomorphism groups of the manifolds 
\[
  W_g = \#^g S^{m}\times S^m\quad \text{and} \quad W_{g,1} = W_g \setminus D^{2m}.
\]
More precisely, 
we consider graph complexes $\GC_{(g)}$, $\GC_{(g),1}$ and $\GC_{(g),1}^\tp$ as follows.
\begin{itemize}
\item Elements of $\GC_{(g),1}^\tp$ are $\Q$-linear series of isomorphism classes of connected, at least trivalent graphs whose vertices are decorated by elements of the reduced homology $\bar H_\bullet(W_{g,1})$. As the superscript indicates, these graphs may have tadpoles, that is edges connecting a vertex to itself.
\[
\begin{tikzpicture}[yshift=-.5cm]
\node[int,label=180:{$\gamma$}] (v1) at (0,0) {};
\node[int] (v2) at (1,0) {};
\node[int,label=90:{$\alpha\beta$}] (v3) at (0,1) {};
\node[int] (v4) at (1,1) {};
\draw (v1) edge (v3) edge (v2) edge (v4) 
    (v4) edge[loop] (v4) edge (v2) edge (v3) 
    (v3) edge (v2);
\end{tikzpicture}
\quad\quad \text{with $\alpha,\beta,\gamma \in H_m(W_{g,1})$}
\]
The differential on these complexes has two terms, $\delta=\delta_{split}+\delta_{glue}$. The piece $\delta_{split}$ is defined by summing over vertices, and splitting the vertex.
  \begin{align}\label{equ:deltasplit}
    \delta_{split} \Gamma &= \sum_{v \text{ vertex} }  \pm 
    \Gamma\text{ split $v$} 
    &
    \begin{tikzpicture}[baseline=-.65ex]
    \node[int] (v) at (0,0) {};
    \draw (v) edge +(-.3,-.3)  edge +(-.3,0) edge +(-.3,.3) edge +(.3,-.3)  edge +(.3,0) edge +(.3,.3);
    \end{tikzpicture}
    &\mapsto
    \sum
    \begin{tikzpicture}[baseline=-.65ex]
    \node[int] (v) at (0,0) {};
    \node[int] (w) at (0.5,0) {};
    \draw (v) edge (w) (v) edge +(-.3,-.3)  edge +(-.3,0) edge +(-.3,.3)
     (w) edge +(.3,-.3)  edge +(.3,0) edge +(.3,.3);
    \end{tikzpicture}
  \end{align}
The piece $\delta_{glue}\Gamma$ is defined on a graph $\Gamma$ by summing over all pairs $(\alpha,\beta)$ of $\bar H_\bullet(W_{g,1})$-decorations in the graph $\Gamma$, replacing the pair of decorations by an edge, and multiplying the graph with the numeric prefactor $\langle \alpha,\beta\rangle$, using the canonical pairing $\langle -,-\rangle: \bar H_\bullet(W_{g,1})\times \bar H_\bullet(W_{g,1}) \to \Q$.
\item The complex 
\[
  \GC_{(g),1} = \GC_{(g),1}^\tp / I_g^{\tp}
\]
is the quotient obtained by setting all graphs with tadpoles to zero.
\item The complex $\GC_{(g)}$ is defined similarly to $\GC_{(g),1}$, except for three differences. First, one decorates vertices by $\bar H_\bullet(W_{g})$ instead of $\bar H_\bullet(W_{g,1})$. Second, the piece of the differential $\delta_{glue}$ uses the pairing 
\begin{equation}\label{equ:Hg pairing}
\langle -,-\rangle: H_\bullet(W_{g})\times H_\bullet(W_{g}) \to \Q
\end{equation}
instead.

Third, there is an additional piece of the differential $\delta_Z$ that glues a new vertex to a decoration with the top class $\omega\in H_{2m}(W_{g})$. The new vertex is then decorated by the canonical diagonal element $\Delta_1\in H_m(W_{g})\otimes H_m(W_{g})$.
\begin{align*}
\delta_Z :
\begin{tikzpicture}
\node[int,label=90:{$\omega$}] (v) at (0,0) {};
\draw (v) edge +(-.5,-.5) edge (0,-.5) edge (.5,-.5);
\end{tikzpicture}
\mapsto 
\begin{tikzpicture}
  \node[int] (v) at (0,0) {};
  \node[int,label=90:{$\Delta_1$}] (w) at (0,.7) {};
  \draw (v) edge +(-.5,-.5) edge (0,-.5) edge (.5,-.5) (v) edge (w);
  \end{tikzpicture}
\end{align*}
\end{itemize}

We refer to section \ref{sec:GCs} below for more precise definitions, including signs, prefactors and degrees. We note that these complexes depend on the chosen integer $m$, although this dependence is kept implicit in the notation.
All three of the complexes above are in fact dg Lie algebras, with the Lie brackets defined similarly to $\delta_{glue}$ above, just operating on a pair of decorations on two distinct graphs.

One has natural maps between the above complexes for various $g$ that fit into a commutative diagram of dg Lie algebras
\[
\begin{tikzcd}
  \cdots \ar{r} \ar{d}& \GC_{(g-1),1}^\tp \ar{r}\ar{d} & \GC_{(g),1}^\tp \ar{r}\ar{d}& \GC_{(g+1),1}^\tp \ar{r}\ar{d}& \cdots\ar{d} \\
  \cdots \ar{r} \ar{d}& \GC_{(g-1),1} \ar{r}\ar{d}& \GC_{(g),1} \ar{r}\ar{d}& \GC_{(g+1),1} \ar{r}\ar{d}& \cdots \ar{d}\\
  \cdots & \GC_{(g-1)} &\GC_{(g)} & \GC_{(g+1)} & \cdots
\end{tikzcd}.
\]
Furthermore, one has a natural action of the symplectic or orthogonal group
\[
  \OSp_g=
  \begin{cases}
    \Sp(2g) & \text{for even $m$} \\
    O(g,g) & \text{for odd $m$} 
  \end{cases}
\] 
on all three dg Lie algebras considered.
Moreover, $\GC_{(g)}$ may naturally be extended by a nilpotent, negatively graded Lie algebra $\osp^{nil}_g$ of endomorphisms of $H_g$ that respect the pairing \eqref{equ:Hg pairing}, and we will define below an extended dg Lie algebra 
\[
  \GCex_{(g)} := (\osp_g^{nil}\ltimes \GC_{(g)}, \delta).
\]

All the graph complexes above carry a natural grading  by \emph{weight}, with the weight of a graph with $e$ edges, $v$ vertices and total (homological) decoration degree $Dm$ defined to be the number
\[
W = 2(e-v) + D.
\]
This positive integer valued quantity is preserved by the differentials and Lie brackets. In particular our graph complexes split into a direct product of finite dimensional subcomplexes according to weight.
We shall denote the graded piece of the complexes or cohomology of given weight $W$ by the prefix $\gr^W (\cdots)$.

The purpose of this paper is two-fold. First, we describe the relation between the three dg Lie algebras above, with the following result.

\begin{thm} \label{thm:GC comparison} $ $
  \begin{enumerate}[(i)]
    \item The projection $\GC_{(g),1}^\tp\to \GC_{(g),1}$ defined by setting graphs with tadpoles to zero induces an isomorphism 
    \begin{equation}\label{equ:thm GC comparison 1}
    \gr^W  H(\GC_{(g),1}^\tp) \to \gr^W  H(\GC_{(g),1})
    \end{equation}
    for all $W\geq 2$ and $g\geq 0$. 

    \item For $g\geq 2$ one has a short exact sequence of graded Lie algebras
    \[
    0 \to \pi^\Q S T W_{g} \to H(\GC_{(g),1}) \to H(\GCex_{(g)}) \to 0.
    \]
    with $\pi^\Q S T W_{g}$ being the rational homotopy groups of the unit sphere bundle of the tangent bundle to $W_{g}$.
  \end{enumerate}
\end{thm}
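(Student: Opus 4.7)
The projection has kernel $I_g^\tp \subset \GC_{(g),1}^\tp$ spanned by graphs carrying at least one tadpole, so the statement reduces to showing $\gr^W H(I_g^\tp) = 0$ for $W \geq 2$. I would construct an explicit contracting homotopy exploiting the observation that a tadpole at a vertex $v$ is, up to sign, the image under $\delta_{glue}$ of a pair of decorations on $v$ of the form $\sum_i e_i \otimes e^i$, where $\{e_i\}, \{e^i\}$ are dual bases of $\bar H(W_{g,1})$. Define $h$ on $I_g^\tp$ by removing a chosen tadpole at $v$ and placing this diagonal decoration at $v$, with appropriate signs. A direct computation then shows $[\delta, h] = \mathrm{id}$ on $I_g^\tp$, modulo edge cases involving very low weight that are excluded by $W \geq 2$. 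The statement then follows from the long exact sequence in cohomology associated to $0 \to I_g^\tp \to \GC_{(g),1}^\tp \to \GC_{(g),1} \to 0$.

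\textbf{Plan for (ii).} The natural embedding $\bar H(W_{g,1}) \hookrightarrow \bar H(W_g)$ induces an inclusion of dg Lie algebras $\iota: \GC_{(g),1} \hookrightarrow \GCex_{(g)}$; this is a chain map because $\delta_Z$ vanishes on graphs without $\omega$-decorations, and a Lie algebra map because both brackets are defined by the same gluing operation on decorations. I would first show that $H(\iota)$ is surjective by analyzing the chain-level cokernel, filtered by the number of $\omega$-decorations plus the total degree in $\osp_g^{nil}$. On the associated graded, $\delta_Z$ pairs each $\omega$-decoration with a diagonal-class decoration, making the $\omega$-part acyclic, while the $\osp_g^{nil}$ action absorbs the residual generators. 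To identify the kernel, I would use the Sullivan model of the bundle $S^{2m-1} \to STW_g \to W_g$, under which $\pi^\Q STW_g$ admits a presentation as a free graded Lie algebra on $\bar H(W_g)$ modulo a single quadratic relation coming from the Euler class. These generators and relations admit explicit graphical representatives of low weight in $\GC_{(g),1}$ that become exact after applying $\iota$, yielding a Lie algebra map $\pi^\Q STW_g \to \ker H(\iota)$. A weight-by-weight Poincar\'e series comparison, using the spectral sequence from the surjectivity step, then shows this map is an isomorphism.

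\textbf{Main obstacle.} The principal difficulty will be the spectral sequence bookkeeping in part (ii): verifying that all higher differentials collapse so that the result is indeed a short exact sequence rather than a longer filtration, and that the graded Lie bracket structure is preserved throughout. The hypothesis $g \geq 2$ appears natural because it avoids low-rank pathologies in the $\osp_g$-representation theory on the graph side, and guarantees that $STW_g$ has its generic rational homotopy type on the topological side.
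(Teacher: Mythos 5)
Both parts of your plan contain genuine gaps.

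For (i), the proposed homotopy does not satisfy $[\delta,h]=\mathrm{id}$. Re-gluing the two halves of the inserted diagonal decoration $\Delta_1$ reproduces the tadpole only up to the factor $\langle\Delta_1,\Delta_1\rangle=\pm 2g$, and in addition $\delta_{glue}$ can pair either half of the inserted $\Delta_1$ with \emph{other} decorations of the graph, producing cross-terms that do not cancel against $h\delta$. At best one obtains $[\delta,h]=\pm 2g\,N(\Gamma)\,\Gamma+(\text{lower order terms})$ with respect to a filtration you would still have to set up, and the whole construction collapses for $g=0$, where $\bar H(W_{0,1})=0$, $\delta_{glue}=0$ and your $h$ is identically zero --- yet the theorem is asserted for all $g\geq 0$. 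Note also that $H(I_g^\tp)$ is \emph{not} zero: it is $2g$-dimensional, spanned in weight $1$ by the one-vertex tadpole graphs $\Gamma_\alpha$, so the ``edge cases'' you defer are exactly the surviving cohomology and must be located precisely. The paper instead filters by loop order and then by the number of edges that are not ``long tadpole'' edges, and uses the homotopy that contracts long tadpole edges; the identity $[\delta_{split}',h]=N(\Gamma)\Gamma$ then counts long tadpoles and tadpoles at vertices of valence $\geq 4$, works uniformly in $g$ (including $g=0$), and isolates exactly the classes $\Gamma_\alpha$.

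For (ii), your argument is internally inconsistent: if the cokernel of $\iota\colon\GC_{(g),1}\hookrightarrow\GCex_{(g)}$ were acyclic, then $H(\iota)$ would be an isomorphism and there could be no kernel $\pi^\Q STW_g$. In fact the cokernel is quasi-isomorphic to a shift of the internally connected graph complex $\icgg(1)$, whose cohomology is the Quillen model $\hat{\mathfrak w}_g^{fr}$ of $STW_g$; by the long exact sequence, surjectivity of $H(\iota)$ and the identification of its kernel with $\pi^\Q STW_g$ are \emph{both} equivalent to injectivity of the connecting homomorphism $\hat{\mathfrak w}_g^{fr}\to H(\GC_{(g),1})$, which is the real content of the statement and cannot be extracted from a Poincar\'e-series comparison. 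The paper establishes it by showing (a) that the image of $H(\GCex_{(g)})\to H(\icgg(1))[1]$ lies in the center of the graded Lie algebra $H(\icgg(1))\cong\hat{\mathfrak w}_g^{fr}$, (b) that for $g\geq 2$ this center is one-dimensional, spanned by the Euler-class element $c$ (Asada--Kaneko), and (c) that the connecting homomorphism is nonzero on $c$, verified by pushing the image graph (one edge with two $\Delta_1$-decorations) into $\mathrm{Der}(\FreeLie(\bar H(W_{g,1})))$. This centrality mechanism is also the true source of the hypothesis $g\geq 2$: for $g=1$ the center is larger and the sequence genuinely fails to be short exact, as the paper records. Your construction of graphical representatives for the generators of $\pi^\Q STW_g$ is consistent with the paper's description of the connecting homomorphism, but without the centrality and non-vanishing arguments the isomorphism onto the kernel is not established.
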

We note that the weight $1$ part of the cohomology of all our graph complexes may be explicitly computed for all genera $g$, see section \ref{sec:weight 1 comparison} below.

As an application we show how the conjecture stated in the introduction of \cite{AKKN} follows from \cite{Matteo}. For this, recall the result by the third author which shows that Drinfeld's Grothendieck-Teichm\"uller Lie algebra $\grt_1$ is isomorphic to the zeroth cohomology of Kontsevich's graph complex $H^0(\GC)$ \cite[Theorem 1.1]{Willgrt}. Moreover, the one-loop approximation of the latter is identified in \cite{severa} with Alekseev and Torossian's Kashiwara-Vergne Lie algebra $\krv$ \cite{AlekseevTorossian}. Furthermore, there is an injective Lie algebra morphism
\begin{equation}\label{thm:grtkrv}
\grt_1 = H^0(\GC) \to \krv.
\end{equation}
The genus one analogues of both $\grt_1$ and $\krv$ have been introduced in the literature. Enriquez defines the elliptic Grothendieck-Teichm\"uller Lie algebra as a semi-direct product
\[
\grt_1^{ell}=\grt_1 \ltimes \mathfrak r_{ell}
\]
where $\mathfrak r_{ell}$ is an explicitly defined Lie subalgebra of the Lie algebra of derivations of the free Lie algebra on two generators \cite{Enriquez}. By \cite[Corollary 3]{Matteo} $\mathfrak r_{ell}$ is moreover isomorphic to $H^0(\GC_{(1)})$. On the other hand, an elliptic variant of $\krv$ was introduced in \cite{AKKN0,AKKN} motivated by string topology of surfaces, and in \cite{SchnepsRaphael} in mould theory. It is also naturally described as a Lie subalgebra of derivations of the free Lie algebra on two generators and is denoted $\krv_{1,1}$. Theorem \ref{thm:grtkrvell} yields the elliptic version of \eqref{thm:grtkrv} conjectured to exist in both \cite{AKKN, SchnepsRaphael}.

\begin{thm}\label{thm:grtkrvell}
The assignment $(\phi,u)\mapsto u$ defines a Lie algebra map $\grt_1^{ell}\rightarrow \krv_{1,1}$.
\end{thm}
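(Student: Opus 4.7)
The plan is to realize the desired map through graph-complex interpretations of both sides.

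First I would set up a graph-theoretic description of the target $\krv_{1,1}$, in analogy with \cite{severa}, which identifies $\krv$ with the degree-zero cohomology of the one-loop approximation of Kontsevich's graph complex $\GC$. I would seek to identify $\krv_{1,1}$ with $H^0$ of a one-loop subquotient of $\GC_{(1),1}^\tp$, where tadpoles play the role of the distinguished ``loop'' and the two $H_m(W_{1,1})$-decorations correspond to the two generators of the free Lie algebra on which $\krv_{1,1}$ acts by derivations.

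Second, I would construct the map on cohomology. Starting from $\mathfrak r_{ell}\cong H^0(\GC_{(1)})$ from \cite{Matteo}, I would compose the natural inclusion $\GC_{(1)}\hookrightarrow \GCex_{(1)}$, a lift to $H^0(\GC_{(1),1})$ obtained from the short exact sequence of Theorem~\ref{thm:GC comparison}(ii) (together with an analogue or direct extension to $g=1$), the inverse of the isomorphism of Theorem~\ref{thm:GC comparison}(i) to reach $H^0(\GC_{(1),1}^\tp)$, and finally the projection to the one-loop subquotient representing $\krv_{1,1}$.

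Third, to extend this to the full semidirect product $\grt_1^{ell}=\grt_1\ltimes\mathfrak r_{ell}$, I would verify that $\phi\cdot u=0$ in $\krv_{1,1}$ for all $\phi\in\grt_1$ and $u\in\mathfrak r_{ell}$, which is exactly the condition required for the assignment $(\phi,u)\mapsto u$ to respect the semidirect-product bracket. This reduces to a graph-complex statement: the $\grt_1$-action on $\mathfrak r_{ell}$ is induced by the natural map $\GC\to\GC_{(1)}$ together with graph-complex brackets, and the resulting classes should become trivial in the one-loop subquotient of $\GC_{(1),1}^\tp$ realizing $\krv_{1,1}$. The remaining bracket identity $[u_1,u_2]\mapsto[u_1,u_2]$ is automatic from the fact that the constructed map arises from a morphism of dg Lie algebras.

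The main obstacle will be the graph-theoretic realization of $\krv_{1,1}$ and the verification of the vanishing $\phi\cdot u=0$, which is really the crux of the statement. Both steps require careful bookkeeping of tadpoles, of the one-loop projection, and of the dictionary between $\mathfrak r_{ell}\subset\Der(\FreeLie(a,b))$ and the graph-complex description. Additionally, the $g=1$ case of Theorem~\ref{thm:GC comparison}(ii) lies outside the stated range $g\geq 2$ and must be treated directly, which should be feasible since $\osp_1^{nil}$ admits an explicit low-dimensional description.
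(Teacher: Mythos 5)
There are two genuine problems with your plan. First, your treatment of the $\grt_1$-part rests on a misreading of the map. In Enriquez's definition an element of $\grt_1^{ell}$ is a pair $(\phi,u)$ with $u\in\Der(\wgeis)$ the \emph{full} derivation component, not the $\mathfrak r_{ell}$-component of a semidirect product splitting; the assignment $(\phi,u)\mapsto u$ is therefore a Lie algebra map into $\Der(\wgeis)$ essentially by the definition of the bracket on $\grt_1^{ell}$, and the only content is that the image lies in the Lie subalgebra $\krv_{1,1}$. In particular $\grt_1$ is \emph{not} sent to zero: via Enriquez's section of $\grt_1^{ell}\to\grt_1$ it is sent to nontrivial derivations $u_\phi$, and one must invoke \cite[Theorem 1.6]{AKKN} to know $u_\phi\in\krv_{1,1}$. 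Your proposed condition ``$\phi\cdot u=0$ in $\krv_{1,1}$'' is neither required nor true: $\phi\cdot u=[u_\phi,u]$ lies in $\mathfrak r_{ell}$, which embeds into $\Der(\wgeis)$, and the $\grt_1$-action on $\mathfrak r_{ell}$ is nontrivial, so this ``crux'' of your argument is a false statement that cannot be verified.

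Second, you outsource the hardest step to an unestablished identification of $\krv_{1,1}$ with $H^0$ of a one-loop subquotient of $\GC_{(1),1}^\tp$. No such model is proved (or needed) in the paper; the actual argument uses the explicit morphism \eqref{eq:GCg1 to Freelie} $H^0(\GC_{(1),1})\to\Der(\FreeLie(\bar H(W_{1,1})))$ given by projecting onto trivalent trees, and then verifies the two defining equations of $\widehat\krv_{1,1}$ directly: the first from the extension of the action to $\wg_g$, and the divergence condition $\div(u)=0$ by identifying the differential $d_1\colon E_1^{0,0}\to E_1^{1,0}$ of the loop-order spectral sequence with the divergence cocycle (Proposition \ref{prop:gc to kv}). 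This divergence computation is the technical heart of the theorem and is entirely absent from your outline; building a graph model of $\krv_{1,1}$ would require it anyway, plus more. Two smaller points: \cite[Corollary 109]{Matteo} gives $H^0(\GC_{(1)})\cong\mathfrak r_{ell}/(\text{central element})$, not $\mathfrak r_{ell}\cong H^0(\GC_{(1)})$ --- the correct identification is $\mathfrak r_{ell}\cong H^0(\GC_{(1),1})$, obtained by combining this with Proposition \ref{prop:long exact splits} (which is proved for $g=1$, so your worry there is addressed); and the detour through $\GC_{(1),1}^\tp$ via Theorem \ref{thm:GC comparison}(i) is unnecessary and in any case that isomorphism fails in weight $1$.
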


The second main topic of this paper is the study of the cohomology of our three dg Lie algebras above for large $g$. To this end we have the following vanishing result.

\begin{thm}\label{thm:main cohom GC vanishing}$ $
\begin{enumerate}[(i)]
  \item For all $g\geq 0$, $W\geq 1$ and $k<(1-m)W$ we have that 
  \[
    \gr^W H^{k}(\GC_{(g),1}^\tp)
    =
    \gr^W H^{k}(\GC_{(g),1})
    =0.
  \] 
  \item (partially contained in \cite[Theorem 82]{Matteo})
  For all $g\geq 2$, $W\geq 1$ and $k<(1-m)W$ 
  \[
    \gr^W H^{k}(\GCex_{(g)}) = 0.
  \]
  \item For all $W\geq 1$, $g\geq W+2$ and $k>(1-m)W$ we have that
  \[
    \gr^W H^{k}(\GC_{(g),1}^\tp)
    =
    \gr^W H^{k}(\GC_{(g),1})
    =
    \gr^W H^{k}(\GCex_{(g)}) = 0.
  \] 
\end{enumerate}
\end{thm}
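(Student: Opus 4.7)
The plan is to treat the three bounds separately, with part (iii) being the substantial content.

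Part (i) should reduce to a purely combinatorial chain-level bound: the strategy is to verify that no nonzero chain in $\gr^W \GC^\tp_{(g),1}$ sits in cohomological degree below $(1-m)W$, after which cohomology vanishes trivially in that range. Extracting the degree formula for decorated graphs from the requirement that the differentials $\delta_{split}$ and $\delta_{glue}$ each raise degree by one, and combining with the weight equation $W = 2(e-v) + D$ and the trivalence inequality $2e + D \geq 3v$, one obtains the desired lower bound $\deg(\Gamma) \geq (1-m)W$. The analogous statement for the quotient $\GC_{(g),1}$ is then immediate. Part (ii) for $\GCex_{(g)}$ is flagged as partially contained in \cite[Theorem 82]{Matteo}; the remaining portion can be obtained by combining that reference with the short exact sequence of Theorem \ref{thm:GC comparison}(ii), after checking that $\pi^\Q S T W_g$ is concentrated in the appropriate range of degrees.

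Part (iii) is the stability statement. My plan is to filter $\gr^W \GC^\tp_{(g),1}$ by the number of decorations $D$, which is bounded above by $W$ thanks to trivalence. Since $\delta_{split}$ preserves $D$ while $\delta_{glue}$ strictly decreases it, the filtration is respected, and the differential on the associated graded reduces to $\delta_{split}$ alone. In the stable range $g \geq W + 2 \geq D + 2$, classical invariant theory for $\OSp_g$ applies to the decoration tensor powers $\bar H_\bullet(W_{g,1})^{\otimes D}$: the isotypic decomposition becomes independent of $g$, yielding a $g$-independent description of the $E_1$-page of this spectral sequence.

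The main obstacle will then be showing that this stable $E_1$ is concentrated in the lowest degree $(1-m)W$. I expect to handle this by constructing an explicit contracting homotopy on the subcomplex of $E_1$ in degrees strictly above $(1-m)W$: heuristically, such a chain must contain a vertex of valence strictly greater than three, and one should be able to resolve this excess valence by pairing two incident edges against a fresh symplectic pair of homology classes, available precisely when $g \geq W + 2$. The conclusions for $\GC_{(g),1}$ and $\GCex_{(g)}$ would then follow from the analogous analysis together with Theorem \ref{thm:GC comparison}(ii) and the degree bounds on $\pi^\Q S T W_g$.
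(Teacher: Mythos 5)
Your proposal has genuine gaps in both part (i) and part (iii).

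For part (i), the claimed chain-level bound is false: there \emph{are} nonzero chains in $\gr^W\GC_{(g),1}^\tp$ of degree strictly below $(1-m)W$. For a connected graph with $e$ edges, $v$ vertices and total decoration degree $mD$ one computes
\[
k-(1-m)W \;=\; 1+2v-e-D,
\]
independently of $m$, and the trivalence inequality $2e+D\geq 3v$ bounds $v$ from \emph{above} in terms of $e$ and $D$, which is the wrong direction to make this quantity nonnegative. Concretely, a single vertex carrying $D\geq 4$ decorations has $k=(1-m)W+3-D<(1-m)W$. So part (i) is a genuine statement about cohomology, not about the underlying graded vector space, and the paper proves it by filtering by loop order, identifying the associated graded $(\GC_{(g),1},\delta_{split})$ with (anti)symmetrized hairy graph complexes $\HGC_{0,2}^{[h]}$, and invoking the Chan--Galatius--Payne bound (ultimately Harer's computation of the virtual cohomological dimension of $\MM_{g,h}$) that their cohomology is concentrated in degrees $\geq -1$. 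This nontrivial input cannot be replaced by a degree count. (Your part (ii) is fine modulo part (i): the paper likewise deduces it from the splitting of Theorem \ref{thm:GC comparison}(ii).)

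For part (iii), filtering by the number of decorations so that the associated graded differential is $\delta_{split}$ puts you on the wrong foot: the $E_1$-page is then again hairy graph cohomology, which is spread over many degrees (this is exactly what makes part (i) hard), so no contracting homotopy on the part above degree $(1-m)W$ can exist there. Your heuristic is also backwards --- within fixed weight, loop order and decoration number, the \emph{top}-degree classes of $\delta_{split}$ are represented by trivalent graphs, not by graphs with excess valence. The paper instead works with the graded dual $\G_{(g),1}^\tp$, filters by the number of \emph{vertices} so that the associated graded differential is $d_{cut}$ (the dual of $\delta_{glue}$), uses Lemma \ref{lem:complex inv} together with the bound $D\leq W+2$ of Lemma \ref{lem:upper bound D} and classical invariant theory (valid for $g\geq W+2$) to rewrite the invariant complex as a complex of graphs with solid and dashed edges, and then applies Lemma \ref{lem:CGamma} (the two-colored complex $C_\Gamma$ with connected solid subgraph has cohomology in top degree only), which forces the cohomology onto spanning-tree representatives and yields the degree bound. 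That combinatorial lemma is the essential ingredient missing from your plan.
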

In other words, for $g\geq W+2$, the cohomology of the weight $W$ part of all three graph complexes becomes concentrated in degree $-(m-1)W$.
From this, one also obtains that all three dg Lie algebras become formal in the limit $g\to\infty$.

A similar result can be obtained for the Chevalley-Eilenberg cohomology $H_{CE}(-)$ of all three dg Lie algebras.
More precisely, we define the Chevalley-Eilenberg complex of any of the three Lie algebras above (say $\fg$ collectively) as the cobar construction of the graded dual 
\[
  C_{CE}(\fg) = \Bar^c \fg^c.
\]
Then $C_{CE}(\fg)$ is a differential graded commutative algebra. It is equipped with an additional grading by weight, inherited from $\fg$.
We denote by $\gr^W H_{CE}(-)$ the weight $W$ piece of the Chevalley-Eilenberg cohomology.

\begin{thm}\label{thm:main CE all}
For any $g\geq 3W$ and $k\neq m W$ we have that 
\[
  \gr^W H_{CE}^k(\GC_{(g),1}) = \gr^W H_{CE}^k(\GC_{(g),1}) = \gr^W H_{CE}^k(\GC_{(g),1}) = 0.
\]
\end{thm}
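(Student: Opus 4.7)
The plan is to deduce Theorem \ref{thm:main CE all} from Theorem \ref{thm:main cohom GC vanishing} together with a Koszul-type statement for the stable cohomology Lie algebras. I will proceed in three steps.

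First, I would exploit formality. By Theorem \ref{thm:main cohom GC vanishing}, for $g\geq W+2$ and any of the three Lie algebras $\fg$, the weight $W$ piece of $H(\fg)$ is concentrated in a single cohomological degree $-(m-1)W$. A dg Lie algebra whose cohomology in each weight is concentrated in one degree is formal in the weight-filtered sense, and hence
\[
  \gr^W H_{CE}(\fg) \;\cong\; \gr^W H_{CE}\bigl(H^\bullet(\fg)\bigr),
\]
where the right-hand side is the CE cohomology of the \emph{graded} Lie algebra $H^\bullet(\fg)$. Note that passing from $\fg$ to $H^\bullet(\fg)$ may require a larger stability range since all weights $W'\leq W$ appearing in sub-words must simultaneously be in the stable range.

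Second, I would compute the bidegrees available. An element of $H^\bullet(\fg)^c[-1]$ in weight $W'$ has cohomological degree $(m-1)W'+1$. Hence a length-$n$ generator of $\Bar^c(H^\bullet(\fg)^c)$ in total weight $W=\sum W_i$ sits in cohomological degree $(m-1)W+n$. This means $\gr^W C_{CE}^k(H^\bullet(\fg))$ is supported in
\[
  (m-1)W + 1 \;\leq\; k \;\leq\; mW,
\]
with the top degree $k=mW$ realized exactly by words of length $n=W$ in which every letter has weight $1$. The statement to prove thus amounts to showing that the CE cohomology is concentrated on these "diagonal" words, i.e.\ that the stable Lie algebra $H^\bullet(\fg)$ is Koszul in the relevant sense.

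Third — and this is the main obstacle — I would establish this Koszulness. The strategy would be to identify the stable cohomology Lie algebra explicitly, using the weight-$1$ computation of Section \ref{sec:weight 1 comparison} to pin down the generators, and then match $H^\bullet(\fg)$ with a known quadratic-Koszul presentation (e.g. via the connection with the relative Malcev completion of the mapping class group alluded to in the introduction, or via an explicit Lie-algebra-of-derivations model in the $\GCex$ case). Once such a Koszul presentation is in hand, its CE cohomology is concentrated in the maximal word length $n=W$ in each weight $W$, giving precisely the vanishing $\gr^W H_{CE}^k = 0$ for $k\neq mW$. The tightened stability bound $g\geq 3W$ (rather than $g\geq W+2$) is what one expects to need here: it must accommodate all weight splittings $W=W_1+\cdots+W_n$ (each $W_i\geq 1$, so $n\leq W$) occurring in the CE complex, so that formality and the Koszul-duality identification hold coherently on every multilinear piece simultaneously. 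The bulk of the work is thus in verifying the Koszul property of the stable cohomology; abstract formality alone is not sufficient.
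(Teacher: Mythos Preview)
Your proposal reverses the logical flow of the paper. In the paper, Theorem \ref{thm:main CE all} is proved \emph{directly} and \emph{independently} of any Koszulness statement; Koszulness (and in particular Corollary \ref{cor:Hain}, answering Hain's question) is then \emph{deduced} from Theorems \ref{thm:main cohom GC vanishing} and \ref{thm:main CE all} together via Proposition \ref{prop:cohom conc follows koszul 2}. Your step 3 thus assumes precisely what the paper sets out to establish: there is no pre-existing Koszul presentation of the stable cohomology Lie algebra to appeal to. The link to Hain's $\mathfrak u_g$ does not help, since the Koszulness of $\mathfrak u_g$ was an open question before this paper.

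The paper's actual proof is a direct computation using classical invariant theory. One first bounds the number of decorations in a weight-$W$ graph by $3W$ (Lemmas \ref{lem:max deco} and \ref{lem:max deco 2}), so that the $\OSp_g$-representation is of order $\leq 3W$; this is the source of the bound $g\geq 3W$. By Lemma \ref{lem:complex inv}, it then suffices to show that the invariant complexes $(\gr^W C_{CE}\otimes V_g^{\otimes M})^{\OSp_g}$ have cohomology concentrated in degree $mW$ for $M\leq 3W$. Theorem \ref{thm:inv theory} identifies these invariants, for $g$ in the stable range, with complexes of two-colored (solid/dashed) graphs without decorations. The differential $d_{cut}$ then admits an explicit homotopy (replacing a dashed edge by a solid one), forcing the cohomology into $E$-number zero, i.e.\ degree $mW$. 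The case of $\GCex_{(g)}$ requires additional care with crossed vertices and $\omega$-decorations (section \ref{sec:main CE all proof 2}), but follows the same pattern. No formality or Koszulness enters the argument.
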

In other words, for large $g$ the cohomology of the weight $W$ piece becomes concentrated in cohomological degree $mW$.
It follows from Theorem \ref{thm:main CE all} in particular that in the limit $g\to \infty$ the Chevalley-Eilenberg complex becomes formal as a differential graded commutative algebra.
Furthermore, it follows from Theorems \ref{thm:main cohom GC vanishing} and \ref{thm:main CE all} together that as $g\to \infty$ the cohomology and Chevalley-Eilenberg cohomology of our dg Lie algebras form Koszul pairs. 
We refer to sections \ref{sec:CE} and \ref{sec:Koszul} below for more precise formulations of these statements as well as refined degree bounds.

The final question is then to compute the non-vanishing cohomologies for high genera. For this introduction we shall restrict ourselves to the case $m=1$ and the complex $\GCex_{(g)}$ for brevity. Analogous results for the other cases can be found in section \ref{sec:Koszul} below.
We shall give an explicit presentation of the stable cohomology of the Lie algebra. The answer is closely related to Hain's presentation of the Malcev completion of the Torelli group \cite{Hain, Hain2}.

Note that for $m=1$ the cohomology of $\GCex_{(g)}$ becomes concentrated in degree $0$ as $g\to \infty$, and furthermore $\OSp_g=\Sp(2g)$.
Fix a system of fundamental weights $\lambda_1,\dots,\lambda_g$ of $\Sp(2g)$. Denote by $V(\lambda)$ the irreducibel $\Sp(2g)$-representation of heighest weight $\lambda$. In particular $V(\lambda_1)\cong H^1(W_g)\cong H^1(W_{g,1})$. 
We note that one has the following decomposition into irreducible $\Sp(2g)$-representations, for $g\geq 3$. 
\[
\wedge^3(V(\lambda_1))
\cong
V(\lambda_1) \oplus V(\lambda_3)
\]
The projection to $V(\lambda_1)$ is obtained by the contraction of two of the three factors $V(\lambda_1)$ with the given bilinear form.
We may easily identify 
\[
\gr^1 H^0(\GCex_{(g)}) \cong V(\lambda_3),
\]
by identifying $\wedge^3 V(\lambda_1)$ with the space of graphs with one vertex and three decorations.
\[
  \begin{tikzpicture}
    \node[int,label=90:{$\alpha\beta \gamma$}] {};
  \end{tikzpicture}
  \quad \text{with $\alpha,\beta,\gamma\in H_m(W_{g})$}
\]
We may furthermore decompose, for $g\geq 6$,
\[
\wedge^2 V(\lambda_3)
\cong 
V(0) \oplus V(\lambda_2) \oplus V(\lambda_4) \oplus V(\lambda_6) \oplus V(2\lambda_2)\oplus V(\lambda_4+\lambda_2).
\]

Let $R_{(g)}$ be the $\Sp(2g)$-invariant complement of $V(2\lambda_2)$ in $\wedge^2 V(\lambda_3)$.
Denote by 
\[
  \ft_{(g)} = \FreeLie(V(\lambda_3) ) / R_{(g)}
\]
the Lie algebra generated by $V(\lambda_3)$ with with the quadratic relations $R_{(g)}\subset \wedge^2 V(\lambda_3)$.
We equip $\ft_{(g)}$ with a grading by assigning the generators degree $1$.

\begin{thm}\label{thm:main t}
  The inclusion $V(\lambda_3)\cong \gr^1 H^0(\GCex_{(g)})\subset H^0(\GCex_{(g)})$ extends to a Lie algebra homomorphism
  \[
    \ft_{(g)} \to H^0(\GCex_{(g)}).
  \]
  This Lie-algebra homomorphism induces an isomorphism on the graded components 
  \[
    \gr^W\ft_{(g)} \xrightarrow{\cong}
    \gr^W H^0(\GCex_{(g)}).
  \]
  as soon as $g\geq 3W\geq 6$.
\end{thm}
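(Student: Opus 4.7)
The plan is to read off a presentation of $H^0(\GCex_{(g)})$ from the Koszul structure established earlier in the paper, and then to verify via a low-weight computation that this presentation matches that of $\ft_{(g)}$. By Theorem \ref{thm:main cohom GC vanishing}(ii) and Theorem \ref{thm:main CE all}, for $m=1$ and $g\geq 3W$ the weight-$W$ part of $H^\bullet(\GCex_{(g)})$ is concentrated in cohomological degree $0$ while the weight-$W$ part of $H_{CE}^\bullet(\GCex_{(g)})$ is concentrated in cohomological degree $W$. By the Koszul duality developed in Section \ref{sec:Koszul}, this forces $\fg := H^0(\GCex_{(g)})$ to be a quadratic Lie algebra in the stable range: its ideal of relations inside $\FreeLie(\gr^1\fg)$ is generated by its weight-two part.

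Next, the inclusion $V(\lambda_3)\cong \gr^1\fg \hookrightarrow \fg$ extends uniquely to a Lie algebra map $\FreeLie(V(\lambda_3))\to \fg$, which is surjective by the previous paragraph. For this map to descend to $\ft_{(g)}$ and to be injective on each weight-$W$ piece, one must show that the actual space of quadratic relations $R\subset \wedge^2 V(\lambda_3)$ of $\fg$ coincides with the space $R_{(g)}$ appearing in the definition of $\ft_{(g)}$. Since $R_{(g)}$ is defined as the $\Sp(2g)$-invariant complement of $V(2\lambda_2)$, the whole theorem reduces to the single identification
\[
\gr^2 \fg \;\cong\; V(2\lambda_2)
\]
as $\Sp(2g)$-representations, for $g$ sufficiently large. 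Once this is known, the Lie map $\ft_{(g)}\to\fg$ is simultaneously well-defined, surjective, and injective in the stated range.

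This identification is carried out directly on the graph complex. The weight-two, cohomological-degree-zero part of $\GCex_{(g)}$ is spanned by a short finite list of isomorphism classes of small decorated graphs with parameters $(v,e,D)$ satisfying $2(e-v)+D=2$ together with the at least trivalent condition; the pieces of the differential $\delta_{split}+\delta_{glue}+\delta_Z$ act through concrete combinatorial rules. Decomposing these graph spaces into $\Sp(2g)$-isotypic components using the classical invariant theory of the symplectic group, and then computing the cohomology of the resulting complex of representations, isolates the single surviving summand $V(2\lambda_2)$ and shows that the remaining summands $V(0)\oplus V(\lambda_2)\oplus V(\lambda_4)\oplus V(\lambda_6)\oplus V(\lambda_4+\lambda_2)$ of $\wedge^2 V(\lambda_3)$ are killed by the differential.

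The main obstacle is this third step, the explicit $\Sp(2g)$-equivariant calculation of $\gr^2\fg$. While the enumeration of contributing graphs is short, tracking the signs, the Koszul prefactors, and the interplay of the three pieces of the differential through the invariant-theoretic decomposition requires careful bookkeeping. Moreover one must take $g$ large enough for the stable multiplicity-one decomposition of $\wedge^2 V(\lambda_3)$ into the six summands above to be in effect, which accounts for the precise threshold $g\geq 3W\geq 6$ appearing in the statement.
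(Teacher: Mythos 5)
Your proposal follows essentially the same route as the paper: the paper deduces the theorem from Theorem \ref{thm:presentations} (itself an application of the Koszul machinery of Proposition \ref{prop:cohom conc follows koszul 2} to the concentration results of Theorems \ref{thm:main cohom GC vanishing} and \ref{thm:main CE all}), combined with the explicit identification $\gr^2 H(\GCex_{(g)})\cong V(2\lambda_2)$ carried out graph-by-graph with invariant theory in the subsection on generators and relations. Your reduction of the whole statement to that single weight-two computation (valid because $\wedge^2 V(\lambda_3)$ is multiplicity-free for $g\geq 6$) is exactly the paper's argument, with the computation itself correctly identified as the remaining work.
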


To study the Chevalley-Eilenberg cohomology let us next define a graded commutative algebra $A_{(g)}$ as the Koszul dual commutative algebra of the Lie algebra $\ft_{(g)}$. Concretely, one has a presentation 
\[
  A_{(g)} = S( V(\lambda_3)[-1]) / (R_{(g)})^\perp,
\]
where $V(\lambda_3)\cong V(\lambda_3)^*$ is identified with the dual space of the space of generators of $\ft_{(g)}$ and $(R_{(g)})^\perp$ is the annihilator of $R_{(g)}$ in $S^2( V(\lambda_3)[-1])\cong \wedge^2 V(\lambda_3)^*[-2]$. Note also that $A_{(g)}$ inherits an additional grading by weight.

\begin{thm}\label{thm:main HCE}
One has a zigzag of morphism of dg commutative algebras 
\[
  C_{CE}(\GCex_{(g)}) \leftarrow \bullet \to A_{(g)}
\]
that induces an isomorphism in cohomology
\[
 \gr^W H_{CE}(\GCex_{(g)})
 \cong
 \gr^W (A_{(g)})
\]
as soon as $g\geq 3W\geq 6$.
\end{thm}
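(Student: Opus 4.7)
The plan is to combine the three preceding theorems to establish stable formality of $\GCex_{(g)}$ as a dg Lie algebra together with stable Koszulness of the quadratic Lie algebra $\ft_{(g)}$; the zigzag and the identification with $A_{(g)}$ then follow by Koszul duality.

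\textbf{Step 1 (stable formality of $\GCex_{(g)}$).} For $m=1$, Theorem \ref{thm:main cohom GC vanishing} shows that $\gr^W H^*(\GCex_{(g)})$ concentrates in cohomological degree $0$ whenever $g \geq W+2$. I would pass to a weight-compatible minimal $L_\infty$-model $\mathcal{M}_{(g)}$, whose generators in weight $W$ all sit in degree $0$. An $n$-ary bracket $\ell_n$ preserves weight and has homological degree $2-n$, so on weight-$W$ inputs its output would live in degree $2-n$, a degree absent from $\mathcal{M}_{(g)}$ for $n\geq 3$. Hence $\ell_n = 0$ for $n \geq 3$, the minimal model is a genuine graded Lie algebra, and one obtains a zigzag of dg Lie algebra quasi-isomorphisms $\GCex_{(g)} \simeq \mathcal{M}_{(g)} \cong H^0(\GCex_{(g)})$ in the range $g\geq 3W\geq 6$.

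\textbf{Step 2 (stable Koszulness of $\ft_{(g)}$).} By Theorem \ref{thm:main t}, $H^0(\GCex_{(g)}) \cong \ft_{(g)}$ as graded Lie algebras for $g\geq 3W\geq 6$, so Step 1 yields a zigzag of dg commutative algebra quasi-isomorphisms between $C_{CE}(\GCex_{(g)})$ and $C_{CE}(\ft_{(g)})$ on $\gr^W$. Theorem \ref{thm:main CE all} then tells us that $\gr^W H^*_{CE}(\ft_{(g)})$ is concentrated in cohomological degree $mW = W$; for a quadratic graded Lie algebra this diagonal concentration is equivalent to Koszulness, and the canonical Koszul morphism of dg commutative algebras
\[
\iota : A_{(g)} \longrightarrow C_{CE}(\ft_{(g)})
\]
(with $A_{(g)}$ carrying zero differential) becomes a quasi-isomorphism on $\gr^W$ in that range.

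\textbf{Step 3 (assembling the zigzag).} Choosing a cofibrant dg Lie algebra $\widetilde{\mathfrak{L}}$ with weight-respecting quasi-isomorphisms $\widetilde{\mathfrak{L}} \xrightarrow{\simeq} \GCex_{(g)}$ and $\widetilde{\mathfrak{L}} \xrightarrow{\simeq} \ft_{(g)}$ (provided by Steps 1--2) and applying $C_{CE}$, I would obtain
\[
C_{CE}(\GCex_{(g)}) \longrightarrow C_{CE}(\widetilde{\mathfrak{L}}) \longleftarrow C_{CE}(\ft_{(g)}) \xleftarrow{\iota} A_{(g)},
\]
each arrow inducing an isomorphism on $\gr^W$ for $g \geq 3W \geq 6$. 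A standard factorization in dg commutative algebras compresses this string of quasi-isomorphisms into the two-term zigzag asserted in the statement.

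\textbf{Main obstacle.} The principal difficulty is Step 1: stable formality does not follow from cohomological concentration alone in general, so the degree-weight accounting in the minimal $L_\infty$-model must be carried out carefully, ensuring the model's bigrading is compatible with the $\OSp_g$-action and that the stable ranges of Theorems \ref{thm:main cohom GC vanishing}, \ref{thm:main CE all}, and \ref{thm:main t} remain coherent through the Chevalley-Eilenberg functor. Once these technicalities are controlled, the Koszul duality identification with $A_{(g)}$ is classical.
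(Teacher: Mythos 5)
Your overall strategy---deduce stable formality of the Lie algebra from the degree concentration of Theorem \ref{thm:main cohom GC vanishing}, identify the stable $H^0$ with $\ft_{(g)}$ via Theorem \ref{thm:main t}, and then use the concentration of Theorem \ref{thm:main CE all} to obtain Koszulness and hence the identification of $H_{CE}$ with the quadratic dual $A_{(g)}$---is the same as the paper's, and it works. The implementation is genuinely different, though: the paper never passes to a minimal $L_\infty$-model. It works directly on the Chevalley--Eilenberg side with the truncation functors $\tru^{[\alpha]}$ and $H^{[\alpha]}$ of the notation section, so that the asserted zigzag is written down explicitly as $C_{CE}(\GCex_{(g)}) \leftarrow \tru^{[m]}C_{CE}(\GCex_{(g)}) \to H^{[m]}(C_{CE}(\GCex_{(g)})) \leftarrow A_{(g)}$, and all the Koszul-duality bookkeeping is done once, via bar--cobar resolutions, in Proposition \ref{prop:cohom conc follows koszul 2}; Theorem \ref{thm:presentations} and hence Theorem \ref{thm:main HCE} are then immediate. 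The truncation route has the advantage of working uniformly ``up to weight $W_0$'' without having to control what a transferred $L_\infty$-structure does outside the stable range; your route is conceptually transparent but front-loads the homotopy-transfer technicalities you flag as the main obstacle.

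Two points in your write-up need repair, both fixable. First, the vanishing of $\ell_n$ for $n\geq 3$ only follows when the \emph{output} weight $\sum W_i$ still lies in the range where $\gr^W H(\GCex_{(g)})$ is concentrated in degree $0$; for $\sum W_i > g/3$ the transferred brackets need not vanish. This does not affect $\gr^W C_{CE}$ for $W\leq g/3$, but you must truncate by weight before arguing---which is exactly what $\tru^{[\alpha]}$ accomplishes. Second, the arrow $\iota\colon A_{(g)}\to C_{CE}(\ft_{(g)})$ does not exist as a strict map of dg commutative algebras: the elements of $R_{(g)}^\perp$ are exact but nonzero in $C_{CE}(\ft_{(g)})$, so the inclusion of generators does not descend to the quotient $A_{(g)}$. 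The natural map is the surjection $C_{CE}(\ft_{(g)})\twoheadrightarrow A_{(g)}$ onto the weight-diagonal quotient (equivalently, in the paper's formulation, $A_{(g)}$ maps into the cohomology $H^{[m]}$, not into the cochain complex); with the arrow reversed your Step 3 still compresses to the asserted two-term zigzag. Finally, note that applying $C_{CE}=\Bar^c(-)^c$ to quasi-isomorphisms is legitimate here only because of the weight grading and degreewise finite-dimensionality, as the paper points out.
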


The Lie and commutative algebras above appear in various contexts and places in the literature. For example, in \cite{Matteo} it is shown that in the case $m=1$ one has that $\sp_g \ltimes H^0(\GCex_{(g)})$ is identified with a genus-$g$-version of the Grothendieck-Teichmüller Lie algebra. 
The Chevalley-Eilenberg cohomology of $\GC_{(g)}^\tp$ appears as the cohomology of the space $X_1(g)$ of \cite[section 4.3]{KRW} (for large $g$), that captures the major part of the cohomology of the Torelli subgroup of the boundary preserving framed diffeomorphism group of $W_{g,1}$.
Finally the Lie algebra $\ft_{(g)}$ (in our notation) can be identified with a relative Malcev completion $\mathfrak u_g$ (in Hain's notation) of the mapping class group (for $g\geq 6$) as computed by Hain in \cite[Theorem 7.7]{Hain2}.
One may then reformulate a less precise version our results above as follows.
\begin{cor}\label{cor:Hain}
The Malcev completion $\mathfrak u_g\cong \ft_{(g)}$ of the genus $g$ mapping class group relative to the symplectic group as computed by R. Hain in \cite[Theorem 7.7]{Hain2} is Koszul in the limit $g\to \infty$.
\end{cor}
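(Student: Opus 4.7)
The plan is to combine Theorems \ref{thm:main t}, \ref{thm:main cohom GC vanishing}(ii), \ref{thm:main CE all} and \ref{thm:main HCE}, specialized to $m=1$. In this case the vanishing line $k=(1-m)W$ of Theorem \ref{thm:main cohom GC vanishing} sits at $k=0$, and the concentration line $k=mW$ of Theorem \ref{thm:main CE all} sits at $k=W$, placing both types of cohomology on the ``diagonals'' characteristic of Koszulness.

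I would first recall the formulation of Koszulness to be used: a positively weight-graded quadratic Lie algebra generated in weight $1$ is Koszul if and only if its Chevalley-Eilenberg cohomology is concentrated in bidegrees $(W,W)$ (cohomological degree equal to weight), in which case it automatically coincides with the quadratic Koszul dual commutative algebra. By construction $A_{(g)}$ is the Koszul dual of $\ft_{(g)}$, so the task reduces to verifying
\[
\gr^W H_{CE}^{k}(\ft_{(g)}) \cong \begin{cases} \gr^W A_{(g)} & \text{if } k=W, \\ 0 & \text{if } k\neq W, \end{cases}
\]
in the stable range $g\geq 3W\geq 6$.

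Next I would transfer the question from $\ft_{(g)}$ to $\GCex_{(g)}$ via formality. By Theorem \ref{thm:main cohom GC vanishing}(ii), for $m=1$ the cohomology $H^{\bullet}(\GCex_{(g)})$ is concentrated in cohomological degree $0$, where by Theorem \ref{thm:main t} it is isomorphic to $\ft_{(g)}$ as a graded Lie algebra. Cohomology concentrated in a single cohomological degree forces the transferred $L_\infty$-structure on $H^{\bullet}(\GCex_{(g)})$ to have all higher brackets $\ell_n$ ($n\geq 3$) vanish for degree reasons, so $\GCex_{(g)}$ is formal as a dg Lie algebra and, in each fixed weight in the stable range, quasi-isomorphic to $\ft_{(g)}$. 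This induces a quasi-isomorphism $C_{CE}(\GCex_{(g)})\simeq C_{CE}(\ft_{(g)})$, and combined with Theorems \ref{thm:main HCE} and \ref{thm:main CE all} it yields precisely the diagonal concentration and the identification with $A_{(g)}$ demanded above.

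The main technical point is managing the stable ranges consistently: each of Theorems \ref{thm:main t}, \ref{thm:main CE all} and \ref{thm:main HCE} requires $g\geq 3W$, so for every fixed weight $W$ one must first pass to $g$ sufficiently large before invoking formality and matching against $A_{(g)}$. Once this bookkeeping is carried out weight by weight, Hain's isomorphism $\mathfrak u_g\cong \ft_{(g)}$ from \cite[Theorem 7.7]{Hain2} transports Koszulness of $\ft_{(g)}$ to the relative Malcev completion $\mathfrak u_g$ in the stable limit $g\to\infty$, which is the assertion of the corollary.
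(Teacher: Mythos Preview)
Your proposal is correct and follows essentially the same route as the paper. The paper's proof is a one-line reference to statement (vi) of Proposition~\ref{prop:cohom conc follows koszul 2} applied to $\fg=\GCex_{(g)}$ with $m=1$; that proposition in turn packages exactly the ingredients you unpack by hand (formality of $\GCex_{(g)}$ from degree concentration, the identification $H^0(\GCex_{(g)})\cong\ft_{(g)}$, and the diagonal concentration of $H_{CE}$ from Theorem~\ref{thm:main CE all}). One small correction: when you assert that $H^\bullet(\GCex_{(g)})$ is concentrated in degree $0$, you cite only Theorem~\ref{thm:main cohom GC vanishing}(ii), which gives vanishing for $k<0$; you also need part (iii) for vanishing in degrees $k>0$ (valid for $g\geq W+2$, which is implied by your stable range $g\geq 3W$).
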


This means more precisely that the Koszulness condition for $\ft_{(g)}$ is satisfied in a range of weights that tends to $\infty$ as $g\to\infty$.


\subsection*{Ackowledgements}
The third author thanks Alexander Kupers and Oscar Randal-Williams for helpful discussions.

We shall also note that results partially overlapping ours have independently been obtained by Kupers and Randal-Williams \cite{KRWnew}.

\section{Notation and preliminaries}
\subsection{Notation}\label{sec:notation}
Unless otherwise stated all vector spaces are taken over the rationals $\Q$. 
We abbreviate the term differential graded by dg. We always use cohomological conventions, that is, differentials have degree $+1$, and we use $\mathbb Z$-gradings unless otherwise stated.
For $V$ a graded vector space we denote by $V[k]$ the same graded vector space with degrees shifted downwards by $k$ units. For example, if $V$ is concentrated in degree $0$, then $V[k]$ is concentrated in degree $-k$.

Almost all objects we consider will be graded objects in dg vector spaces or similar categories. That is, these objects come with two gradings, the cohomological grading and an additional (``weight") grading. 
Concretely, we will consider two incarnations of the additional grading, namely a graded dg vector space $V$ may be written either as a direct sum
\[
  V = \bigoplus_k \gr^k V 
\]
or as a direct product
\[
  V = \prod_k \gr^k V 
\]
of dg sub-vector spaces $\gr^k V \subset V$.
We will call the second type of grading \emph{complete gradings}.
For example, the dual vector space $V^*$ of a dg vector space with additional grading $V=\bigoplus_k \gr^k V$ has a complete grading 
\[
  V^* = \prod_k (\gr^k V)^*.
\]
Often it is hence helpful to consider instead the graded dual dg vector space 
\[
V^c := \bigoplus_k (\gr^k V)^*. 
\]

If $V$ is equipped with further algebraic structure, for example a dg Lie or dg commutative algebra structure, then we will say that the additional grading is compatible with that algebraic structure if the defining algebraic operations (say the Lie bracket or commutative product) restrict to morphisms of dg vector spaces 
\[
\gr^k V \times \gr^l V \to \gr^{k+l} V.
\]

We shall consider several forms of truncations of dg objects with additional grading.
For $\alpha$ an integer, we denote by
\begin{equation}\label{equ:H sq alpha}
  H^{[\alpha]}V = \bigoplus_k \gr^k H^{k\alpha}(V)[-k\alpha]
\end{equation}
the bigraded vector space whose part of additional degree $k$ is concentrated in cohomological degree $\alpha k$ and agrees with the cohomology of $\gr^k V$ there.
We define for a dg vector space the truncation  
\[
 (\tru^{\leq k}V)^j
 =
\begin{cases}
  V^j & \text{for $j<k$} \\
  \{x\in V^k\mid dx=0\} & \text{for $j=k$} \\
  0 & \text{for $j>k$}
\end{cases}.
\]
Here $V^j$ refers to the part of $V$ of cohomological degree $j$.
Furthermore, for a dg vector space $V$ with additional grading and $\alpha$ an integer we set 
\begin{equation}\label{equ:tru sq alpha}
  \tru^{[\alpha]}V = \bigoplus_k \gr^k \tru^{\leq k\alpha}(V).
\end{equation}
We note that if $V$ carries further algebraic structure (say a dg Lie or dg commutative algebra structure), compatible with the additional grading, then $\tru^{[\alpha]}V$ and $H^{[\alpha]}(V)$ inherit that structure.
Furthermore, one always has a zigzag of dg vector spaces with additional grading 
\[
V \leftarrow  \tru^{[\alpha]}V \to H^{[\alpha]}(V)
\]
given by the natural inclusion and projection.
This zigzag also preserves the given algebraic structure on $V$ if present.

\subsection{Representation theory of the symmetric and orthogonal groups}\label{sec:rep theory}
We use the notation 
\[
  \OSp_g=
  \begin{cases}
    \Sp(2g) & \text{for even $m$} \\
    O(g,g) & \text{for odd $m$} 
  \end{cases}
\] 
to either denote the symplectic group, or the orthogonal group associated to the non-degenerate bilinear form of signature $(g,g)$.
We understand $\OSp_g$ as an algebraic group over $\Q$, and we denote by $\OSp_g(\K)$ the $\K$-points for a field $\K\supset \Q$.
We shall recall some standard facts on the representation theory of these groups. We refer to \cite[section 2]{KRWTor} for a beautiful overview that is perfectly suited for the present paper.

Every finite dimensional representation of $\OSp_g$ decomposes into irreducible representations (cf. \cite[Proposition 22.41]{Milne}).
Let $V_g:= \Q^{2g}$ be equipped with the standard symplectic form for $m$ odd or the standard non-degenerate symmetric form of signature $g,g$ for $m$ even. Slightly abusively, we also denote by $V_g$ the defining representation of $\OSp_g$.
Every finite dimensional irreducible representation of $\OSp_g$ is contained in a tensor product $V_g^{\otimes n}$ for some $n$ (see \cite[Theorem 4.14]{Milne}).
We will say that a finite dimensional representation $V$ of $\OSp_g$ is \emph{of order $n$} if it is a subquotient of the representation $V_g^{\otimes n}\otimes U$, for $U$ some finite dimensional vector space, considered as a trivial representation. Equivalently, this means that in the decomposition of $V$ into irreducibles only those irreducibles appearing in $V_g^{\otimes n}$ are present.
We say an algebraic representation $V$ of $\OSp_g$ is \emph{of order $\leq n$} if it is a direct sum of representations of order $0,1,2,\dots, n$.


\begin{lemma}\label{lem:van condition}
Let $V$ be a representation of $\OSp_g$ of order $n$.
Then $V=0$ iff the space of invariants 
\[
   (V\otimes V_g^{\otimes n})^{\OSp_g}=0
\]
vanishes.
\end{lemma}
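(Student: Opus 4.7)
The ``only if'' direction is trivial, since $V=0$ forces $V\otimes V_g^{\otimes n}=0$.

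For the converse, the plan is to reinterpret the space of invariants as a Hom space. Since the defining representation $V_g$ carries the non-degenerate $\OSp_g$-invariant bilinear form that was used to define the group, it is self-dual: $V_g\cong V_g^*$ as $\OSp_g$-representations. Consequently,
\[
  (V\otimes V_g^{\otimes n})^{\OSp_g}
  \;\cong\;
  \Hom_{\OSp_g}(V_g^{\otimes n},V).
\]
So it suffices to show that if $V$ is a nonzero representation of order $n$ then there is a nonzero $\OSp_g$-equivariant map $V_g^{\otimes n}\to V$.

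Next I would invoke reductivity. The groups $\Sp(2g)$ and $O(g,g)$ are reductive over $\Q$, so finite-dimensional (algebraic) representations are completely reducible. By the definition of ``order $n$'', every irreducible summand of $V$ appears as a summand of $V_g^{\otimes n}$. Writing $V\cong\bigoplus_\lambda m_\lambda V(\lambda)$ with each $V(\lambda)$ an irreducible summand of $V_g^{\otimes n}$, one has
\[
  \Hom_{\OSp_g}(V_g^{\otimes n},V)
  \;=\;
  \bigoplus_\lambda m_\lambda\,\Hom_{\OSp_g}(V_g^{\otimes n},V(\lambda)),
\]
and each factor is nonzero because $V(\lambda)$ occurs in $V_g^{\otimes n}$ (take the projection onto the $V(\lambda)$-isotypic component followed by the inclusion $V(\lambda)\hookrightarrow V$). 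Hence if $V\neq 0$, at least one $m_\lambda\neq 0$ and the Hom space, equivalently the invariant space, is nonzero.

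There is no substantive obstacle; the argument is a bookkeeping exercise built on self-duality of $V_g$ and complete reducibility. The only point to be slightly careful about is that ``order $n$'' really means that every irreducible in $V$ appears in $V_g^{\otimes n}$ (as opposed to merely in some tensor power), which is exactly the equivalent characterisation stated just before the lemma; this is what allows the projection step in the last paragraph to land in $V_g^{\otimes n}$ itself rather than in some other tensor power.
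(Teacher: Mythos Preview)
Your proof is correct and follows essentially the same approach as the paper: both arguments use self-duality of $V_g$ to rewrite the invariant space as a Hom space, and then produce a nonzero equivariant map from $V_g^{\otimes n}$ (or $V_g^{\otimes n}\otimes U$) to $V$. The paper phrases this last step slightly differently, picking a surjection $\pi:V_g^{\otimes n}\otimes U\to V$ from the subquotient definition of ``order $n$'' and viewing $\pi$ as a nonzero element of $(V_g^{\otimes n}\otimes V)^{\OSp_g}\otimes U^*$, whereas you use the equivalent irreducible-decomposition characterization directly; the substance is the same.
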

\begin{proof}
  We need to check that for $V$ non-zero the invariant space above is non-zero. 
  We may pick a surjective map of representations $\pi: V_g^{\otimes n}\otimes U\to V$. This is a non-zero element in 
  \begin{align*}
   ( (V_g^{\otimes n}\otimes U)^* \otimes V )^{\OSp_g}
   \cong  
   ( V_g^{\otimes n} \otimes V )^{\OSp_g} \otimes U^*,
  \end{align*}
using that $V_g$ is self-dual due to the presence of the non-degenerate pairing. But the presence of a the non-zero element $\pi$ implies that the first factor in the tensor product on the right-hand side is not zero.
\end{proof}
Note that we allow the zero vector space to be a (necessarily trivial) representation by convention.
We will use the above Lemma in the following form.
\begin{lemma}\label{lem:complex inv}
Let
\[
  C = \cdots \xrightarrow{d} C^{k-1} \xrightarrow{d}  C^k \xrightarrow{d} C^{k+1} \xrightarrow{d} \cdots
\]
be a dg vector space with a compatible action of $\OSp_g$. Suppose that the degree $k$-subspace $C^k$ is a finite dimensional representation of order $\leq n$ (in the sense above).
Then $H^k(C)=0$ iff 
\[
H^k \left( (C \otimes V_g^{\otimes M})^{\OSp_g}  \right) =0
\]
for any $M\leq n$.
\end{lemma}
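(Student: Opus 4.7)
The plan is to reduce the statement to Lemma~\ref{lem:van condition} applied piece by piece, exploiting that $\OSp_g$ is reductive over $\Q$, so that the functor of $\OSp_g$-invariants is exact in characteristic zero.

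First I would commute invariants past cohomology and past the tensor factor $V_g^{\otimes M}$, using that $V_g^{\otimes M}$ is a fixed finite-dimensional representation concentrated in degree $0$. Exactness of invariants then yields
\[
  H^k\!\bigl((C\otimes V_g^{\otimes M})^{\OSp_g}\bigr)
  \;=\; H^k(C\otimes V_g^{\otimes M})^{\OSp_g}
  \;=\; \bigl(H^k(C)\otimes V_g^{\otimes M}\bigr)^{\OSp_g}.
\]
The ``only if'' direction of the lemma is immediate from this identification, and it remains to prove that if $(H^k(C)\otimes V_g^{\otimes M})^{\OSp_g}=0$ for every $0\le M\le n$, then $H^k(C)=0$.

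Since $H^k(C)$ is a subquotient of $C^k$, and $C^k$ is of order $\le n$ by hypothesis, the space $H^k(C)$ is itself of order $\le n$. I would then decompose
\[
  H^k(C)\;=\;\bigoplus_{j=0}^{n} W_j
\]
by grouping the $\OSp_g$-isotypic components of $H^k(C)$ according to the \emph{minimal} $j$ for which the corresponding irreducible appears in $V_g^{\otimes j}$. By construction each $W_j$ is a representation of order $j$ and is an $\OSp_g$-stable direct summand of $H^k(C)$. Consequently $(W_j\otimes V_g^{\otimes j})^{\OSp_g}$ is a direct summand of $(H^k(C)\otimes V_g^{\otimes j})^{\OSp_g}$, which vanishes by hypothesis; applying Lemma~\ref{lem:van condition} to $W_j$ then gives $W_j=0$ for every $j\in\{0,\dots,n\}$, and hence $H^k(C)=0$.

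The only piece of bookkeeping one must be careful with is the decomposition itself: an irreducible of minimal order $j$ also appears as a subquotient of $V_g^{\otimes j'}$ for larger $j'$ of the same parity (by tensoring with the invariant pairing), so the partition of isotypic components has to be made according to \emph{minimal} order in order to pair each $W_j$ with the correct value $M=j$ from the hypothesis. Beyond this piece of representation-theoretic bookkeeping, the argument is essentially formal (exactness of invariants plus Lemma~\ref{lem:van condition}), and I do not anticipate further obstacles.
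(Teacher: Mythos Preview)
Your proof is correct and follows essentially the same route as the paper: commute invariants with cohomology using complete reducibility (the paper phrases this via Schur's Lemma), then reduce to Lemma~\ref{lem:van condition}. The only difference is that you spell out the decomposition $H^k(C)=\bigoplus_j W_j$ by minimal order to apply Lemma~\ref{lem:van condition} piecewise, whereas the paper simply writes ``by the previous Lemma we need to check that $(H^k(C)\otimes V_g^{\otimes M})^{\OSp_g}=0$'' and leaves this bookkeeping implicit; your version is the more honest one.
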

\begin{proof}
By the assumption $H^k(C)$ is of order $\leq n$, and hence by the previous Lemma we need to check that 
\[
  0=(H^k(C)\otimes V_g^{\otimes M})^{\OSp_g}
  =H^k(C \otimes V_g^{\otimes M})^{\OSp_g}.
\]
Using Schur's Lemma one can see that taking invariants of a complex of completely reducible representations commutes with taking cohomology. 
Hence 
\[
  H^k \left( C \otimes V_g^{\otimes M}  \right)^{\OSp_g}
  = 
  H^k \left( (C \otimes V_g^{\otimes M})^{\OSp_g}  \right)
\]
and we are done.
\end{proof}

Let us also remark that $\OSp_g(\Q)\subset \OSp_g(\C)$ is Zariski dense (this can be seen from \cite[Corollary 18.3]{BorelBook}).
Hence the complexification of the space of invariants of a (rational) representation $V$ can be identified with the invariants of the complexification, so that one has isomorphisms 
\[
  V^{\OSp_g}\otimes \C= V^{\OSp_g(\Q)}\otimes \C\cong (V\otimes \C)^{\OSp_g(\Q)}
  \xhookleftarrow{\cong} (V\otimes \C)^{\OSp_g(\C)}.
\]
One may use this observation to almost always work with complex representations of the complex semisimple Lie group $\OSp_g(\C)$ if desired, forgoing the need to use results from algebraic group theory.

Second, we shall need the classical invariant theory for the groups $\OSp_g$.
We directly state a version with the appropriate degree shifts appearing in our application.
To this end let $\Delta_1\in (V_g[-m]\otimes V_g[-m])^{S_2}$ be the symmetric element that is dual to the given canonical bilinear form on $V_g$. Next consider the action of the symmetric group $S_{2N}$ on the tensor product $V_g[-m]^{\otimes 2N}$ by permuting factors.
The action on the element $\Delta_1^{\otimes N}$ then gives a map 
\begin{gather*}
S_{2N} \to V_g[-m]^{\otimes 2N}\\
\sigma \mapsto \sigma \cdot \Delta_1^{\otimes N}.
\end{gather*}
This map factorizes over the cosets $S_{2N}/(S_2\wr S_N)$ (with $\wr$ denoting the wreath product).
Furthermore, the image is obviously $\OSp_g$-invariant, since so is $\Delta_1$.

\begin{thm}[A version of the First and Second Fundamental Theorems of Classical Invariant Theory]\label{thm:inv theory}
The map
\[
 \Q[ S_{2N}/(S_2\wr S_N) ] [-2Nm] \to \left( V_g[-m]^{\otimes 2N} \right)^{\OSp_g}
\] 
is surjective for all $g$, $N$, and an isomorphism for $g\geq N$.
\end{thm}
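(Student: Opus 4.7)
The plan is to reduce the statement to the classical First and Second Fundamental Theorems of invariant theory for $\Sp(2g)$ (when $m$ is odd) or $O(g,g)$ (when $m$ is even), with the degree shifts acting as pure bookkeeping. Writing $V_g[-m]^{\otimes 2N} = V_g^{\otimes 2N}[-2Nm]$, the invariant subspace in question is just a degree-shifted copy of $(V_g^{\otimes 2N})^{\OSp_g}$, and it suffices to analyze the undecorated object.

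First I would check that the map is well-defined, i.e.\ that the stabilizer in $S_{2N}$ of $\Delta_1^{\otimes N} \in V_g[-m]^{\otimes 2N}$ contains $S_2 \wr S_N$. The subgroup $S_2 \wr S_N$ is generated by the swaps $(2i-1,2i)$ within a pair and by the permutations of the $N$ pairs. A swap within a pair acts on $V_g[-m]\otimes V_g[-m]$ with a Koszul sign $(-1)^{m^2}=(-1)^m$, which is precisely the sign needed so that $\Delta_1$ (symplectic-antisymmetric for odd $m$, orthogonal-symmetric for even $m$) is preserved; this is also why the excerpt says $\Delta_1$ lies in $(V_g[-m]\otimes V_g[-m])^{S_2}$ after the shift. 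Permuting pairs is a product of transpositions of pairs of factors, each contributing a Koszul sign $(-1)^{(2m)(2m)}=1$, so these also fix $\Delta_1^{\otimes N}$. Hence the map factors through $S_{2N}/(S_2\wr S_N)$, whose elements are in bijection with the set of perfect matchings $\mathcal M_N$ of $\{1,\dots,2N\}$, and the image of a coset is exactly the ``pairing tensor'' associated with the corresponding matching.

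For surjectivity (the FFT), I would invoke the classical result that $(V_g^{\otimes 2N})^{\OSp_g}$ is spanned by these pairing tensors. Over $\C$ this is the standard FFT for $\Sp(2g,\C)$ and $O(2g,\C)$ (see e.g.\ Weyl or Goodman--Wallach); over $\Q$ one descends by the Zariski density remark already made in section \ref{sec:rep theory}, which identifies invariants over $\Q$ with the $\OSp_g(\C)$-invariants of the complexification. Odd-total-tensor-degree invariants vanish in both cases, so we only need the even case. For injectivity when $g\geq N$ (the SFT), I would use the classical fact that the $(2N-1)!! = |\mathcal M_N|$ pairing tensors are linearly independent as long as the rank condition $g\geq N$ holds; equivalently, the Brauer/Brauer-type algebra acting on $V_g^{\otimes 2N}$ becomes faithful on the pairing diagrams in this stable range, and the only relations (Pfaffian relations in the symplectic case, appropriate determinantal relations in the orthogonal case) involve $N+1$ pairings and hence are trivial when $g\geq N$.

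The main obstacle is pedantic rather than conceptual: verifying that all Koszul signs cancel so that the map is honestly $S_{2N}/(S_2\wr S_N)$-indexed and that the degree-shifted statement is equivalent to the classical one. Once these bookkeeping checks are in place, the result is exactly the classical FFT/SFT for the isometry group of a non-degenerate bilinear form, applied separately in the symmetric and antisymmetric case selected by the parity of $m$.
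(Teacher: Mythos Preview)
Your sketch is correct and matches the paper's approach: the paper does not give its own proof but simply cites the classical FFT/SFT for $\Sp(2g,\Q)$ and $O(2g)$ (via Procesi and Loday, and in this form \cite{KRWTor}), together with the complexification/Zariski-density remark from section~\ref{sec:rep theory} to pass between $\Q$ and $\C$ and between $O(2g)$ and $O(g,g)$. Your additional verification of the Koszul signs making the map well-defined on $S_{2N}/(S_2\wr S_N)$ is a useful clarification not spelled out in the paper.
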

The statement can be found in the present form as \cite[Theorem 2.6]{KRWTor}. The statement for $\Sp(2g,\Q)$ is also \cite[Theorems 9.5.9, 9.5.11]{Loday}, going back to \cite{Procesi}. One also finds the analogous statement for $O(2g)$ in \cite[Theorems 9.5.2, 9.5.5]{Loday}, from which the statement for $O(g,g)$ may be obtained via complexification.

\subsection{Bar and cobar construction and Koszul duality for Lie and commutative algebras}
The bar and cobar construction form a pair of adjoint functors between the category of augmented algebras over a Koszul operad $\POp$, and the category of conilpotent coaugmented dg coalgebras over the Koszul dual cooperad, see \cite[chapter 11]{lodayval}.
We shall only need the case where $\POp$ is either the operad $\Lie$ governing Lie algebras or the operad $\Com$ governing (non-unital, or equivalently augmented unital) commutative algebras, with the Koszul dual cooperads those governing cocommutative or Lie coalgebras.
In this case we have adjoint pairs 
\begin{align*}
  \Bar_\Lie : \dg \Lie \Alg &\rightleftarrows \dg \Com\Alg^c : \Bar^c_{\Com} \\
\Bar_\Com : \dg \Com \Alg &\rightleftarrows \dg \Lie\Alg^c : \Bar^c_{\Lie} 
\end{align*}
between the category of dg Lie algebras (resp. augmented dg commutative algebras) on the left-hand side and the category of conilpotent coaugmented dg cocommutative coalgebras (resp. conilpotent dg Lie coalgebras) on the right-hand side.
Concretely, one has that for a dg Lie algebra $\fg$
\[
  \Bar_\Lie \fg = (\Free^c_\Com( \fg[1] ), d) \cong (S(\fg[1]),d)
\]
is the free cocommutative coalgebra cogenerated by $\fg[1]$, with the Chevalley-Eilenberg differential.
Similarly, for a coaugmented dg cocommutative coalgebra $C$ one has that 
\[
  \Bar^c_{\Com} C = (\Free_{\Lie}(\bar C[-1]), d)
\]
is the free Lie algebra generated by the degree shifted coaugmentation coideal $\bar C$, equipped with the Harrison differential.
The other two cases are obtained by swapping the role of $\Com$ and $\Lie$.
If no confusion can arise we will often omit the subscript of the bar/cobar functors and just write $\Bar A$ which shall mean $\Bar_{\Lie} A$ if $A$ is a dg Lie algebra and $\Bar_\Com A$ if $A$ is an augmented dg commutative algebra.

It is known that the adjunction units and counits $\Bar^c\Bar A\to A$ and $C\to \Bar \Bar^c C$ are quasi-isomorphisms, see \cite[Proposition 2.5 and Theorem 2.6]{valHTHA}.
Furthermore, the functor $\Bar$ sends quasi-isomorphisms to quasi-isomorphisms \cite[Proposition 11.2.7]{lodayval}. The functor $\Bar^c$ does not preserve quasi-isomorphisms in general. However, it does preserve quasi-isomorphisms in most "good cases". For example if, as in all relevant cases for us, the quasi-isomorphism preserves an additional grading, and domain and codomain are degree-wise finite-dimensional, then $\Bar^c X\cong (\Bar X^c)^c$, so that one can use that $\Bar$ preserves quasi-isomorphisms.

Next we consider a (non-differential) graded Lie algebra $\fg$ defined by a homogeneous quadratic presentation
\[
\fg = \Free_\Lie(V) / \langle R\rangle 
\]
with the generating graded vector space $V$, the space of relations 
\[
R\subset \wedge^2 \fg  
\]
and $\langle R\rangle$ the ideal generated by $R$.
Such a graded Lie algebra automatically has an additional grading by the number of generators, i.e., we declare any $k$-ary bracket of generators to live in additional degree $k$.
This grading is then inherited by the bar construction $\Bar \fg$.

For simplicity, and since this is true in all our cases, we assume that the generators $V$ are concentrated in the single cohomological degree $\alpha$. Then $\gr^W \fg$ is concentrated in degree $\alpha W$.
Furthermore $\gr^W\Bar\fg$ is concentrated in cohomological degrees 
\[
  (\alpha-1)W,\dots , \alpha W-1.
\]
In particular, one has a map of dg cocommutative coalgebras 
\[
H^{[\alpha-1]}(\Bar \fg) \to \Bar\fg,
\]
using the notation of section \ref{sec:notation}.
The graded cocommutative coalgebra $C=H^{[\alpha-1]}(\Bar \fg)$ is called the \emph{Koszul dual} of $\fg$. Looking at the definition of the bar construction, $C$ also has a homogeneous quadratic presentation, with the space of cogenerators $V[1]$, and the space of quadratic corelations given by the annihilator $R^\perp$ of $R$.
We also consider the graded dual dg commutative algebra $C^c$, and call it the Koszul dual dg commutative algebra of $\fg$.
It then has the quadratic presentation 
\[
C^c = \Free_{\Com}(V^*[-1])/\langle R^\perp \rangle  
\]

The same reasoning applies also if we invert the roles of the commutative and Lie operads.

We will usually consider only finite dimensional spaces of generators $V$. In this case the relation of being the Koszul dual is clearly reflexive, that is, the Koszul dual of $C^c$ above is again $\fg$.

\begin{defi}\label{def:Koszul}
Let $\fg=\Free_\Lie (V)/\langle R \rangle$ be a graded Lie algebra with space of generators $V$ concentrated in cohomological degree $\alpha$ and the homogeneous quadratic space of relations $R$.
Then we say that $\fg$ is \emph{Koszul} if the canonical map from the Koszul dual cocommutative coalgebra
\[
  C=H^{[\alpha-1]}(\Bar \fg) \to \Bar\fg
\]
is a quasi-isomorphism.

Likewise, a commutative algebra $A=\Free_\Com (V)/\langle R \rangle$ given by a homogeneous quadratic presentation with $V$ concentrated in cohomological degree $\alpha$ is \emph{Koszul} if the canonical morphism of dg Lie coalgebras
\[
  \fc := H^{[\alpha-1]}(\Bar A) \to \Bar A
\]
from the Koszul dual graded Lie coalgebra $\fc$ is a quasi-isomorphism.
\end{defi}

Note that, trivially, if $\fg$ is Koszul then $\Bar \fg$ is formal as a dg cocommutative coalgebra.

\begin{lemma}
Let $\fg=\Free_\Lie (V)/\langle R \rangle$ be a quadratically presented Lie algebra as above, with $V$ finite dimensional, and $A=\Free_{\Com}(V^*[-1])/\langle R^\perp \rangle$ the Koszul dual graded commutative algebra. Then the following are equivalent:
\begin{itemize}
\item $\fg$ is Koszul.
\item $A$ is Koszul.
\end{itemize} 
\end{lemma}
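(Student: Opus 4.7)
The plan is to exploit that, in our weight-graded setting with degreewise finite-dimensional pieces, the notions of Koszulness on the Lie and commutative sides get swapped by graded duality, so that the statement reduces to the bar--cobar adjunction recalled above.

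First, I would reformulate Koszulness on the cobar side. Composing the image under $\Bar^c$ of the canonical inclusion $C \hookrightarrow \Bar \fg$ with the adjunction counit $\Bar^c \Bar \fg \to \fg$ (a quasi-isomorphism) gives a map $\Bar^c C \to \fg$. Since $V$ is finite-dimensional, every weight piece of $\Bar \fg$ and $C$ is finite-dimensional, and by the discussion of ``good cases'' in the excerpt, $\Bar^c$ preserves quasi-isomorphisms here; hence $\fg$ is Koszul iff $\Bar^c C \to \fg$ is a quasi-iso. Symmetrically, $A$ is Koszul iff $\Bar^c \fc \to A$ is a quasi-iso, where $\fc$ denotes the Koszul dual Lie coalgebra of $A$.

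Second, I would apply graded duality. The dg Lie algebra $\Bar^c_\Com C = (\FreeLie(\bar C[-1]), d)$ is weight-graded with finite-dimensional weight pieces, and a direct inspection of the formulas for the Harrison differentials identifies its graded dual with $\Bar_\Com(C^c) = \Bar_\Com A$ as dg Lie coalgebras. Since graded duality preserves quasi-isomorphisms between weight-graded complexes with finite-dimensional weight pieces, the map $\Bar^c C \to \fg$ is a quasi-iso iff its graded dual $\fg^c \to \Bar_\Com A$ is.

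Third, I would identify $\fg^c$ with the Koszul dual Lie coalgebra $\fc$ of $A$. Both are quadratic graded Lie coalgebras cogenerated by $V^*$ in cohomological degree $-\alpha$; the corelations of $\fg^c$ in $\wedge^2 V^*$ are the annihilator of $R$, while those of $\fc$ are the annihilator of $R^\perp$. By reflexivity of annihilators in the finite-dimensional space $\wedge^2 V$, these spaces coincide. Chaining the three reformulations then yields that $\fg$ is Koszul iff $A$ is.

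The step I expect to demand the most care is the second one: precisely matching the Harrison bar differential of $\Bar_\Com A$ with the graded dual of the Harrison cobar differential on $\Bar^c_\Com C$, and correctly tracking degree shifts and Koszul signs throughout. This is a formal but somewhat fiddly verification, made possible by the degreewise finite-dimensionality coming from the quadratic presentation with finite-dimensional $V$.
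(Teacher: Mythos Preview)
Your proof is correct and follows essentially the same strategy as the paper's proof: both arguments combine the bar--cobar adjunction, the fact that bar/cobar commute with graded duality in the degreewise finite-dimensional setting, and preservation of quasi-isomorphisms. The only difference is the order of operations: the paper first takes graded duals of the map $C\to\Bar\fg$ to obtain $\Bar^c\fg^c\to A$, then applies $\Bar$ and composes with the adjunction unit $\fg^c\to\Bar\Bar^c\fg^c$; you instead first apply $\Bar^c$ and compose with the counit, and only then dualize. Either route lands on the quasi-isomorphism $\fg^c\to\Bar A$ witnessing Koszulness of $A$, and your explicit identification $\fg^c\cong\fc$ via $(R^\perp)^\perp=R$ makes transparent a step the paper leaves implicit.
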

\begin{proof}
We only show one direction, the other follows by symmetry.
Koszulness of $\fg$ means that 
\[
  H^{[\alpha-1]}(\Bar \fg) \to \Bar\fg 
\]
is a quasi-isomorphism. Taking graded duals $(-)^c$ on both sides and using the finite dimensionality of $V$ this is equivalent to
\[
  \Bar^c \fg^c \to A
\]
being a quasi-isomorphism. Taking the bar construction, and using that the bar construction preserves quasi-isomorphisms, we find that 
\[
  \Bar \Bar^c \fg^c \to \Bar A
\]
is a quasi-isomorphism. But hence so is the composition 
\[
  \fg^c \to \Bar \Bar^c \fg^c \to \Bar A,
\]
so that $A$ is Koszul.
\end{proof}

In the case that $A$ is the Koszul dual of $\fg$ and either (and hence both of) $\fg$ and $A$ are Koszul, we will call $(\fg,A)$ a Koszul pair.

We will encounter Koszul objects in the following form, except for the additional complication that quasi-isomorphisms only hold in a range of additional degrees. (See also Proposition \ref{prop:cohom conc follows koszul 2} below.)

\begin{prop}\label{prop:cohom conc follows koszul}
Suppose that $\fg$ is a dg Lie algebra with an additional positive grading with the property that there is an $\alpha$ such that $\gr^W H(\fg)$ is concentrated in cohomological degree $\alpha$ for any $W$. Then $\fg$ is formal.

Suppose in addition that the cohomology $\gr^W H(\Bar \fg)$ of the graded pieces of the bar construction is concentrated in degree $(\alpha-1)W$ for any $W$, and that the $\gr^W \fg$  are finite dimensional.
Then $H(\fg)$ is Koszul with Koszul dual graded commutative algebra $A:=H(\Bar \fg)^c$. Furthermore, $A$ and $H(\fg)$ have quadratic presentations of the form 
\begin{align*}
H(\fg) &\cong \Free_\Lie(\gr^1 H(\fg)) / \langle \gr^2 A^c \rangle
\\
A &\cong \Free_\Com(\gr^1 A) / \langle \gr^2 H(\fg)^c \rangle\, .
\end{align*}
\end{prop}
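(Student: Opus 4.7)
The plan is to leverage the cohomological concentration to build a zigzag of dg Lie algebras from $\fg$ to $H(\fg)$ (yielding formality), and then run the same argument on the bar construction $\Bar\fg$ to extract Koszulness of $H(\fg)$.

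For formality, I use the zigzag of Section \ref{sec:notation},
$$\fg \xleftarrow{\iota} \tru^{[\alpha]}\fg \xrightarrow{\pi} H^{[\alpha]}(\fg).$$
Both maps are morphisms of dg Lie algebras because the additional grading is compatible with the bracket, so the truncation and the passage to cohomology inherit the Lie structure weight-by-weight. Weight-wise, $\iota$ is a quasi-isomorphism since the hypothesis forces all cohomology of $\gr^W\fg$ to sit in the top degree of the truncation, and $\pi$ is a quasi-isomorphism because the truncated complex's cohomology is concentrated at the top where it coincides with the image of $\pi$. Hence $\fg \simeq H(\fg)$ as dg Lie algebras. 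I then apply the exact same reasoning to $\Bar\fg$ — now a dg cocommutative coalgebra — using the hypothesis that $\gr^W H(\Bar\fg)$ lies in degree $(\alpha-1)W$: the zigzag
$$\Bar\fg \leftarrow \tru^{[\alpha-1]}\Bar\fg \to H^{[\alpha-1]}(\Bar\fg) =: C$$
consists of weight-wise quasi-isomorphisms of dg cocommutative coalgebras. Since $\Bar$ preserves quasi-isomorphisms, combining with formality gives $\Bar H(\fg) \simeq \Bar\fg \simeq C$, so $C$ is precisely the Koszul dual of $H(\fg)$ in the sense of Definition \ref{def:Koszul}, i.e.\ $H(\fg)$ is Koszul.

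The quadratic presentations then follow from general Koszul duality. The coalgebra $C$ is cogenerated by $\gr^1 C \cong \gr^1 H(\fg)[1]$ with quadratic corelations in $\gr^2 C$; graded-dualizing (permitted by finite dimensionality of each $\gr^W \fg$) and running the cobar construction yields $H(\fg) = \Bar^c C$ as the free Lie algebra on $\gr^1 H(\fg)$ modulo the annihilator of the corelations, which is identified with $\gr^2 A^c$ upon setting $A := C^c$. The parallel presentation of $A$ follows from the symmetric Koszul pairing between commutative algebras and Lie coalgebras spelled out in Definition \ref{def:Koszul}.

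The chief obstacle is making rigorous that $\tru^{[\alpha]}(-)$ and $H^{[\alpha]}(-)$ assemble into honest dg Lie (resp.\ cocommutative) subobjects so that the zigzags live in the correct categories rather than merely in dg vector spaces; this reduces to checking that bracket and coproduct respect the additional grading and interact correctly with the truncation, which is automatic from the compatibility hypothesis but must be made explicit weight-by-weight. A secondary subtlety is that $\Bar^c$ does not in general preserve quasi-isomorphisms, but here it does because the finite-dimensionality of each $\gr^W \fg$ allows the identification $\Bar^c X \cong (\Bar X^c)^c$ mentioned in the preliminaries, reducing everything to the quasi-isomorphism preservation of $\Bar$.
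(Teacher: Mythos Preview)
Your approach is essentially identical to the paper's: the formality zigzag through $\tru^{[\alpha]}\fg$, the parallel zigzag for $\Bar\fg$, and the deduction of Koszulness via $\Bar H(\fg)\simeq\Bar\fg\simeq C$ all match the paper exactly, and you correctly flag the two technical subtleties (truncations being honest algebraic subobjects, and $\Bar^c$ preserving quasi-isomorphisms under the finite-dimensionality hypothesis).

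The one place where your argument is weaker than the paper's is the derivation of the quadratic presentation. You write ``$H(\fg)=\Bar^c C$'', but $\Bar^c C$ carries a nontrivial cobar differential and is only connected to $H(\fg)$ by a zigzag of quasi-isomorphisms; and you assert without proof that $C$ is quadratically cogenerated. The paper handles both issues with a single degree count: it connects $H(\fg)$ to $\Bar^c A^c$ by a zigzag, observes that $\gr^W\Bar^c A^c$ is concentrated in cohomological degrees $(\alpha-1)W+1,\dots,\alpha W$ (since the additional grading is positive), and since $\gr^W H(\fg)$ sits in the top degree $\alpha W$ it must coincide with the top cohomology of $\gr^W\Bar^c A^c$, which by inspection of the cobar differential is exactly $\Free_\Lie(\gr^1 A^c[-1])$ modulo the coproduct image of $\gr^2 A^c$. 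Running the symmetric argument for $A$ then also yields that $A$ is generated by $\gr^1 A$, which is what you assumed. So your sketch is morally right, but the paper's explicit degree argument is what actually closes the loop.
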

\begin{proof}
For the first statement note that under the assumptions the zigzag 
\[
  \fg \leftarrow \tru^{[\alpha]}\fg \to H^{[\alpha]}(\fg) = H(\fg)
\]
realizes the formality.

For the second statement, we can show in exactly the same way that $(\Bar\fg)^c$ is formal as a dg commutative algebra.
Hence we find that 
\[
H(\fg) \xleftarrow{\sim} \Bar^c\Bar H(\fg) \to \bullet \leftarrow  \Bar^c A^c
\] 
is connected by a zig-zag of quasi-isomorphisms to the cobar construction of $\Bar^c A^c$, and all arrows preserve the additional grading. By assumption $\gr^W A^c$ is concentrated in degree $(\alpha-1)W$, and by the assumption that the additional grading is positive (i.e., $W\geq 1$) one has that $\gr^W\Bar^c A^c$ is concentrated in cohomological degrees $(\alpha-1)W,\dots,\alpha W$. But $\gr^W H(\fg)$ is concentrated in degree $\alpha W$ and hence must be identified with the top piece of the cohomology of $\gr^W\Bar^c A^c$. But this top piece is (by the definition of the cobar construction) the graded Lie algebra generated by $\gr^1 \bar A^c[-1]\cong \gr^1 H(\fg)$, with relations 
being the image of $\gr^2 \bar A^c$ under the coproduct 
\begin{equation}\label{equ:rel inj}
\gr^2 \bar A^c[-2] \to S^2(\gr^1\bar A^c)[-2] \cong \wedge^2 (A^c[-1]).    
\end{equation}
We may proceed in the same manner for $A$ and see that $A$ is connected to $\Bar^cH(\fg)^c$ by a zigzag of quasi-isomorphisms preserving the additional gradings 
\[
  A \xleftarrow{\sim} \Bar^c\Bar A \to \bullet \leftarrow  \Bar^c H(\fg)^c,
\]
and that $A$ also has a quadratic presentation as indicated. This also shows that $A$ is generated by $\gr^1 A$, so that in particular the map \eqref{equ:rel inj} is injective. (...and likewise the analogous map realizing $\gr^2 H(\fg)^c$ as the space of generators for the presentation of $A$.) 

Finally, Koszulness of $H(\fg)$ means that $\gr^W H(\Bar H(\fg))$ is concentrated in degree $(\alpha-1)W$, and that follows from the fact that $\Bar H(\fg)$ is quasi-isomorphic to $A^c$ as seen above, and the assumption on $A$.
\end{proof}

\begin{rem}
The material in this section is standard, but hard to reference in the present form. More commonly, one considers Koszul duality for associative algebras instead of commutative and Lie algebras.
Concretely, one may just replace $\Com$ and $\Lie$ above by the associative operad $\Ass$ to recover the standard notion of Koszulness.
Furthermore, if a commutative algebra $A$ with a homogenous quadratic presentation (and say degree-wise finite dimensional) is Koszul in our sense then it is Koszul as an associative algebra.
To see this, note that the associative cobar construction and the commutative cobar construction are related by the universal enveloping algebra construction $U$, i.e., $\Bar_{\Ass}^c A^c = U(\Bar_{\Com}^c A^c)$. Furthermore, $U(-)$ preserves quasi-isomorphisms and the grading. (Since on the level of complexes $U$ is just the symmetric product by the Poincar\'e-Birkhoff-Witt Theorem.)
Hence $\gr^W H(\Bar_{\Ass}^c A^c)$ is concentrated in degree $(1-\alpha)W$ iff so is $\gr^W H(\Bar_{\Com}^c A^c)$.
\end{rem}

\section{Graph complexes}\label{sec:GCs}

\subsection{Definitions of various graph complexes}
In this section we give a combinatorial definition of the graph complexes considered in this paper. 
These complexes have appeared at other places in the literature, for example \cite{CamposWillwacher, Matteo}.

We say that a (directed) graph with $n$ vertices and $k$ edges is an ordered set of $k$ pairs $(i,j)$ of numbers $i,j\in \{1,\dots,n\}$. We say that these $k$ elements are the edges of the graph, with the edge $(i,j)$ pointing from vertex $i$ to vertex $j$. At this point we allow arbitrary sets of edges, in particular tadpoles (or short cycles), that is edges of the form $(i,i)$, and we do not ask that the graph is connected.
We denote the set of such graphs with $n$ vertices and $k$ edges by $\gra_{n,k}$. This set carries an action of the group $S_n\times (S_2 \wr S_k)$, with $S_n$ acting by renumbering vertices, $S_k$ by reordering edges and the $S_2$ by changing the directions of the edges, i.e., by flipping the two members of the pairs. 

Let furthermore $V$ be any finite dimensional graded vector space and $m$ an integer.
We then define a graded vector space of coinvariants
\begin{equation}\label{equ:fGdef}
  \fG_{V,2m} := \bigoplus_{n\geq 1, k\geq 0}
  \left( \Q\gra_{n,k}[2mn]\otimes \Q[1-2m]^{\otimes k} 
  \otimes (S(V))^{\otimes n} \right)_{S_n\times (S_2 \wr S_k)}.
\end{equation}
Here the group $S_n$ acts diagonally on the vector space $\Q\gra_{n,k}$ generated by the set $\gra_{n,k}$ and the $n$ symmetric product factors $S(V)$, and $S_k$ acts diagonally on $\Q\gra_{n,k}$ and by permuting the factors $\Q[1-2m]$, with appropriate Koszul signs.
We may interpret elements of $\fG_{V,2m}$ as linear combinations of isomorphism classes of graphs, whose vertices may be decorated by zero or more elements of $V$. Additionally, such graphs come with an ordering of the edges and the decorations, and we identify two such ordering up to sign.

We say that the valence of a vertex in a graph is the number of incident half-edges, plus the number of decorations.
(E.g., a vertex $v$ with $3$ incident half-edges and decoration in $S^2(V)$ has valence $5$.)

We define on $\fG_{V,2m}$ a differential $d_c'$ defined such that for a graph $\Gamma\in \fG_{V,2m}$
\[
  d_c' \Gamma = \sum_{e}  \pm \Gamma / e,
\]
with the sum running over non-tadpole edges $e$ and the graph $\Gamma/e$ obtained by contracting the edge $e$.
We should remark on several points:
\begin{itemize}
  \item Mind that elements of $\fG_{V,2m}$ are equivalence classes of linear combinations of graphs with additional tensor factors.
  When we say ``...for a graph $\Gamma\in \fG_{V,2m}$" we actually mean that we take a representative of one equivalence class, and we take a linear combination with one term. 
  \item The operation of contracting some edge $e=(i,j)$ is precisely the following. We may suppose, using the action of the permutation group, that $(i,j)=(1,2)$, and that the edge $e$ is the first in the ordering. 
  Then we remove the edge $e$ from the list of edges.
  We then remove vertex $2$ and replace all occurrences of "$2$" by $1$ in the list of edges. Then we renumber the vertices again by $1,2,\dots,n-1$. Finally we alter the decorations by applying the natural map 
  \[
  S(V)^{\otimes n}\to  S(V)^{\otimes n-1} 
  \] 
  by multiplying the first two factors, using the commutative product on the symmetric algebra $S(V)$.
  With this ordering, the sign of the respective term in the formula above is ``$+$".
\end{itemize}

It is an exercise to check that $(d_c')^2=0$.

We also note that $(\fG_{V,2m},d_c')$ is naturally a graded commutative algebra, with the product given by disjoint union of graphs.

Next we assume that we have an element (the ``diagonal") of degree $2m$ 
\[
\Delta  \in S^2(V\oplus \Q 1 ),
\]
with $1$ a symbol that we shall interpret as the unit in $S(V)$. 
We then define a second differential $d_{cut}$ such that 
\[
d_{cut} \Gamma = \sum_e \pm \cut(\Gamma,e)   
\]
where the sum is again over edges and $\cut(\Gamma,e)$ is defined as follows:
We assume that $e=(1,2)$ and that it is the first edge in the ordering. Then $\cut(\Gamma,e)$ is obtained by removing $e$, and multiplying $\Delta$ from the left into $S(V)^{\otimes n}$, at positions $1$ and $2$ of the tensor product. The sign is then ``+", but might that there may arise Koszul signs in the multiplication. 

Again it is an exercise to check that $d_{cut}^2=(\pm d_c'\pm d_{cut})^2=0$.


We now specialize further to the situation at hand and let $V=\bar H^\bullet(W_{g})$, and $\Delta \in S^2(H^\bullet(W_{g}))$ be the canonical diagonal element.
To be concrete we may fix a basis $\{1,a_1,\dots, a_g,b_1,\dots,b_g, \omega\}$ of $H^\bullet(W_{g})$ such that the Poincar\'e duality pairing is 
\begin{align*}
\langle 1,\omega\rangle &= 1
&
\langle a_i,b_j\rangle &= \delta_{ij},
\end{align*}
with all other pairings zero.
Then 
\begin{equation}\label{equ:Delta explicit}
\Delta = 1\otimes \omega +\omega\otimes 1
+ 
\underbrace{
  (-1)^m \sum_{i=1}^g (a_i\otimes b_i + (-1)^m b_i\otimes a_i)}_{=:\Delta_1}.
\end{equation}

Then we define the operation $d_{mul}$ (the third and final part of the differential), that acts via the formula
\[
d_{mul} \Gamma = \sum_v \pm mul(v,\Gamma),  
\]
with the sum being over vertices with a single attached edge, and $mul(v,\Gamma)$ is obtained by the following procedure:
\begin{itemize}
  \item Replace the decorations of $v$ in $H(W_g)$ by their product in $H(W_g)$.
  \item Contract the edge adjacent to $v$ as in the definition of $d_c'$.   
\end{itemize}

Note that by degree reasons we obtain a nonzero contribution only if $v$ has at most two decorations.
We shall furthermore split 
\[
d_{mul} = d_{mul}' + d_{mul}''
\]
with $d_{mul}'$ containing the contributions for which $v$ has $0$ or $1$ decorations, and $d_{mul}''$ those for which $v$ has two decorations. Note that the term $d_{mul}'$ is identical to some summands of the contraction differential $d_c'$, while $d_{mul}''$ creates at least one $\omega$-decoration.

We consider the operation 
\[
d = -d_c' + d_{cut} + d_{mul}
\]
on $\fG_{V,2m}$. One may check that $d^2\neq 0$, and that more precisely $d^2\Gamma$ is obtained by summing over all tadpoles with a single adjacent edge, and replacing the tadpole by an $\omega$-decoration, multiplied by $2+(-1)^m 2g$. Pictorially:
\[
d^2:
\begin{tikzpicture}
  \node[int] (v) at (0,.4) {};
  \node[int] (w) at (0,-.4) {};
\draw (v) edge (w) (v) edge[loop] (v)
(w) edge +(-.5,-.5) edge +(.5,-.5) edge +(0,-.5);
\end{tikzpicture}
\mapsto
(\mathit{const})
\begin{tikzpicture}
  \node[int,label=90:{$\omega$}] (w) at (0,-.4) {};
\draw 
(w) edge +(-.5,-.5) edge +(.5,-.5) edge +(0,-.5);
\end{tikzpicture}
\]
We may however obtain a complex on which $d^2=0$ in one of three ways.
\begin{itemize}
\item We define the quotient complex 
\[
  \fG_{(g),1}^\tp = \fG_{V,2m} / I 
\]
by dividing by the ideal formed by graphs that contain an $\omega$-decoration, or a vertex of valence $\leq 2$.
On this quotient complex $d^2=0$ since $d^2$ creates one $\omega$-decoration.
\item We define the subcomplex 
\[
  \fG_{(g),1}\subset \fG_{(g),1}^\tp
\]
that furthermore contains no tadpoles.
\item We define the subquotient of $\fG_{V,2m}$
\[
  \fG_{(g)}
\] 
by taking the subcomplex containing no tadpoles, and quotienting by the graphs that contain at least one vertex of valence $\leq 2$.
\end{itemize}

\begin{lemma}
The above subquotients of $(\fG_{V,2m},d)$ are well-defined and in particular the differential squares to zero on these subquotients.
\end{lemma}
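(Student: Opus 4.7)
My plan is to verify, for each of the three complexes, both that the differential $d$ descends or restricts to a well-defined operator and that the result squares to zero. I would handle $d^2=0$ uniformly and first. The computation of $d^2$ given just before the lemma shows that $d^2\Gamma$ is a sum of graphs each containing a new $\omega$-decoration (produced by collapsing a tadpole-with-adjacent-edge into an $\omega$). In $\fG_{(g),1}^\tp$ this is killed directly, since graphs with an $\omega$-decoration are zero; in $\fG_{(g),1}$ and $\fG_{(g)}$ the no-tadpole condition removes the source of the obstruction outright, as soon as I have shown that $d$ never creates tadpoles (see below).

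For the well-definedness I would proceed in two stages. First, the subspace $I_\omega\subset \fG_{V,2m}$ of graphs with at least one $\omega$-decoration is preserved by every summand of $d=-d_c'+d_{cut}+d_{mul}$: the operators $d_c'$ and $d_{cut}$ only rearrange edges and manipulate decorations inside $S(V)$, leaving any existing $\omega$-factor alone, while $d_{mul}$ either preserves the $\omega$ or annihilates the term via $\omega\cdot x=0$ in $H(W_g)$ for $x\in\bar H$ of positive degree. Inside the quotient $\fG_{V,2m}/I_\omega$, the subspace of graphs with all valences $\geq 3$ is then a subcomplex: on such a graph $d_c'$ merges two $\geq 3$-valent vertices into one of valence $\geq 4$ and leaves the other valences alone; after killing $I_\omega$ only the $\Delta_1$-piece of $\Delta$ contributes to $d_{cut}$, and it removes one half-edge and adds one decoration at each endpoint, preserving valences; $d_{mul}'$ has no input when all valences are $\geq 3$; and $d_{mul}''$ outputs $\omega$-decorations and is therefore zero in the quotient. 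This produces $\fG_{(g),1}^\tp$.

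Second, I would show that the no-tadpole subspace is closed under $d$. The only way $d$ could introduce a tadpole is by contracting (via $d_c'$ or $d_{mul}$) one of a pair of parallel edges between two distinct vertices. But parallel edges vanish outright in $\fG_{V,2m}$: the edge factor $\Q[1-2m]$ sits in the odd degree $2m-1$, so transposing two parallel edges under the $S_2\wr S_k$-action of~\eqref{equ:fGdef} produces Koszul sign $-1$ while fixing the underlying graph, forcing the class to vanish in the coinvariants. Hence $d$ never creates tadpoles, which both yields the subcomplex $\fG_{(g),1}\subset\fG_{(g),1}^\tp$ and validates the analogous two-step construction of $\fG_{(g)}$ as first the no-tadpole subcomplex of $\fG_{V,2m}$ and then the quotient by graphs with a vertex of valence $\leq 2$.

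\noindent The hard part is essentially the sign bookkeeping behind the vanishing of parallel edges and the preservation of $I_\omega$ under $d_{mul}$; these are routine Koszul-sign calculations for which the conventions of Section~\ref{sec:GCs} have been designed, and once they are in place the three subquotients and their $d^2=0$ statement drop out by the arguments above, with no new conceptual input required.
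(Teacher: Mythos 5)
Your treatment of $\fG_{(g),1}^\tp$ and $\fG_{(g),1}$ is essentially sound, and for the first of these you take a genuinely different route from the paper: instead of checking that the ideal $I$ of graphs with an $\omega$-decoration or a vertex of valence $\leq 2$ is closed under $d$, you quotient only by your $I_\omega$ and then exhibit the span of all-valences-$\geq 3$ graphs as a $d$-closed \emph{subspace} of that quotient. This does produce the right complex and neatly sidesteps the paper's cancellation argument, though strictly speaking it yields a complex isomorphic to $\fG_{(g),1}^\tp$ rather than verifying that the quotient $\fG_{V,2m}/I$ as literally defined is a dg vector space; a subspace being $d$-closed does not imply that a complementary subspace spans a subcomplex, so these are not the same statement. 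Your parallel-edge vanishing argument and the uniform treatment of $d^2$ via the $\omega$-creation formula agree with the paper and are fine.

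The genuine gap is the case of $\fG_{(g)}$, which you dismiss as ``analogous''. It is not. Since $\fG_{(g)}$ retains $\omega$-decorations, your subspace trick is unavailable: inside the no-tadpole subcomplex of $\fG_{V,2m}$ the span of graphs with all valences $\geq 3$ is \emph{not} closed under $d$, because the $1\otimes\omega+\omega\otimes 1$ part of $\Delta$ in $d_{cut}$ deletes a half-edge at one endpoint while adding only the unit there, lowering that valence by one; likewise the terms of $d_{mul}''$ must now be kept. One is therefore forced to pass to the quotient by the span $J$ of graphs containing a vertex of valence $\leq 2$, and must prove $dJ\subseteq J$. This is exactly where the paper does its real work: $d_{cut}$ never increases valences; $d$ acts on at most two vertices, so graphs with two or more low-valence vertices stay in $J$; and for a graph with a unique low-valence vertex $v$, the dangerous terms of $d_c'$ that contract the edge at $v$ and thereby ``repair'' the graph are cancelled by $d_{mul}'$ --- this cancellation is the reason the differential is $-d_c'+d_{cut}+d_{mul}$ and why the paper introduces $d_c=d_{mul}'-d_c'$. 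None of this appears in your proposal, so the well-definedness of $\fG_{(g)}$ is not established. You should add this verification (it is also what is needed if you want to prove, rather than circumvent, that the paper's quotient $\fG_{V,2m}/I$ defining $\fG_{(g),1}^\tp$ is itself well-defined).
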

\begin{proof}
  We first claim that $I$ as in the definition is a subcomplex. (It is clear that it is an ideal.)
First we note that the differential $d$ cannot remove a decoration by $\omega$ on a graph in $\fG_{V,2m}$. 
We then have to check that the differential $d$ cannot remove vertices of valence $\leq 2$. 
Since $d$ acts "locally" on only two vertices, it is clear that for a graph $\Gamma$ with three or more vertices of valence $\leq 2$, $d\Gamma$ contains no graphs that have no vertices of valence $\leq 2$. Furthermore, if $\Gamma$ contains exactly two vertices of valence $\leq 2$ then in the "worst case" the differential can contract an edge between them, and hence still produce a vertex of valence $\leq 2$.
So we are left with the case that $\Gamma$ contains a unique vertex $v$ of valence $\leq 2$, and we need to consider the parts of $d$ affecting $v$. The piece $d_{cut}$ cannot increase the valence of vertices anyway. But the terms of the piece $d_c'$ that would remove a single vertex $v$ of valence $\leq 2$ are cancelled by the contribution $d_{mul}'$. Hence we conclude that $dI\subset I$ as desired.
Furthermore, since $d^2$ creates an $\omega$-decoration, we have that $d^2\Gamma\subset I$ for all $\Gamma\in \fG_{V,2m}$, and hence $\fG_{(g),1}^\tp = \fG_{V,2m} / I$ is a well defined dg vector space.

Next, the differential can clearly not create tadpoles: The only piece that potentially could is the contraction part $d_c'$ acting on an edge of a double edge. But graphs with multiple edges have odd symmetries by permuting these edges, and hence are zero. Hence no tadpoles can be created by the differential and $\fG_{(g),1}\subset \fG_{(g),1}^\tp$ is indeed a subcomplex.

For $\fG_{(g)}$ one notes that if a graph $\Gamma$ has no tadpole, then $d^2\Gamma=0$. So the subspace of $\fG_{V,2m}$ spanned by graphs without tadpoles is a complex.
One then shows in the same way as above that the quotient by the subspace spanned by graphs with vertices of valence $\leq 2$ is well-defined, to see that $\fG_{(g)}$ is a well-defined dg vector space.
\end{proof}

For later use we shall also define 
\[
d_c = d_{mul}'-d_c',  
\]
thus removing the pieces from $d_c'$ that are cancelled by $d_{mul}'$. 

Suppose that $\Gamma$ is a graph in one of the three graph complexes $\fG_{(g),1}$, $\fG_{(g),1}^\tp$ or $\fG_{(g)}$ above, with $e$ edges, $v$ vertices and a total decoration degree $mD$. (In other words, $\Gamma$ has $D$ decorations if we count the top class $\omega$ as two decorations.) Then the cohomological degree of $\Gamma$ is 
\[
k=(2m-1) e - 2m v +mD.  
\]
We furthermore define the \emph{weight} of $\Gamma$ as 
\[
W=2(e-v) +D.
\]
The weight is preserved by the differential, and hence the graph complexes split into a direct sum of pieces of fixed weight.

Next, note that all three graph complexes are equipped with a commutative product by taking disjoint union of graphs. This product is compatible with the differential and the weight grading, since the weight is additive.
Furthermore, any graph splits uniquely as a union of connected graphs. It follows that our three graph complexes above are quasi-free as (non-unital) graded commutative algebras, so that we can write 
\begin{align*}
  \fG_{(g),1}&\cong S^+(\G_{(g),1}[-1]) &
  \fG_{(g),1}^\tp&\cong S^+(\G_{(g),1}^\tp[-1]) &
  \fG_{(g)}&\cong S^+(\G_{(g)}[-1]).
\end{align*}
Here we denoted by $\G_{(g),1}[-1]\subset \fG_{(g),1}$ the subcomplex spanned by connected graphs, and similarly for the other complexes.
Since the differential is compatible with the commutative algebra structure (i.e., a derivation) in each case, it endows the degree shifted connected parts $\G_{(g),1}$, $\G_{(g),1}^\tp$ and $\G_{(g)}$ with dg Lie coalgebra structures.
We shall denote the dual dg Lie algebras by 
\begin{align*}
  \GC_{(g),1}&:=\G_{(g),1}^* &
  \GC_{(g),1}^\tp&:= (\G_{(g),1}^\tp)^* &
  \GC_{(g)}&:= \G_{(g)}^*
\end{align*}
and the differential by
\[
\delta = d^* = d_c^* + d_{cut}^* + (d_{mul}'')^* = \delta_{split} + \delta_{glue} + \delta_Z.
\]
The combinatorial expressions for the different pieces of the (adjoint) differential are given in the introduction. We note in particular that we absorb the sign into the definition of $\delta_{split}$ for our convenience.

\subsection{Action of the orthogonal or symplectic group}
All graph complexes considered carry a natural action of the orthogonal or symplectic group $\OSp_g$ of linear endomorphisms of $V_g:= H^m(W_{g,1})= H^m(W_{g})$ that preserve the natural pairing on $V_g$.
The action is by transforming the decorations in $\bar H(W_{g,1})$.
Accordingly, one has an action of the Lie algebra of  $\OSp_g$, that we denote by $\osp_g^0$.

In the case of the graphs complexes $\fG_{(g)}$, $\G_{(g)}$ and $\GC_{(g)}$ the action of $\osp_g^0$ may be enlarged to a (non-flat, see below) action of a graded Lie algebra 
\[
\osp_g := \osp_g^0 \ltimes \osp_g^{nil},  
\]
of graded endomorphisms of $H(W_g)$ that preserve the pairing and kill the unit element $1\in H^0(W_g)$.
To be specific, any element $\osp_g^{nil}$ is an  endomorphism
\begin{gather*}
  \phi_c : H(W_g) \to H(W_g)
\end{gather*}
defined for $c\in H^m(W_g)$ such that 
\begin{align*}
\phi_c(\omega) &= c
\\
\phi_c(x) &= -\langle c,x\rangle \, 1 \quad\text{for $x\in H^m(W_g)$} \\
\phi_c(1) &= 0.
\end{align*}
(Here $\omega$ is again the cohomology class represented by the unit volume form on $W_g$.)
The endomorphisms $\phi_c$ preserve the pairing in the sense that 
\[
\langle \phi_c(x),y\rangle +  (-1)^{m|x|} \langle x,\phi_c(y)\rangle=0
\]
for any homogeneous $x,y\in H(W_g)$.
As an $\OSp_g$- (and $\osp_g$-)representation $\osp_g^{nil}$ is equivalent to the $2g$-dimensional defining representation, concentrated in cohomological degree $-m$.
Furthermore, one checks that $\osp_g^{nil}\subset \osp_g$ is an abelian Lie subalgebra, i.e, the commutators $[\phi_c,\phi_d]=0$ vanish.

We now extend the dg Lie algebra $\GC_{(g)}$ to the dg Lie algebra $\GCex_{(g)}$, defined as a graded vector space as 
\[
\GCex_{(g)} := \osp_g^{nil} \oplus \GC_{(g)}.
\]
We endow $\GCex_{(g)}$ with a graded Lie algebra structure by extending the Lie bracket on $\GC_{(g)}$ to a Lie bracket on $\GCex_{(g)}$ such that for $\phi_c\in \osp_{g}^{nil}$ and $\Gamma\in \GC_{(g)}$ 
$
  [\phi_c,\Gamma]  
$
is obtained by acting with $\phi_c$ on the decorations in $\bar H(W_g)$. We extend the differential such that 
\[
  \delta \phi_c 
  =\frac 12 
  \begin{tikzpicture}
    \node[int,label=90:{$c^* \Delta_{1}^*$}] (v) at (0,0){};
  \end{tikzpicture},
\]
with $c^*\in H_m(W_g)$ the Poincar\'e dual element to $c$, and $\Delta_1^*\in H_m(W_g)\otimes H_m(W_g)$ the diagonal element in the homology dual to the diagonal element in cohomology $\Delta_1$ as in \eqref{equ:Delta explicit}. (We will later also use the notation $\Delta_1$ (abusively) for the diagonal in homology instead of $\Delta_1^*$ to simplify notation.)
One may check that with this differential and Lie bracket $\GCex_{(g)}$ is a well-defined dg Lie algebra.
We may also extend the weight grading on $\GCex_{(g)}$ by declaring the elements of $\osp_g^{nil}$ to have weight 1. Note that this is necessary so that the differential and Lie bracket still preserve the weight.

For notational convenience we will define for $\alpha\in H_m(W_g)$ 
\[
\phi_\alpha := \phi_{\alpha^*}  
\]
using the identification of $H_m(W_g)$ and $H^m(W_g)$ by Poincar\'e duality.

Finally, we also introduce the following notation: We denote the graded dual Lie coalgebra by
\[
\Gex_{(g)} := (\GCex_{(g)})^c.
\]
As a graded vector space this is $\G_{(g)}\oplus \osp_g^{nil,c}$.
Furthermore, we consider the version with disconnected graphs $\fGex_{(g)}$ defined as the cobar construction (Chevalley-Eilenberg complex)
\[
  \fGex_{(g)} = \Bar^c \Gex_{(g)}.
\] 
This complex also has a graphical interpretation, see section \ref{sec:CE interpretation} below.

\subsection{Auxiliary graph complexes $\Graphsg(n)$}

In \cite{CamposWillwacher} Campos and the third author define dg graphical cocommutative coalgebra models
for framed configuration spaces of $n$ points on surfaces (i.e., for $W_g$ if $m=1$ in our notation).
As auxiliary objects in some proofs below, we will need to use closely analogous graph complexes defined for general $m$, which we denote by $\Graphsg(n)$. Elements in $\Graphsg(n)$ are linear combinations of graphs as in $\GCg$, but with $n$ additional labelled vertices (called ``external vertices'', labelled by $\{1, \dots, n\}$), and for which tadpoles are allowed only at external vertices. Additionally, we require that every connected component contains at least one external vertex. 
\[
\begin{tikzpicture}
  \node[int,label=90:{$\alpha$}] (a) at (0.5,1) {};
  \node[int] (b) at (1.5,1) {};
  \node[ext] (u) at (0,0) {$1$};
  \node[ext] (v) at (1,0) {$2$};
  \node[ext] (w) at (2,0) {$3$};
  \draw (u) edge (a);
  \draw (v) edge (a);
  \draw (b) edge (u);
  \draw (b) edge (v);
  \draw (b) edge (w);
  \draw (w) edge [out=120-30, in=60-30, loop, looseness=10] (w);
\end{tikzpicture}
\]
The differential is as in $\GCg$, where additionally the splitting operation $\delta_{split}$ (see \eqref{equ:deltasplit}) splits an external vertex into an internal and an external vertex connected by an edge. The coalgebra structure is given by decomposing a graph into internally-connected components (graphs that stay connected after deleting the external vertices). More precisely, $\Graphsg(n)$ is freely cogenerated by $\icgg(n)$, the space of internally connected graphs, that is
\[
\Graphsg(n) = \Bar (\icgg(n)).
\]
Furthermore, $\GCex_{(g)}$ naturally acts on $\Graphsg(n)$ via the ``glueing''-operation by gluing decorations from a graph in $\GCex_{(g)}$ to decorations on a graph in $\Graphsg(n)$.

For what follows, we will only need $\Graphsg(1)$, that is graphs with one external vertex. There is a natural quasi-isomorphism of coalgebras
\begin{equation}\label{eq:BVGraphsg to CWg}
\Graphsg(1) \leftarrow (H_\bullet(W_{g}) \oplus \theta H_\bullet(W_{g}), d) =: C_\bullet(W_{g})
\end{equation}
determined on cogenerators by mapping
\begin{itemize}
    \item $\alpha \in H_\bullet(W_{g})$ to the graph with a single decoration $\alpha$ at the external vertex,
    \item $\theta 1$ to the tadpole,
    \item $\theta c_i$ and $\theta \omega$ to $0$.
\end{itemize}
The only non-zero differential on the right-hand side is $d\omega = (2+(-1)^m 2g)\theta$. The morphism above is dual to the usual projection onto graphs with no internal vertices and multiplying decorations which  commonly appears in the literature (\cite[Section 6]{idrissi}, \cite[Appendix A]{CamposWillwacher}), and it is known to be a quasi-isomorphism.

We also note that the complex $C_\bullet(W_{g})$ computes the homology of the sphere bundle of the tangent bundle of $W_g$. 

\begin{lemma}
The map \eqref{eq:BVGraphsg to CWg} is a quasi-isomorphism.
\end{lemma}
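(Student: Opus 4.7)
The plan is to adapt the standard argument used to show that graphical complexes model (framed) configuration spaces, as in \cite{CamposWillwacher} and \cite{idrissi}. Conceptually, $C_\bullet(W_g)$ is the minimal Gysin-type model for the homology of the sphere tangent bundle $STW_g$, and $\Graphsg(1)$ is a Kontsevich-style graphical model of the framed configuration space of one point in $W_g$, so the comparison map should be a quasi-isomorphism by the general principle that graph complexes compute (co)homology of configuration spaces of the underlying manifold.

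First, I would verify that the stated map is a chain map. The only non-trivial point is the identity $d\omega = (2+(-1)^m 2g)\theta$ in $C_\bullet(W_g)$. In $\Graphsg(1)$, the differential applied to the external vertex decorated by $\omega$ gets a contribution from $\delta_Z$ producing an internal vertex decorated by $\Delta_1$ connected by an edge to the external vertex, while $\delta_{split}$ contributes nothing because any would-be internal vertex created has valence at most two. Combining this with the action of $\delta_{glue}$ and of the tadpole-creating operations at the external vertex, the cumulative coefficient that pops out must match the self-pairing of the full diagonal $\Delta = 1\otimes\omega+\omega\otimes 1+\Delta_1$, whose value is exactly $\chi(W_g)=2+(-1)^m 2g$.

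Next, to prove quasi-isomorphy, I would filter $\Graphsg(1)$ by the number of internal vertices. Both $\delta_{split}$ and $\delta_Z$ strictly increase this number by one, while $\delta_{glue}$ preserves it, so on the $E_0$-page only $\delta_{glue}$ remains. For a fixed underlying combinatorial graph, $\delta_{glue}$ acts as a Koszul-type differential on the symmetric algebra of vertex decorations with respect to the non-degenerate pairing on $H_m(W_g)$. A classical Koszul contraction argument then shows that its cohomology is concentrated on configurations admitting no further glueings; combined with the trivalence constraint on internal vertices, this forces all internal vertices to disappear at $E_1$, leaving precisely the decorations at the external vertex plus the allowed tadpole. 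This is exactly the image of $C_\bullet(W_g)$, and the $E_1$-differential encodes the Gysin relation $d\omega=\chi(W_g)\theta$, so the spectral sequence collapses and the map is a quasi-isomorphism.

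The main obstacle is the Koszul-style $E_0$-computation: while formally analogous to the arguments in \cite[Appendix A]{CamposWillwacher} and \cite{idrissi}, it requires careful combinatorial bookkeeping near the external vertex, where both tadpoles and $\omega$-decorations behave exceptionally, and it must be reconciled with the trivalence requirement on internal vertices that eliminates a priori many candidate representatives.
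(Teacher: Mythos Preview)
Your proposal contains a genuine gap at the $E_0$-page analysis. You filter by the number of internal vertices and correctly observe that only $\delta_{glue}$ survives on the associated graded. But then you assert that ``for a fixed underlying combinatorial graph, $\delta_{glue}$ acts as a Koszul-type differential on the symmetric algebra of vertex decorations.'' This is not right: $\delta_{glue}$ replaces a pair of decorations by a new \emph{edge}, so it changes the underlying combinatorial graph. There is no Koszul complex on decorations at a fixed graph here; the relevant Koszul-type argument (as in section~\ref{sec:CGamma} of the paper) lives on the \emph{dual} side, where $d_{cut}$ replaces an edge by a pair of decorations and one can work with two-colored edges on a fixed core graph.

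Concretely, your claim that the $E_0$-cohomology is concentrated in graphs with no internal vertices fails already for the simplest example: take the external vertex joined by a single edge to one internal vertex $v$ carrying two decorations $\alpha,\beta\in H_m(W_g)$. This graph is $\delta_{glue}$-closed (the only glueing produces an internal tadpole, which is zero), and it is not $\delta_{glue}$-exact within the one-internal-vertex stratum, since any preimage would have no edges and hence be disconnected. So nontrivial classes with internal vertices survive to $E_1$, and your spectral sequence does not collapse where you say it does.

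The paper's own argument avoids this by a different route: it first filters by total decoration degree, which splits off the external tadpole as a separate factor, and then handles the tadpole-free part by a two-step filtration on the valency of the external vertex, comparing graphs where the external vertex has valency $\geq 2$ with those of valency $\leq 1$ via an injective map whose cokernel is visibly $H_\bullet(W_g)$. This sidesteps any Koszul-style computation on $\delta_{glue}$ entirely.
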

\begin{proof}
First, filter by the total degree of the decorations in $\bar H_\bullet(W_g)$. The associated graded complex then splits as the direct sum of subcomplexes of graphs with a tadpole at the external vertex and graphs without. Removing the tadpole defines an isomorphism of the two complexes, thus it remains to see that the tadpole-free complex $\Graphsg^{\notadp}(1)$ is quasi-isomorphic to $H_\bullet(W_g)$. Finally, we write $\Graphsg^\notadp(1)$ as a the total complex of the two-term complex
\[
{\Graphsg^{\notadp,\geq 2}(1)} \to \Graphsg^{\notadp, \leq 1 }(1),
\]
where ${\Graphsg^{\notadp,\geq 2}(1)}$ and ${\Graphsg^{\notadp, \leq 1}(1)}$ are graphs for which the external vertex has valency at least two, and no more than one, respectively. The map ${\Graphsg^{\notadp,\geq 2}(1)} \to {\Graphsg^{\notadp, \leq 1}(1)}$ is injective and has cokernel represented by graphs without any edges and at most one decoration at the external vertex which is identified with $H_\bullet(W_g)$.
\end{proof}

Finally, let us also mention the following version corresponding to $W_{g,1}$. Namely, we can consider the dg subcoalgebra $\Graphs^\notadp_{(g),1}(1) \subset \Graphsg(1)$ consisting of graphs that do not contain any decorations with the top homology class $\omega$ and no tadpoles. In this case, the action above induces an action of $\GC_{(g),1}$. Moreover, $\Graphs^\notadp_{(g),1}(1)$ is a coalgebra model for $H(W_{g,1})$ by the same proof as above.

\section{Proof of part (i) of Theorem \ref{thm:GC comparison}}\label{sec:part i proof}
As defined above $\GC_{(g),1}=\GC_{(g),1}^\tp/I_g^\tp$, with $I_g^\tp$ being the (closed) subspace spanned by graphs with at least one tadpole.
We will check that $H(I_g^\tp)$ is $2g$-dimensional, concentrated in degree $2-m$, spanned by graphs of the form 
\begin{equation}\label{equ:Gamma a}
  \Gamma_\alpha =
\begin{tikzpicture}
  \node[int,label=-90:{$\alpha$}] (v) at (0,0) {};
  \draw (v) edge[loop] (v);
\end{tikzpicture},
\end{equation}
with $\alpha\in H^m(W_{g,1})$.
The graphs $\Gamma_\alpha$ have weight $1$, hence $H(I_g^\tp)$ is zero in weights $\geq 2$, and part (i) of Theorem \ref{thm:GC comparison} follows.

To compute $H(I_g^\tp)$ we proceed analogously to the proof of \cite[Proposition 3.4]{Willgrt}, but we shall streamline the proof a bit.
First we filter $I_g^\tp$ by the number of loops, i.e., we define the bounded above, descending complete filtration
\[
  I_g^\tp =  \mF^1 I_g^\tp \supset \mF^2 I_g^\tp \supset \cdots
\] 
with $\mF^p I_g^\tp\subset I_g^\tp$ spanned by graphs with first Betti number at least $p$.
The (complete) associated graded complex may be identified with $(I_g^\tp,\delta_{split})$.
We claim that $H(I_g^\tp,\delta_{split})$ is $2g$-dimensional, concentrated in degree $2-m$, and spanned by the classes of the graphs $\Gamma_\alpha$ above.
If we assume this claim the proof concludes, since the spectral sequence must converge at this point by degree reasons.

To show the claim, let us call a \emph{long tadpole} a tadpole at a trivalent vertex attached to an edge (i.e., not to a decoration)
\[
\begin{tikzpicture}
  \node[int] (v) at (0,0) {};
  \draw (v) edge[loop] (v) edge +(0,-.7);
\end{tikzpicture}
\]
and the edge adjacent to the tadpole the long tadpole edge.
We filter $(I_g^\tp,\delta_{split})$ again by the number of edges that are \emph{not} long tadpole edges. The associated graded complex may then be identified with $(I_g^\tp,\delta_{split}')$, with $\delta_{split}'$ the piece of the differential that creates one long tadpole edge,
\[
  \delta_{split}' : 
\begin{tikzpicture}
  \node[int] (v) at (0,0) {};
  \draw (v) edge[loop] (v) edge +(0,-.7) edge +(.5,-.7) edge +(-.5,-.7);
\end{tikzpicture}
\mapsto 
\begin{tikzpicture}
  \node[int] (v) at (0,.3) {};
  \node[int] (w) at (0,-.3) {};
  \draw (v) edge[loop] (v) edge (w) (w) edge +(0,-.7) edge +(.5,-.7) edge +(-.5,-.7);
\end{tikzpicture}
\]

This differential has an obvious homotopy
\begin{gather*}
h :  I_g^\tp \to I_g^\tp \\
I_g^\tp\ni \Gamma\mapsto h\Gamma = \sum_{e} (-1)^{e+1} \Gamma/e,
\end{gather*}
where the sum is over all long tadpole edges, and $\Gamma/e$ is obtained by contracting the edge.
One easily computes that 
\[
(h \delta_{split}'+\delta_{split}'h )\Gamma 
=
N(\Gamma) \Gamma,
\]
with $N(\Gamma)$ the number of tadpoles in the graph $\Gamma$ that are either long tadpoles, or attached to a vertex of valency $\geq 4$ (including the two tadpole half-edges).  
But since there must be at least one tadpole in the graph by definition on $I_g^\tp$, we see that $N(\Gamma)>0$ for all graphs except for those of the form $\Gamma_\alpha$, thus finishing the computation of $H(I_g^\tp)$, an thus the proof that \eqref{equ:thm GC comparison 1} is an isomorphism for $W\geq 2$.

\subsection{Comparison in weight 1}\label{sec:weight 1 comparison}

Note that part (i) of Theorem \ref{thm:GC comparison} excludes weight 1. In this section we shall fill this gap and compute the weight 1 parts of our three dg Lie algebras.

First consider $\gr^1\GC_{(g),1}$.
This complex is spanned by graphs with a single vertex and three decorations 
\begin{equation}\label{equ:Gama abc}
  \Gamma_{\alpha\beta\gamma}:=
    \begin{tikzpicture}
      \node[int,label=90:{$\alpha\beta\gamma$}] {};
    \end{tikzpicture}
    \quad \text{with $\alpha,\beta,\gamma\in H_m(W_{g,1})$},
\end{equation}
living in cohomological degree $1-m$.
The differential is zero, and hence the cohomology is trivially identified with 
\[
  H(\gr^1\GC_{(g),1}) = \gr^1\GC_{(g),1} \cong
  S^3 \bar H(W_{(g),1})[-1-2m].
\]

Next consider $\gr^1\GC_{(g),1}^\tp$. This complex is spanned by the graphs $\Gamma_{\alpha\beta\gamma}$ of \eqref{equ:Gama abc}, and in addition the graphs $\Gamma_\alpha$ of \eqref{equ:Gamma a}.
The differential acts as 
\[
\delta \Gamma_{\alpha\beta\gamma}
= 
\delta_{glue} \Gamma_{\alpha\beta\gamma}
= 
\langle \alpha, \beta \rangle \Gamma_\gamma
+
\langle \beta, \gamma \rangle \Gamma_\alpha
+
\langle \gamma, \alpha \rangle \Gamma_\beta,
\]
with $\langle-,-\rangle$ the canonical pairing on $\bar H^m(W_{g,1})$.
Note that the graphs $\Gamma_\alpha$ span a $2g$-dimensional subspace equivalent to the defining ($2g$-dimensional) representation of $\OSp_g$. Since this representation is irreducible, we know by Schur's Lemma that $\delta$ must be either surjective or zero.
If $g\geq 2$ one easily checks that $\delta$ is not zero, and hence surjective.
For $g=0$ one has that trivially $\gr^1\GC_{(g),1}=0$.
For $g=1$ and $m$ odd one has $S^3 \bar H(W_{(g),1})=0$, and hence $\delta$ is zero trivially.
For $g=1$ and $m$ even one again checks easily that $\delta$ is not zero and hence injective.

Finally we turn to $\gr^1\GCex_{(g)}$. 
This complex is spanned by the graphs $\Gamma_{\alpha\beta\gamma}$ for $\alpha,\beta,\gamma\in H_m(W_g)$, and the elements $\phi_{\alpha} \in \osp_g^{nil}$.
The differential is define such that 
\[
\delta \phi_\alpha = \frac 1 2 \sum_{ij} g_{ij} \Gamma_{c_ic_j\alpha},
\]
with $\Delta_1^*=\sum_{i,j} g_{ij} c_i\otimes c_j\in H_m(W_g)\otimes H_m(W_g)$ a representation of the diagonal element in some basis $\{c_i\}$ of the homology.
The space $\osp_g^{nil}$ is equivalent to the defining $\OSp_g$-representation, and hence the differential is either zero or injective by Schur's Lemma.
If $g\geq 2$ the differential is non-zero as one checks on an element, and hence injective. For $g=0$ we have $\gr^1\GCex_{(g)}=0$ trivially. 
For $g=1$ and $m$ odd all $\Gamma_{\alpha\beta\gamma}$ are zero and hence the differential is zero.
For $g=1$ and $m$ even the differential is again injective as one checks by explicit computation.

Overall we have the following table of cohomologies for $m$ odd, as $\Sp(2g)$-representations 
\begin{center}
\begin{tabular}{l|cccc}
& $g=0$ & $g=1$ & $g=2$ & $g\geq 3$  \\
\hline 
$\gr^1H(\GC_{(g),1}^\tp)$
 & 0 & $V(\lambda_1)[m-2]$ & 0 & $V(\lambda_3)[m-1]$  
\\ $\gr^1H(\GC_{(g),1})$
& 0 & 0 & $V(\lambda_1)[m-1]$ & $(V(\lambda_1)\oplus V(\lambda_3))[m-1]$
\\ $\gr^1H(\GCex_{(g)})$
 & 0 & $V(\lambda_1)[m]$ & 0 & $V(\lambda_3)[m-1]$  
\end{tabular}
\end{center}
Here $V(\lambda)$ denotes a representation of highest weight $\lambda$, and $\lambda_1, \lambda_2,\dots$ is a system of fundamental weights.

For $m$ even we similarly have the following table.
\begin{center}
  \begin{tabular}{l|cc}
    & $g=0$ & $g\geq 1$  \\
\hline 
$\gr^1H(\GC_{(g),1}^\tp)$
 & 0 & $V(3\lambda_1)[m-1]$ 
\\ $\gr^1H(\GC_{(g),1})$
 & 0 & $(V(\lambda_1) \oplus V(3\lambda_1))[m-1]$
\\ $\gr^1H(\GCex_{(g)})$
 & 0 & $V(3\lambda_1)[m-1]$
  \end{tabular}
\end{center}
Here one shall remark that providing highest weights only describes a representation of $\mathit{SO}(g,g)$ a priori, but the representations here extend to $O(g,g)$.
Concretely, $V(\lambda_1)\cong V_g$ shall be the defining $2g$-dimensional representation of $O(g,g)$ and $V(3\lambda_1)$ the kernel of the canonical map 
\[
S^3 V(\lambda_1) \to V(\lambda_1)
\]
defined by contraction with the pairing.

In all cases the canonical maps $\GC_{(g),1}^\tp\to \GC_{(g),1}\to \GCex_{(g)}$ send the summands $V(\lambda_3)$ (resp. $V(3\lambda_1)$) isomorphically onto themselves, as the corresponding cohomology classes are always represented by graphs of the form $\Gamma_{\alpha\beta\gamma}$ that exist in all complexes.

\section{Comparing $\GC_{(g),1}$ and $\GC_{(g)}$, and the proof of part (ii) of Theorem \ref{thm:GC comparison}}

The action of $\GCex_{(g)}$ on $\Graphsg(1)$ by (co)derivations yields a morphism of dg Lie algebras
\[
\GCex_{(g)}=\osp_g^{nil}\ltimes \GC_{(g)} \rightarrow \mathrm{Der}(\Graphsg(1)).
\]
We consider the morphism induced by the action on the coaugmentation
\[
f:\GCex_{(g)}  \rightarrow \icgg(1)[1].
\]
The map is given as follows:
\begin{itemize}
    \item Elements $\Gamma \in \GC_g$ act on the ``empty'' graph with one external vertex and no internal vertices or edges. Namely, we sum over all ways of connecting an $\omega$-decoration to the external vertex.
    \item If $\phi_{c_i}\in \osp_g^{nil}$ is an element that sends $1\in H_0(W_g)$ to $c_i \in H_m(W_g)$ then we map it to the graph with one external vertex and decoration $c_i$.
\end{itemize}
Note that hence elements of the dg Lie subalgebra $\GC_{(g),1}$ preserve the coaugmentation, since none of the vertices carry $\omega$-decorations. 


\begin{prop}
The total complex of the short exact sequence
\[
\GC_{(g),1} \rightarrow \GCex_{(g)} \xrightarrow{f} \icgg(1)[1]
\]
is acyclic.
\end{prop}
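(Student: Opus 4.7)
The plan is to show that $f$ descends to a quasi-isomorphism from the quotient $Q := \GCex_{(g)} / \GC_{(g),1}$ onto $\icgg(1)[1]$. Since the inclusion $i: \GC_{(g),1} \hookrightarrow \GCex_{(g)}$ is injective as a map of graded vector spaces, this is equivalent to the claimed acyclicity of the total complex of the three-term sequence (i.e.\ the assertion that the sequence assembles into a distinguished triangle).

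First I verify that $f \circ i = 0$: elements of $\GC_{(g),1}$ carry no $\omega$-decorations, and the description of $f$ on the $\GC_{(g)}$-summand is given by summing over $\omega$-decorations converted into edges to the external vertex, while the $\osp_g^{nil}$-summand is not in the image of $i$. Thus $f$ descends to $\bar f: Q \to \icgg(1)[1]$, and the quotient $Q$ decomposes as $\osp_g^{nil}$ together with (equivalence classes of) graphs carrying at least one $\omega$-decoration. On $Q$ the induced differential drops the $\osp_g^{nil}$-component of $\delta \phi_c = \tfrac12 \Gamma_{c^*\Delta_1^*}$, since this term is $\omega$-free and hence represents the zero class.

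Next I introduce a compatible filtration. Define a combinatorial degree $\nu$ on $Q$ as the number of $\omega$-decorations plus the $\osp_g^{nil}$-generator count, and on $\icgg(1)[1]$ as the sum of internal $\omega$-decorations, decorations on the external vertex, and edges emanating from the external vertex. A direct inspection shows that $\bar f$ preserves $\nu$, that $\delta_{split}$ and $\delta_{glue}$ preserve $\nu$ on both sides, and that the piece $\delta_Z$ acting on \emph{internal} $\omega$-decorations (together with the induced differential on the $\osp_g^{nil}$-summand in $Q$) strictly decreases $\nu$. Consequently, on the associated graded with respect to $\nu$, only $\delta_{split}$, $\delta_{glue}$, and the restriction $\delta_Z|_{\mathrm{ext}}$ of $\delta_Z$ to external $\omega$-decorations survive.

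I then filter further by the number $d$ of external edges plus external decorations. On the resulting bigraded level, $\bar f$ identifies configurations of $\omega$-decorations (or $\osp_g^{nil}$-generators) on the source with configurations of edges and decorations on the external vertex on the target, and the only remaining differential is $\delta_{split} + \delta_{glue}$ acting on the internal structure. A polarization-type argument, together with the fact that $\delta_Z|_{\mathrm{ext}}$ converts external $\omega$-decorations into external edges and so acyclifies a small Koszul-type subcomplex, then shows that $\bar f$ becomes a quasi-isomorphism on this bigraded page. Since every weight piece is finite-dimensional and the filtrations are bounded within each weight, the spectral sequence converges and acyclicity on the bigraded lifts to acyclicity of the cone of $\bar f$, hence of the total complex of the original sequence.

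The main obstacle is controlling the portion of $\icgg(1)[1]$ consisting of graphs with two or more edges or decorations at the external vertex, which lie outside the chain-level image of $\bar f$. These must be shown to be acyclic on the associated graded via the external piece $\delta_Z|_{\mathrm{ext}}$, which produces external edges from external $\omega$-decorations. Verifying the required cancellations with the correct signs and combinatorial prefactors is the principal technical step.
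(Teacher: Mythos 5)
There is a genuine gap. Your overall strategy runs parallel to the paper's: both filter so that on the associated graded the only surviving interaction between the two sides is the conversion of $\omega$-decorations into edges at the external vertex (resp.\ of $\osp_g^{nil}$-generators into external decorations), and both then need to show that the resulting complex is acyclic away from $\GC_{(g),1}$. But that last acyclicity is exactly the content of the proposition, and you do not prove it: you defer it to ``a polarization-type argument'' and to $\delta_Z|_{\mathrm{ext}}$ ``acyclifying a small Koszul-type subcomplex,'' and you explicitly flag the graphs with two or more edges or decorations at the external vertex as an unresolved obstacle. The paper closes precisely this point with an explicit homotopy $h$ on $\gr(\GC_{(g)}\oplus \icgg^\bullet(1))$ that replaces an edge incident to the external vertex by an $\omega$-decoration at its internal endpoint; one then computes $(\gr d\, h + h\,\gr d)(\Gamma) = N(\Gamma)\Gamma$ with $N(\Gamma)$ the number of $\omega$-decorations plus edges at the external vertex, so the cohomology is spanned by graphs with neither, i.e.\ by $\GC_{(g),1}$. (The zero-internal-vertex part $\osp_g^{nil}\oplus\icgg^0(1)$ is handled by a separate two-line check.) Without such a homotopy, or an equivalent Koszul-complex identification carried out in detail, your argument does not establish the statement.

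A secondary inaccuracy: your claim that $\delta_{split}$ preserves $\nu$ on the target is false for splittings of the \emph{external} vertex. Since the new internal vertex must be at least trivalent, at least two half-edges or decorations migrate off the external vertex while only one new external edge is created, so external splitting strictly decreases your $\nu$. This changes which terms survive to your associated graded and would need to be corrected before the rest of the argument could be assembled. Note also that the paper avoids comparing two spectral sequences (one for $Q$, one for $\icgg(1)[1]$) by filtering the mapping cone directly, with the filtration on the cone chosen (internal vertices plus \emph{internal} edges) so that the connecting map itself survives to the associated graded; if you pursue your quotient formulation you must still verify compatibility of your filtrations with $\bar f$ and convergence, which is extra bookkeeping for no gain.
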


\begin{proof}
We show the equivalent claim that the inclusion
\[
\GC_{(g),1} \rightarrow  \textrm{cone}(f)=\GCex_{(g)} \oplus \icgg(1)[1][-1]
\]
is a quasi-isomorphism. Denote by $\icgg^\bullet(1) \subset \icgg(1)$ the subcomplex consisting of diagrams having at least one internal vertex, and by $\icgg^0(1)$ the subspace spanned by diagrams with no internal vertices. The latter does not define a subcomplex. Notice moreover that $f(\GCg)\subset \icgg^\bullet(1)[1]$, while $f(\osp^{nil}_g)\subset \icgg^0(1)[1]$. Next, we equip $\GC_{(g),1}$ with the complete descending filtration given by
\[
\# \text{ internal vertices } + \# \text{ edges } \geq p.
\]
and similarly the mapping cone with the complete descending filtration
\[
\# \text{ internal vertices } + \# \text{ internal edges } \geq p
\]
where by an internal edge we mean an edge connecting two internal vertices. On the level of the associated graded complexes we have, on the one hand, $(\gr \GC_{g,1},0)$, whereas $\gr \mathrm{cone}(f)$ splits into the direct sum of subcomplexes
\[
\gr \mathrm{cone}(f)\cong \gr(\osp^{nil}_g \oplus \icgg^0(1)) \oplus \gr (\GCg \oplus \icgg^\bullet(1)).
\]
On the first subcomplex the differential has two parts. The internal differential on $\icgg^0 (1)$ which maps
\[
\begin{tikzpicture}
  \node[ext,label={$\omega$}] (v) at (0,0) {$1$};
\end{tikzpicture}
 \ \ \mapsto
\begin{tikzpicture}
  \node[ext] (v) at (0,0) {$1$};
  \draw (v) edge [out=120, in=60, loop, looseness=10] (v);
\end{tikzpicture}
\]
and the one induced by $f$ given by
\[
\osp_g^{nil}\ni(\phi_{c_i}:1\mapsto c_i) \mapsto
\begin{tikzpicture}
  \node[ext,label={$c_i$}] (v) at (0,0) {$1$};
\end{tikzpicture}
\]
As such the subcomplex $\gr (\osp^{nil}_g \oplus \icgg^0(1))$ is acyclic. On the other hand, notice that on $ \gr (\GCg \oplus \icgg^\bullet(1))$ the differential is defined by replacing $\omega$-decorations with an edge connected to the external vertex.

\[
\gr d:
\begin{tikzpicture}
  \node[int,label=0:{$\omega$}] (v) at (0,0) {};
  \node[ext] (w) at (1,-0.5) {$1$};
  \draw (v) edge +(-.5,.5) edge +(.5,.5) edge +(0,.5);
  \draw (w) edge +(.5,.5) edge +(0,.5);
\end{tikzpicture}
\ \mapsto \ 
\begin{tikzpicture}
  \node[int] (v) at (0,0) {};
  \node[ext] (w) at (1,-0.5) {$1$};
  \draw (v) edge +(-.5,.5) edge +(.5,.5) edge +(0,.5);
  \draw (w) edge +(.5,.5) edge +(0,.5);
  \draw (v) edge (w);
\end{tikzpicture}
\]
This differential has an obvious homotopy
\[
h:\GCg \oplus \icgg^\bullet(1)\rightarrow \GCg \oplus \icgg^\bullet(1)
\]
given by replacing edges connected to the external vertex by an $\omega$-decoration at the respective internal vertex. It satisfies 
\[
(\gr d h+h \gr d)(\Gamma)=N(\Gamma) \Gamma
\]
where $N(\Gamma)$ is the number of $\omega$-decorations plus the number of edges incident at the external vertex. In particular this means that the cohomology is spanned by diagrams with no $\omega$-decorations and no edges connected to the external vertex, i.e. diagrams in $\GC_{(g),1}\subset \GCg\oplus \icgg^\bullet(1)$.
\end{proof}

We obtain a long exact sequence on cohomology groups
\begin{equation}\label{equ:GC long exact icg}
  \cdots \to H^k(\GC_{(g),1}) \to  H^k(\GCex_{(g)} )\xrightarrow{H(f)} H^{k+1}(\icgg(1)) \to H^{k+1}(\GC_{(g),1}) \to \cdots.
\end{equation}

\begin{lemma}\label{lem:center icg}
The image of
\[
H(f):H^{\bullet}(\GCex_{(g)})\rightarrow H^{\bullet+1}(\icgg(1))
\]
lies in the center of the graded Lie algebra $H(\icgg(1))$.
\end{lemma}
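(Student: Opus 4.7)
The plan is to show that for cocycles $x \in \GCex_{(g)}$ and $y \in \icgg(1)$, the bracket $[f(x),y]_{\icgg(1)}$ is a coboundary in $\icgg(1)$, by exhibiting an explicit witness drawn from the coderivation action $\rho$ of $\GCex_{(g)}$ on $\Graphsg(1)$. Recall from the construction preceding the lemma that $f(x) = \rho(x)(\mathbf{1})$, and this lies in $\icgg(1)[1]$ because $\rho(x)$ is a coderivation and the coaugmentation $\mathbf{1}$ is grouplike (so its image under any coderivation is primitive).

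First I would use the identification $\Graphsg(1) = \Bar(\icgg(1)) = S^c(\icgg(1)[1])$ to decompose the coderivation $\rho(x)$ into its components $\alpha_n\colon S^n(\icgg(1)[1]) \to \icgg(1)[1]$ along the cogenerators. By construction $\alpha_0 = f(x)$, and writing $\rho_1(x) := \alpha_1$, the standard formula for a coderivation applied to a single cogenerator gives
\[
\rho(x)(y) \;=\; \rho_1(x)(y) \;+\; f(x) \cdot y
\]
for every $y \in \icgg(1)[1]$, with the first summand lying in $\icgg(1)[1]$ and the second in $S^2(\icgg(1)[1])$; importantly, no higher components $\alpha_n$ enter this expression.

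Next I would exploit that $\rho(x)$ commutes with the differential $\partial_{\Graphsg(1)}$, which holds since $x$ is a cocycle and $\rho$ is a morphism of dg Lie algebras. Applying the relation $[\partial_{\Graphsg(1)},\rho(x)]=0$ to $y$ and comparing components along the cofree cocommutative decomposition, the $S^2$-part yields (a restatement of) the fact that $f(x)$ is a $d_{\icgg(1)}$-cocycle, while the $S^1$-part, using that the $S^2 \to S^1$ piece of $\partial_{\Graphsg(1)}$ is the Lie bracket on $\icgg(1)[1]$, produces the identity
\[
d_{\icgg(1)}\rho_1(x)(y) \;-\; \rho_1(x)(d_{\icgg(1)}y) \;+\; [f(x),y]_{\icgg(1)} \;=\; 0.
\]
For $y$ a cocycle this rearranges to $[f(x),y]_{\icgg(1)} = -d_{\icgg(1)}\rho_1(x)(y)$, the desired explicit coboundary witness.

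The main point requiring care is to verify that possible higher $L_\infty$-components of the action (which arise in principle from e.g.\ $\delta_{\mathrm{split}}$ acting at the external vertex and merging several internally connected components) do not disturb the computation above. This is automatic, since evaluating a coderivation on a single cogenerator only couples to its $\alpha_0$ and $\alpha_1$ components, and any higher $L_\infty$-brackets on $\icgg(1)$ itself live in $S^{\geq 3}$ and so contribute neither to the $S^1$-projection nor to the $S^2$-projection used above.
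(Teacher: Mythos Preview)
Your argument is correct and is essentially the same as the paper's, just packaged in the abstract language of coderivations rather than graphically. Both produce the same homotopy witness: your $\rho_1(x)(y)$ is precisely the paper's $\pi(\gamma\cdot\Gamma)$, the internally connected piece of the gluing action. The paper derives the key identity from $\pi\, d^2(\gamma\sqcup\Gamma)=0$ in the enlarged complex $\fGraphs_{(g)}(1)$, whereas you extract it from the equivalent relation $[\partial_{\Graphsg(1)},\rho(x)]=0$ in $\Graphsg(1)$ itself. One small advantage of your formulation is that it treats a closed element of $\GCex_{(g)}$ uniformly, including a possible $\osp_g^{nil}$-component, while the paper's version is phrased for a closed $\gamma\in\GCg$ only.
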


\begin{proof}
Let $\gamma \in \GCg$ and $\Gamma \in \icgg(1)$ be closed elements. Consider the disjoint union
\[
\gamma \sqcup \Gamma \in \fGraphs_{(g)}(1)
\]
where $\fGraphs_{(g)}(1)$ is defined analogously to $\Graphs_{(g)}$, except that we drop the connectivity condition. Take the projection $\pi:\Graphsg(1)\rightarrow \icgg(1)[1]$ onto the internally connected part of the equation $0=d^2(\gamma \sqcup \Gamma)$. We obtain
\[
0=\pi d^2(\gamma \sqcup \Gamma)=d_{\icgg} (\pi(\gamma \cdot \Gamma))+ [f(\gamma),\Gamma].
\]
Notice, that $\gamma\cdot \Gamma$ lies in $\Graphsg(1)$, and we again project onto its internally connected part before applying the differential. In particular, $[f(\gamma),\Gamma]=0 \in H(\icgg(1))$ for all closed $\Gamma$, and thus $f(\gamma)$ lies in the center of $H(\icgg(1))$.
\end{proof}

\newcommand{\wg}{\mathfrak w}
\newcommand{\hwg}{\hat{\mathfrak w}}
\subsection{Rational homotopy Lie algebra of $W_{g}$}

We define the graded Lie algebra $\wg_g$ to be the Koszul dual Lie algebra to the graded commutative algebra $H^\bullet(W_g)$.
Concretely, $\wg_g$ has the following presentation. It is generated as a graded Lie algebra by symbols $c_1,\dots, c_{2g}$ of degree $1-m$, corresponding to a basis of $H_m(W_g)$. Let us denote the matrix of the non-degenerate pairing with respect to the given basis by $g_{ij}$.
Then 
\[
  \wg_g = \FreeLie(c_1,\dots,c_{2g}) / (\sum_{i,j=1}^{2g} g_{ij} [c_i,c_j] =0).
\]
It is clear that $\wg_g$ is the quadratic dual to the quadratic commutative algebra $H^\bullet(W_g)$. To see that it is the Koszul dual it suffices to show that either $H^\bullet(W_g)$ or $U(\wg_g)$ is a Koszul algebra. Note that $U(\wg_g)$ is a quadratic algebra with generators $c_i$ and the single relation 
\[
\sum_{i,j=1}^{2g} g_{ij} [c_i,c_j] =0
\]
and it is Koszul by \cite[Theorem 4.1.1]{lodayval} since in particular there are no critical monomials on which to test the confluence condition.

Similarly, we also consider a central extension of this graded Lie algebra with a central element $c$ of degree $2-2m$.
\[
  \wg_g^{fr} = \FreeLie(c, c_1,\dots,c_{2g}) / (\sum_{i,j=1}^{2g} g_{ij} [c_i,c_j] - (2+(-1)^m 2g) c =[c,c_1]=\cdots = [c,c_{2g}]=0).
\]
This is a Quillen model for the sphere bundle $STW_g$. One obviously has a map of graded Lie algebras
\[
  \wg_g^{fr} \to \wg_g  
\]
by setting $c=0$.
Both Lie algebras above carry an additional grading, assigning the generators $c_i$ degree $+1$ and the generator $c$ degree $+2$.
We denote by $\hwg_g^{fr}$ and $\hwg_g$ the respective completions.
(They only differ from $\wg_g^{fr}$ and $\wg_g$ in the case $m=1$, in fact.)

We shall need the following result, which can be deduced from \cite[Proposition A']{AsadaKaneko} (see also \cite[Lemma 74]{Matteo} for a different proof).
\begin{prop}[{\cite[Proposition A']{AsadaKaneko}}]\label{prop:center wg}
For $g\geq 2$ the center of $\wg_g$ is trivial, and the center of $\wg_g^{fr}$ is one-dimensional, spanned by $c$.
\end{prop}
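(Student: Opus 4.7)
The plan is to reduce the first statement to the cited classical result and then deduce the second statement from the central extension structure. For the triviality of the center of $\wg_g$ when $g\ge 2$, I would invoke \cite[Proposition A']{AsadaKaneko} directly. Their setting (the $m=1$ case) realizes $\wg_g$ as the associated graded of the lower central series of the surface group $\pi_1(\Sigma_g)$, for which the center-triviality is classical. For general $m$, the presentation of $\wg_g$ differs only by a shift of the internal degree of the generators $c_i$: the parity of the graded bracket and of the pairing $g_{ij}$ always match in such a way that $\sum g_{ij}[c_i,c_j]$ is a genuine nonzero element of the free graded Lie algebra, and after this reindexing the resulting quotient is isomorphic to the classical one as an ungraded Lie algebra. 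Alternatively, and more cleanly for arbitrary $m$, one may simply cite \cite[Lemma 74]{Matteo}, which treats the general case uniformly.

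The statement for $\wg_g^{fr}$ then follows formally from the first part via its defining central extension
\[
0 \longrightarrow \langle c\rangle \longrightarrow \wg_g^{fr} \longrightarrow \wg_g \longrightarrow 0,
\]
where $c$ is central by construction. The inclusion $\langle c\rangle\subseteq Z(\wg_g^{fr})$ is therefore immediate. Conversely, any central element $z\in \wg_g^{fr}$ projects to a central element of $\wg_g$, which must vanish for $g\ge 2$ by the first part; hence $z\in\langle c\rangle$, giving equality.

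The main obstacle I anticipate is purely bookkeeping: matching conventions of the classical Asada--Kaneko theorem (formulated for the surface group) to the present graded presentation when $m$ is even, where the bracket is graded-symmetric rather than antisymmetric. Since \cite[Lemma 74]{Matteo} addresses the general case directly, this translation can be avoided altogether, making the proof essentially a citation followed by the short central-extension argument above.
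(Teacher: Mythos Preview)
Your proposal is correct and matches the paper's approach: the paper does not supply its own proof but simply cites \cite[Proposition A']{AsadaKaneko} (and \cite[Lemma 74]{Matteo} as an alternative), exactly as you do. Your additional central-extension argument deducing the statement for $\wg_g^{fr}$ from that for $\wg_g$ is the standard and correct way to make the deduction explicit.
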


There is a morphism of $L_\infty$-algebras
\[
\phi:\icgg(1) \rightarrow \hwg_g^{fr}
\]
which is given as follows.
\begin{itemize}
    \item If a graph $\Gamma \in \icgg(1)$ contains either a loop, a $\geq 4$-valent vertex or an $\omega$-decoration, then $\phi(\Gamma) = 0$.
    \item If $\Gamma \in \icgg(1)$ is a trivalent tree with no $\omega$-decorations, it can be interpreted as a Lie tree and determines an element in the free Lie algebra $L_\Gamma \in \FreeLie(c_1, \dots, c_{2g}) \to \hwg_g^{fr}$. In that case $\phi(\Gamma) = [L_\Gamma] \in \hwg_g^{fr}$.
\end{itemize}

\begin{lemma}\label{lem:icgwg}
The morphism $\phi:\icgg(1) \rightarrow \hwg_g^{fr}$ defines a quasi-isomorphism.
\end{lemma}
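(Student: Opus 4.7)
The strategy is Koszul duality: we recognize both $\icgg(1)$ and $\hwg_g^{fr}$ as Lie algebra models for the rational homotopy type of the unit sphere bundle $STW_g$, with $\phi$ realizing the canonical identification. Recall from the preceding material that the bar construction $\Bar(\icgg(1)) = \Graphsg(1)$ is quasi-isomorphic to $C_\bullet(W_g)$, which is itself a chain model for the homology $H_\bullet(STW_g)$. On the other hand, $\hwg_g^{fr}$ has a quadratic presentation whose Koszul dual commutative algebra matches $H^\bullet(STW_g)$: the defining relation $\sum g_{ij}[c_i,c_j] = (2+(-1)^m 2g)\, c$ is precisely the Lie-algebra dual of the Euler class relation in cohomology via the Gysin sequence of the sphere bundle.

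\paragraph{Execution.} I would proceed in three steps. First, verify that $\hwg_g^{fr}$ is Koszul. The universal enveloping algebra $U(\hwg_g^{fr})$ is a quadratic associative algebra with one non-trivial relation (and centrality of $c$); Koszulness follows from a PBW/Gr\"obner basis argument analogous to the one given in the excerpt for $U(\wg_g)$, the single critical monomial to test being confluent. Equivalently this establishes Koszulness of the commutative algebra $H^\bullet(STW_g)$, so $\Bar(\hwg_g^{fr})$ is quasi-isomorphic (as a dg cocommutative coalgebra) to $H^\bullet(STW_g)^c \simeq C_\bullet(W_g)$. Second, transport weights: under the quasi-isomorphism $C_\bullet(W_g) \xrightarrow{\sim} \Graphsg(1) = \Bar(\icgg(1))$, the homological classes $a_i, b_i$ receive weight $1$, the top class $\omega$ weight $2$, and the tadpole-class $\theta$ also weight $2$, matching the weights on $\Bar(\hwg_g^{fr})$ where $c_i$ contributes weight $1$ and $c$ weight $2$. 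Third, apply Proposition \ref{prop:cohom conc follows koszul}: since $\icgg(1)$ is weight-graded with finite-dimensional weight-graded pieces and $\gr^W H(\Bar(\icgg(1)))$ is concentrated in the single cohomological degree matching the Koszul dual of $\hwg_g^{fr}$, the proposition yields formality of $\icgg(1)$ and identifies $H(\icgg(1))$ as the quadratic Lie algebra Koszul dual to $H^\bullet(STW_g)$, which is exactly $\hwg_g^{fr}$.

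\paragraph{Matching $\phi$.} Once the abstract isomorphism $H(\icgg(1)) \cong \hwg_g^{fr}$ is established, it remains to verify that it is induced by $\phi$. This reduces to comparing weight-$1$ generators and the quadratic relations. On the weight-$1$ level, $H(\icgg(1))$ is generated by the class of a graph with external vertex and one decoration $c_i$ (the tadpole contributes the generator $c$), and $\phi$ sends these to $c_i$ and $c$ by definition. The quadratic relations on both sides are read off from the kernel of the multiplication map $\gr^1 \otimes \gr^1 \to \gr^2$ on the bar side; the tree-level $\phi$ realizes this identification because the defining IHX/Jacobi relations among Lie trees correspond precisely to the quadratic Lie relations on generators, and the only non-tree weight-$2$ class—the tadpole—is precisely the image of the square-bracket combination $\sum g_{ij}[c_i, c_j]$ up to the prefactor $\chi = 2+(-1)^m 2g$ arising from the computation $d^2 = 0$ modulo tadpoles earlier in the paper.

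\paragraph{Main obstacle.} The hardest step is the second one: identifying the weight grading on $C_\bullet(W_g)$ with the one on $\Bar(\hwg_g^{fr})$ and verifying the cohomological concentration hypothesis of Proposition \ref{prop:cohom conc follows koszul} for $\icgg(1)$ uniformly in $W$. In particular, one must check that no spurious cohomology arises in $\Graphsg(1)$ from higher-weight contributions of loopy graphs—this is exactly what Koszulness of $\hwg_g^{fr}$ provides, so the argument is genuinely circular unless Koszulness is proven independently (via the PBW argument above). The identification of the Euler-class prefactor $\chi$ on both sides is another subtle point that requires tracking signs carefully through the quasi-isomorphism $C_\bullet(W_g) \to \Graphsg(1)$.
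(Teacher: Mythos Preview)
Your strategy is in the right spirit but has a real gap: you treat $\hwg_g^{fr}$ as if it were a quadratic Lie algebra in the sense of Definition~\ref{def:Koszul} and Proposition~\ref{prop:cohom conc follows koszul}, but it is not. Those results require the generators to sit in a single weight (weight $1$) and the relations to live in $\wedge^2 V$. In $\hwg_g^{fr}$ the generator $c$ has weight $2$, and the defining relation $\sum g_{ij}[c_i,c_j]=\chi\,c$ mixes a quadratic term in the $c_i$ with a linear term in $c$; this is a \emph{quadratic--linear} (inhomogeneous) presentation, to which the paper's Koszul machinery does not apply as stated. Equivalently: if you try to generate $\hwg_g^{fr}$ from the weight-$1$ part alone, then for $\chi\neq 0$ you can solve $c=\chi^{-1}\sum g_{ij}[c_i,c_j]$, but the remaining relations (centrality of $c$) become weight-$3$ relations, so the presentation is cubic; for $\chi=0$ the element $c$ is not in the Lie subalgebra generated by the $c_i$ at all. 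Either way Proposition~\ref{prop:cohom conc follows koszul} is not directly applicable, and the sentence ``Koszulness follows from a PBW/Gr\"obner basis argument analogous to the one given for $U(\wg_g)$'' does not go through without substantial extra work (namely the inhomogeneous Koszul theory, which the paper does not set up).

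There is also a concrete error in your ``Matching $\phi$'' paragraph: by the explicit definition of $\phi$ in the paper, any graph containing a loop is sent to $0$, so the tadpole at the external vertex is \emph{not} sent to $c$. The class $c$ is reached instead as the image of the trivalent tree with a single internal vertex carrying $\Delta_1$, namely $\phi(\Delta_1\text{-tree})=\sum g_{ij}[c_i,c_j]=\chi\, c$. This only produces $c$ when $\chi\neq 0$, which already shows that your generator-matching argument cannot be uniform in $g,m$.

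The paper's proof avoids both issues by passing to bar constructions and using the one piece of Koszulness that \emph{is} available, namely that of the genuinely quadratic Lie algebra $\hwg_g$. It factors the computation of $H(\Bar\hwg_g^{fr})$ through the Hochschild--Lyndon--Serre spectral sequence of the central extension $\langle c\rangle \hookrightarrow \hwg_g^{fr}\twoheadrightarrow \hwg_g$; on the $E_2$ page one gets $H(\Bar\hwg_g)\otimes H(\Bar\langle c\rangle)\cong H(W_g)\otimes(\Q\oplus\Q\theta)$, which is exactly $C_\bullet(W_g)$. Then the composite $C_\bullet(W_g)\to\Graphsg(1)\to\Bar\hwg_g^{fr}$ is identified with this quasi-isomorphism, and bar--cobar duality finishes. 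If you repair your argument by filtering out the central $c$ and handling it via a spectral sequence, you will end up reproducing exactly this.
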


\begin{proof}
By bar-cobar duality it suffices to show that the induced morphism on the respective bar complexes 
\[
\Graphsg(1)\rightarrow \Bar (\hwg_g^{fr})
\]
is a quasi-isomorphism. To see this, first notice that the morphism on the left of the composition
\[
C_\bullet(W_{g}) \rightarrow \Graphsg(1)\rightarrow \Bar (\hwg_g^{fr})
\]
is the quasi-isomorphism \eqref{eq:BVGraphsg to CWg}. In order to prove the claim, it now suffices to show that the composition $ C_\bullet(W_{g})\to \Bar (\hwg_g^{fr}) $ is also a quasi-isomorphism. Indeed, notice that on the second page of the Hochschild-Lyndon-Serre spectral sequence, we find $H(\Bar(\hwg_g))\otimes H(\Bar(\langle c\rangle))$, where $\langle c \rangle$ denotes one-dimensional Lie algebra generated by $c$. The tensor product $H(\Bar(\hwg_g))\otimes H(\Bar(\langle c\rangle))$ is identified with $C_\bullet(W_{g})$, where we used that $H(\Bar(\hwg_g))=H(W_g)$.
\end{proof}

\begin{prop}\label{prop:long exact splits}
For $g \geq 2$, we have $H(\phi)=0$. In particular, the long exact sequence \eqref{equ:GC long exact icg} splits into
\begin{equation}\label{equ:GC split long exact icg}
  0\rightarrow \hwg_g^{fr} \to H(\GC_{(g),1}) \to  H(\GCex_{(g)} )\rightarrow 0.
\end{equation}
For $g = 1$, the sequence splits into
\begin{gather*}
  0\rightarrow \K c \to H(\GC_{(1),1}) \to  H(\GC^{\mathrm{ex}}_{(1)} )\rightarrow \K c_1 \oplus \K c_2 \rightarrow 0, \quad\text{if $m$ is odd,}\\
0\rightarrow \K c \oplus \K c_1 \oplus \K c_2 \to H(\GC_{(1),1}) \to  H(\GC^{\mathrm{ex}}_{(1)} )\rightarrow \K [c_1,c_1] \oplus \K [c_2,c_2] \rightarrow 0 , \quad\text{if $m$ is even.}
\end{gather*}
\end{prop}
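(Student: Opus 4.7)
The plan is to leverage the long exact sequence \eqref{equ:GC long exact icg} coming from the preceding acyclicity result, combined with the identification $H(\icgg(1))\cong \hwg_g^{fr}$ of Lemma \ref{lem:icgwg}, in order to control the connecting homomorphism
\[
H(f): H^\bullet(\GCex_{(g)}) \to \hwg_g^{fr}.
\]
By Lemma \ref{lem:center icg} its image lies in the graded center $Z(\hwg_g^{fr})$, and the proposition's assertions for the various $g$ amount to explicitly determining this image (and then chasing the LES to read off the kernel).

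For $g\geq 2$, Proposition \ref{prop:center wg} identifies $Z(\hwg_g^{fr})=\K c$ as a one-dimensional subspace concentrated in cohomological degree $2-2m$ and weight $2$. To conclude $H(f)=0$ it thus suffices to rule out $c$ lying in the image. Since $H(f)$ preserves weight and shifts cohomological degree by $+1$, the only possible source is the bigraded piece $\gr^2 H^{1-2m}(\GCex_{(g)})$, which vanishes by Theorem \ref{thm:main cohom GC vanishing}(ii) (equivalently \cite[Theorem 82]{Matteo}), since $1-2m<(1-m)\cdot 2=2-2m$. The long exact sequence \eqref{equ:GC long exact icg} then immediately splits into the claimed short exact sequence.

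For $g=1$ the center is larger and one computes explicitly according to the parity of $m$. When $m$ is odd, the coefficient $2+(-1)^m\cdot 2$ vanishes so the defining relation becomes $[c_1,c_2]=0$ and $\hwg_1^{fr}\cong \K c\oplus \K c_1\oplus \K c_2$ is abelian. Combining the weight-one cohomology of $\GCex_{(1)}$ tabulated in Section \ref{sec:weight 1 comparison} with the explicit formula for $f$ (so that $f(\phi_{c_i})$ represents $c_i\in \hwg_1^{fr}$) identifies the image of $H(f)$ as $\K c_1\oplus \K c_2$ and leaves $\K c$ in the kernel of $H(\GC_{(1),1})\to H(\GCex_{(1)})$. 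When $m$ is even the relation reads $[c_1,c_2]=4c$ together with $[c,c_i]=0$, making $[c_1,c_2]$ central in $\hwg_1^{fr}$; a graded-Jacobi computation then shows $[c_i,c_i]$ is also central. The analogous weight-two analysis produces the cokernel $\K[c_1,c_1]\oplus \K[c_2,c_2]$ and the corresponding kernel $\K c\oplus \K c_1\oplus \K c_2$ via the preceding LES segment.

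The main obstacle is to ensure that the vanishing invoked for $g\geq 2$ does not circularly depend on the present proposition; the cited \cite[Theorem 82]{Matteo} is the intended reference. The $g=1$ computations require careful tracking of graded signs and of the $\OSp_1$-representation structure at weights one and two but involve no new conceptual input beyond the weight-one material already in place.
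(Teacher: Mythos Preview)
Your approach for $g\geq 2$ is genuinely different from the paper's: you argue that the only possible target for $H(f)$ in the center $\K c$ cannot be hit because the source $\gr^2 H^{1-2m}(\GCex_{(g)})$ vanishes. The paper instead shows directly that the connecting homomorphism $\hwg_g^{fr}\to H(\GC_{(g),1})$ is injective on $\K c$, by writing down an explicit representative for $c$ in $\icgg(1)$, computing its image as a concrete graph in $\GC_{(g),1}$, and verifying non-vanishing via the map \eqref{eq:GCg1 to Freelie} into $\Der(\wgeis)$. The paper's route is entirely self-contained, while yours leans on a vanishing statement whose proof \emph{in this paper} goes through Theorem~\ref{thm:GC comparison}(ii), which is precisely the $g\geq 2$ short exact sequence you are trying to establish --- so the circularity you flag is not hypothetical, it is real within the paper's logical structure. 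Whether \cite[Theorem~82]{Matteo} breaks the loop depends on that result being proved independently of the present comparison; the paper only says it is ``partially contained'' there, so you would have to go and check that the specific case $(W,k)=(2,1-2m)$ is covered for all $m$ and all $g\geq 2$. The paper's direct computation avoids this issue and, as a bonus, sets up the explicit description of the connecting homomorphism on generators that is reused in the subsequent proposition.

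Your $g=1$ discussion has a genuine gap. You correctly identify elements that \emph{are} in the image of $H(f)$ (the $c_i$ for $m$ odd, and $[c_i,c_i]$ for $m$ even), but you never argue that the remaining central elements --- $c$ in both parities, and additionally $c_1,c_2$ for $m$ even --- are \emph{not} in the image. Since for $g=1$ the Lie algebra $\hwg_1^{fr}$ is abelian (or nearly so), Lemma~\ref{lem:center icg} gives no constraint, and nothing in the weight-one tables of Section~\ref{sec:weight 1 comparison} addresses the weight-two source needed to hit $c$. The paper fills this gap by the same explicit computation: it writes down the image of $c$ (and of $c_1,c_2$ when $m$ is even) under the connecting homomorphism and checks non-vanishing in $H(\GC_{(1),1})$ via the derivation map. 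Without an argument of this kind, your claimed identification of the image and kernel for $g=1$ is an assertion, not a proof.
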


\begin{proof} Let us first treat the case $g \geq 2$. By Lemma \ref{lem:center icg} and Proposition \ref{prop:center wg} it suffices to show that the connecting homomorphism $g:\hwg_g^{fr}\rightarrow H(\GC_{(g),1})$ is injective on the center $Z(\hwg_g^{fr})=\K c$. The central element $c$ has the following representative in $H(\icgg(1))$
\[
\begin{tikzpicture}
  \node[int,label={$\Delta_1$}] (v) at (0,0) {};
  \node[ext] (w) at (0,-.5) {$1$};
  \draw (v) edge (w);
\end{tikzpicture}
\]
where $\Delta_1 \in H_m(W_g)^{\otimes 2}$ is the diagonal element. The connecting homomorphism maps this element to
\[
\begin{tikzpicture}
  \node[int,label=left:{$\Delta_1$}] (v) at (0,0) {};
  \node[int,label=right:{$\Delta_1$}] (w) at (0.5,0) {};
  \draw (v) edge (w);
\end{tikzpicture}
\in \GC_{(g),1}.
\]
Consider the natural map 
\begin{equation}\label{eq:GCg1 to Freelie}
H(\GC_{(g),1})\rightarrow \mathrm{Der}(\FreeLie(\bar H(W_{g,1})))
\end{equation}
given by projecting onto the trivalent tree part before performing the glueing operation onto any Lie tree. The element above is sent to the inner derivation determined by
\[
2 \sum_{i,j=1}^{2g} g_{ij} [c_i,c_j]
\]
and is hence non-zero in $H(\GC_{(g),1})$. Note that the last statement also holds for $g=1$. The argument works similarly for the other cases in $g=1$. In this case, $\hwg_g^{fr}$ is finite-dimensional and given by
\[
\hwg_g^{fr}=
\begin{cases}
  \K c \oplus \K c_1 \oplus \K c_2 & \text{ for } m \text{ odd, }\\
  \K c \oplus \K c_1 \oplus \K c_2 \oplus \K [c_1,c_1] \oplus \K [c_2,c_2] & \text{ for } m \text{ even. }
\end{cases}
\]
For $m$ odd, $\osp^{nil}_g$ is two-dimensional, both generators are closed in $\GCex_{(g)}$ and their image is precisely $\K c_1 \oplus \K c_2$. For $m$ even, $H(f)$ maps the closed graphs
\[
\begin{tikzcd}
\node[int,label=90:{c_1 c_1 \omega}] {};
\end{tikzcd}
\text{ and }
\begin{tikzcd}
\node[int,label=90:{c_2 c_2 \omega}] {};
\end{tikzcd}
\in \GC_{(g)}
\]
to $[c_1,c_1]$ and $[c_2,c_2]$, respectively, while the connecting homomorphism sends $c_1$ and $c_2$ to 
\[
  \begin{tikzpicture}
    \node[int,label=90:{$c_1 c_1 c_2$}] {};
  \end{tikzpicture}
\text{ and }
    \begin{tikzpicture}
    \node[int,label=90:{$c_2 c_2 c_1$}] {};
  \end{tikzpicture}
\]
respectively. Under the natural map \eqref{eq:GCg1 to Freelie} the latter are sent to the derivations of $\FreeLie(\bar H(W_{1,1}))$
\begin{align*}
c_1 &\mapsto 
\begin{cases}
  c_1 & \mapsto [c_1,c_1]\\
  c_2 & \mapsto 2[c_1,c_2].
\end{cases} & c_2 &\mapsto 
\begin{cases}
  c_1 & \mapsto 2[c_1,c_2]\\
  c_2 & \mapsto [c_2,c_2]
\end{cases} 
\end{align*}
which shows that $\K c \oplus \K c_1 \oplus \K c_2$ maps injectively into $H(\GC_{(1),1})$.
\end{proof}

The connecting homomorphism sends generators $c_j$ of $\hwg_g^{fr}$ to 
\[
  \begin{tikzcd}
    \node[int,label=90:{c_j\Delta_1}] {};
  \end{tikzcd}
  \in \GC_{(g),1},
\]
with $\Delta_1\in H_m(W_{g,1})^{\otimes 2}$ being the diagonal element. This completely determines it by the following. 
\begin{prop}
The connecting homomorphism $\hwg_g^{fr} \rightarrow H(\GC_{(g),1})$ is a homomorphism of Lie algebras.
\end{prop}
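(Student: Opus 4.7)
The plan is to exploit the quadratic presentation of $\hwg_g^{fr}$: generators $c,c_1,\dots,c_{2g}$ modulo the centrality relation $[c,c_i]=0$ and the relation $\sum_{i,j} g_{ij}[c_i,c_j]=(2+(-1)^m 2g)c$. First I would define a candidate Lie algebra homomorphism $\widetilde{\partial}\colon\hwg_g^{fr}\to H(\GC_{(g),1})$ by extending the generator assignment $c_i\mapsto \partial c_i$ and $c\mapsto\partial c$, and checking that the two defining relations are satisfied in $H(\GC_{(g),1})$. The centrality relation $[\partial c,\partial c_i]=0$ would be verified by producing an explicit three-vertex graph whose differential in $\GC_{(g),1}$ realises this bracket as a coboundary. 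The quadratic relation $\sum_{i,j} g_{ij}[\partial c_i,\partial c_j]=(2+(-1)^m 2g)\partial c$ requires enumerating the nine pairings of the three decorations on each of the single-vertex graphs representing $\partial c_i$ and $\partial c_j$, applying the trace identity $\sum_{i,j} g_{ij}g^{ij}=2g$ together with the $(-1)^m$ sign governing the symmetry of $\Delta_1$, and identifying the non-coboundary part of the sum with the appropriate multiple of the dumbbell $\Gamma_c=\partial c$.

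With $\widetilde{\partial}$ in hand, the next step is to show that it coincides with $\partial$. Both maps are $\OSp_g$-equivariant and preserve the weight grading, and they agree on the weight-$1$ generators $c_i$ as well as on the central weight-$2$ generator $c$. On the remaining weight-$2$ subspace (the $\OSp_g$-complement of the invariant line in $\wedge^2 V_g$ for $m$ odd, respectively $S^2 V_g$ for $m$ even), it suffices by Schur's lemma to match both maps on a single representative pair $(c_i,c_j)$. I would compute $\partial[c_i,c_j]$ directly at the chain level: a natural cocycle representative of $[c_i,c_j]\in H(\icgg(1))\cong\hwg_g^{fr}$ is the tree consisting of one external vertex connected by an edge to a single internal vertex decorated by $c_i$ and $c_j$; its lift under $f$ is the single-vertex graph with decorations $c_i,c_j,\omega$, and applying the differential only the $\delta_Z$-piece survives, producing the class of the two-vertex graph with decorations $c_ic_j$ and $\Delta_1$ joined by an edge. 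The same class appears in the symbolic bracket $[\partial c_i,\partial c_j]=\widetilde\partial[c_i,c_j]$ computed as in the quadratic-relation check above, modulo coboundaries.

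For weights $W\geq 3$ I would proceed by induction, using that every weight-$W$ element of $\hwg_g^{fr}$ can be written as a sum of brackets $[x,c_i]$ with $x$ of weight $W-1$. The inductive step is a parallel chain-level computation: one lifts a cocycle representative of $[x,c_i]$ in $\icgg(1)$ (obtained from a representative of $x$ by attaching a new decoration $c_i$ at a root vertex) into $\GCex_{(g)}$ via $f$, and compares $\delta$ of this lift with the graph-theoretic bracket of lifts of $x$ and $c_i$. The difference is a coboundary in $\GC_{(g),1}$, using that $\osp_g^{nil}$ acts trivially on graphs without $\omega$-decorations. Combined with the inductive hypothesis $\partial x=\widetilde\partial x$, this yields $\partial[x,c_i]=[\partial x,\partial c_i]=\widetilde\partial[x,c_i]$, completing the induction.

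The hardest step is the combinatorial verification of the quadratic relation, in which the nine pairings must be carefully tabulated and a number of terms recognised as coboundaries before the coefficient $(2+(-1)^m 2g)$ appears on the right-hand side; the weight-$2$ chain-level comparison requires a similar but somewhat lighter analysis.
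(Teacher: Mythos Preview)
Your strategy is genuinely different from the paper's: you attempt a direct computational verification using the quadratic presentation of $\hwg_g^{fr}$, whereas the paper argues structurally by exhibiting the connecting homomorphism as the natural projection from a dg Lie algebra. Concretely, the paper embeds the map $\GC_{(g),1}\to\GCex_{(g)}$ into a commutative square of dg Lie algebra morphisms whose bottom row is $\mathrm{Der}(\Bar^c\Graphsg(1))\hookrightarrow \Bar^c\Graphsg(1)[1]\rtimes\mathrm{Der}(\Bar^c\Graphsg(1))$. The cofiber of this bottom row is identified, via the adjoint action, with the dg Lie algebra $\Bar^c\Graphsg(1)\simeq \hwg_g^{fr}$, and the projection to it is then automatically a Lie map. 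This buys the conclusion without any inductive chain-level bookkeeping.

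Your route could in principle be made to work, but the proposal has a genuine gap in the inductive step. The whole content of the Proposition is the identity $\partial[x,y]=[\partial x,\partial y]$; your induction reduces this to the special case $y=c_i$, which is a legitimate reduction since the $c_i$ generate. However, the proposal does not actually carry out this special case: you assert that the difference between $\delta$ applied to a lift of $[x,c_i]$ and the bracket of lifts is a coboundary, ``using that $\osp_g^{nil}$ acts trivially on graphs without $\omega$-decorations''. But $f$ is not a map of dg Lie algebras, so there is no a priori reason why $[\eta_x,\phi_{c_i}]$ should lift $[\xi_x,\xi_i]$ under $f$. Indeed, $f(\eta)=\eta\cdot 1$ is the action on the coaugmentation, and $f([\eta_x,\phi_{c_i}])=\eta_x\cdot(\phi_{c_i}\cdot 1)-\phi_{c_i}\cdot(\eta_x\cdot 1)$ involves the action of $\phi_{c_i}$ on $\xi_x=f(\eta_x)$, not the Lie bracket in $\icgg(1)$. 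You would need to control this discrepancy explicitly for an arbitrary cocycle $\xi_x$, and nothing in the proposal indicates how. The observation about $\osp_g^{nil}$ acting trivially on $\omega$-free graphs is relevant but not sufficient, since the lift $\eta_x$ of a general $x$ will typically carry $\omega$-decorations. Without this computation, the induction does not start for $W\geq 3$, and the weight-$1$ and weight-$2$ checks, while correct, do not by themselves establish the Proposition.
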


\begin{proof}
The statement is equivalent to showing that the identification of the cocone of $\GC_{(g),1}\rightarrow \GCex_{(g)}$ with $\hwg_g^{fr}$ given in Proposition \ref{prop:long exact splits} can be made into a quasi-isomorphism of dg Lie algebras. Notice that cocones of dg Lie algebras are again dg Lie algebras and the natural projection (which is the connecting homomorphism from above) is a map of dg Lie algebras.
We have the following commuting diagram of dg Lie algebras
\[
\begin{tikzcd}
  \GC_{(g),1} \ar[r]\ar[d] & \GCex_{(g)} \ar[d]\\
  \mathrm{Der}_+(\Graphsg(1)) \ar[r]\ar[d] & \mathrm{Der}(\Graphsg(1))  \ar[d] \\
  \mathrm{Der}(\Bar^c (\Graphsg (1))) \ar[r] & \Bar^c (\Graphsg(1))[1] \rtimes \mathrm{Der}(\Bar^c (\Graphsg(1)))
\end{tikzcd}
\]
where $\mathrm{Der}_+ (\Graphsg(1))$ are derivations preserving the coaugmentation and the morphisms going from the second to third row are as in \cite[Theorem 1]{Gatsinzi}. It induces quasi-isomorphims of dg Lie algebras between the horizontal cocones. To see this, we take the cone of the morphism of the first row and the cokernel of the two other, to obtain the sequence of quasi-isomorphisms of dg vector spaces
\[
\mathrm{cone}(\GC_{(g),1}\rightarrow \GCex_{(g)}) \rightarrow \icgg(1)[1]\rightarrow \Bar^c(\Graphsg(1))[1].
\]
The cocone of the last row may be identified via the adjoint action with $\Bar^c (\Graphsg(1))$. By Lemma \ref{lem:icgwg}, $\Bar^c(\Graphsg(1))$ is quasi-isomorphic to $\hwg_{g}^{fr}$ as dg Lie algebras.
\end{proof}

In particular, this concludes the proof of part (ii) of Theorem \ref{thm:GC comparison}.
\subsection{Comparison with higher genus Kashiwara-Vergne}
Let $m=1$. We briefly recall the definition of $\krv_{g,1} \subset \wgeis)$ from \cite{AKKN}. First, they define a Lie algebra 1-cocycle 
\[
\div \colon \Der (\wgeis ) \to |T\bar H(W_{g,1}))|
\]
where $|T\bar H(W_{g,1}))|$ is the space of cyclic words in $\bar H(W_{g,1})$ (equivalently, the $0$-th Hochschild homology of the tensor algebra $T \bar H(W_{g,1}) = U(\wgeis)$). It is defined by the formula
\[
\div(u) = | \partial_i u(c_i) |
\]
where $\partial_i \colon \wgeis \to T \bar H(W_{g,1})$ is defined by the property that $a \mapsto \ad_{\partial_i a}s \colon \wgeis \to \wgeis\langle s\rangle$ is the unique derivation that sends $c_i$ to the new symbol $s$. Using this, $\krv_{g,1}$ is given by
\[
\krv_{g,1} := \left\{ u \in \Der(\wgeis) \ \Bigg| \ u \left(\sum_{i,j=1}^{2g} g_{ij} [c_i, c_j] \right) = 0 , \, \div(u) = \left|f\left(\sum_{i,j=1}^{2g} g_{ij} [c_i, c_j]\right)\right| \text{ for some $f \in \Q[\![t]\!]$} \right\}.
\]
One also defines
\[
\krv_{g,1} \supset \widehat\krv_{g,1} := \left\{ u \in \Der(\wgeis) \ \Bigg| \ u \left(\sum_{i,j=1}^{2g} g_{ij} [c_i, c_j] \right) = 0 , \, \div(u) = 0 \right\}.
\]

\begin{prop}
\label{prop:gc to kv}
The image of the morphism \eqref{eq:GCg1 to Freelie} 
\[
H^0(\GC_{(g),1})\to \mathrm{Der}(\FreeLie(\bar H(W_{g,1})))
\]
is contained in $\widehat\krv_{g,1}$. That is, there is a natural Lie algebra morphism 
\[
H^0(\GC_{g,1})\to \widehat\krv_{g,1}.
\]
\end{prop}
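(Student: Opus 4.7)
The plan is to verify the two defining conditions of $\widehat\krv_{g,1}$ for the derivation $u_\gamma$ associated to any cocycle $\gamma\in\GC_{(g),1}$: annihilation of the canonical element $r:=\sum_{i,j}g_{ij}[c_i,c_j]$, and vanishing of the divergence $\div(u_\gamma)$.

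For the first condition, I would interpret $r$ as a cohomology class. Consider the internally connected graph $T$ with a single internal vertex decorated by the diagonal element $\Delta_1\in H_m(W_{g,1})^{\otimes 2}$ and joined by one edge to the external vertex; this is a cocycle in the internally connected subcomplex $\icg^\notadp_{(g),1}(1)\subset\Graphs^\notadp_{(g),1}(1)$, since splitting its trivalent internal vertex creates a bivalent vertex (which vanishes) and gluing its two $\Delta_1$-decorations would produce a tadpole (excluded). By an argument parallel to Lemma \ref{lem:icgwg}, now with $W_{g,1}$ in place of $W_g$, I would establish a quasi-isomorphism of dg Lie algebras $\icg^\notadp_{(g),1}(1)\simeq\FreeLie(\bar H(W_{g,1}))$ under which $[T]$ corresponds to $r$. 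The $\GC_{(g),1}$-action is by dg Lie derivations compatible with this identification, hence $u_\gamma(r)$ is represented by the cocycle $\gamma\cdot T$. To conclude $u_\gamma(r)=0$, I would exhibit $\gamma\cdot T$ as a coboundary: the derivation property of the action together with $\delta\gamma=0$ and $dT=0$ forces $\gamma\cdot T$ to equal the differential of a specific graph built from the pair $(\gamma,T)$, produced by a standard splitting-versus-gluing cancellation argument.

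For the second condition, I would exploit that $u_\gamma$ depends only on the trivalent-tree part $\gamma_{\mathrm{tree}}$ of $\gamma$, since the gluing operation onto Lie trees only sees tree contributions. I plan to give $\div(u_\tau)$ a graphical interpretation for any tree $\tau$: the cyclic word $\div(u_\tau)\in|T\bar H(W_{g,1})|$ is computed by summing over ordered pairs of decorations of $\tau$, contracting them via the pairing to form a one-cycle, and reading the resulting cycle as a cyclic word in the remaining decorations. Pairs sitting at the same vertex produce tadpoles, which vanish in $\GC_{(g),1}$; pairs at distinct vertices produce one-loop graphs whose contribution is matched and cancelled by $\delta_{glue}$ applied to the one-loop part of $\gamma$, via the cocycle equation $\delta\gamma=0$.

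The main obstacle is this second step. Matching the algebraic operator $\partial_i$ and the cyclic-word convention for $\div$ with a clean combinatorial self-gluing on $\gamma_{\mathrm{tree}}$ requires careful sign bookkeeping, and the identification of loop contributions requires an exact correspondence between the cyclic-word and graph-theoretic one-loop viewpoints. If this direct combinatorial approach becomes unwieldy, I would instead enlarge $\GC_{(g),1}$ to a graph complex equipped with a natural divergence operator valued in cyclic words, so that the identity $\div(u_\gamma)=0$ becomes equivalent to closedness in the enlarged complex, and reduces to $\delta\gamma=0$ together with the vanishing of tadpoles in $\GC_{(g),1}$.
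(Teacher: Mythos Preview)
Your second step is essentially what the paper does. The paper phrases it as a spectral sequence argument: filtering by loop number, the $E_1^{0,0}$ term is trivalent trees modulo IHX (the derivations annihilating $r$), the $E_1^{1,0}$ term is trivalent one-loop graphs modulo IHX (cyclic words), and the differential $d_1$ is the self-gluing, which they identify with $\div$ exactly along the lines you sketch. So your combinatorial matching of $\div$ with the tree-to-one-loop gluing is the right idea; the spectral sequence just packages the cancellation with the one-loop part of $\gamma$ that you describe.

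Your first step, however, has a genuine gap. From the derivation property together with $\delta\gamma=0$ and $dT=0$ you obtain only that $\gamma\cdot T$ is \emph{closed}: $d(\gamma\cdot T)=(\delta\gamma)\cdot T\pm\gamma\cdot dT=0$. This says nothing about $\gamma\cdot T$ being a \emph{coboundary}, which is what you need, since $[T]=r\neq0$ in $\FreeLie(\bar H(W_{g,1}))$. There is no ``standard splitting-versus-gluing cancellation'' that produces a primitive from these hypotheses alone, and inside $\icg^\notadp_{(g),1}(1)$ no natural candidate is available, precisely because $T$ is not exact there.

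The paper's argument supplies exactly the missing primitive, by stepping outside $\icg^\notadp_{(g),1}(1)$. It uses that the $\GC_{(g),1}$-action extends to the larger model $\icgg(1)$ for $W_g$, where one has the element $[\omega]$ (the external vertex with a single $\omega$-decoration). Then $\Gamma\cdot d[\omega]=d(\Gamma\cdot[\omega])$ since $\delta\Gamma=0$, and the key point is that for $\Gamma\in\GC_{(g),1}$ the element $\Gamma\cdot[\omega]$ carries no $\omega$-decorations, hence already lies in $\icg_{(g),1}(1)$. Conceptually, this is the statement that the derivation $u_\gamma$ descends to $\wg_g=\FreeLie(\bar H(W_{g,1}))/(r)$, which forces $u_\gamma(r)$ into the ideal $(r)$; the explicit primitive in $\icg_{(g),1}(1)$ then witnesses that the class is actually zero. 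You should replace your first step with this extension-to-$W_g$ argument rather than trying to manufacture the primitive internally.
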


\begin{proof}
The first defining equation roughly follows from the fact that the action of $\GC_{(g),1}$ on $\wgeis$ extends to $\wg_g$. Graphically, one can see this as follows. Let $\Gamma \in \GC_{(g),1}$ be a closed degree zero element and let $[\omega]$ denote the element in $\icgg(1)[1]$ without internal vertices and a single decoration $\omega$. Then
\[
\Gamma. d[\omega] = d (\Gamma.[\omega]).
\]
Note that $\Gamma.[\omega]$ does not contain any $\omega$-decorations and thus the equation is in $\icggeis(1)$ which shows the first defining equation of $\widehat{\krv}_{g,1}$.

To obtain the second equation we proceed as in \cite{severa}. We consider at the spectral sequence associated to the complete descending filtration given by the number of loops. On the associated graded complex the differential is given by the vertex splitting operation. In particular, we find that the $E_1^{0,0}$ term (i.e. no loops, degree zero) consists of trivalent trees modulo the ``IHX'' relation (obtained by splitting a single vertex of valence 4). These are naturally identified with derivations $\Der(\wgeis)$ satisfying the first equation in the definition of $\krv_{g,1}$. Similarly, considering $E_1^{1,0}$ which computes the lowest-degree cohomology of one-loop graphs (with respect to the vertex splitting differential) is given by trivalent one-loop graphs modulo ``IHX''. Thus, we may assume its representatives to be given by the graphs where the edges form one ``spinal'' loop and each vertex carries a single decoration.
\[
\begin{tikzpicture}
\def\n{5}
\foreach \i in {1,...,\n}{
\node[int, label=\i*360/\n:{$c_{i_{\i}}$}] at (\i*360/\n:1) {};
\draw (\i*360/\n:1) edge ({(\i+1)*360/\n}:1);
}
\end{tikzpicture}
\]

These are thus identified up to symmetry (as in \cite{severa}) with cyclic words in $\bar H(W_{g,1})$, i.e.
\[
E_1^{1,0} \subset |T\bar H(W_{g,1}))|.
\]
Additionally, the differential $d_1:E_1^{0,0}\to E_1^{1,0}$ performs the glueing operation on the trivalent trees. This is equivalently described as follows. Mark two decorations in all possible ways. Consider their unique connecting path as the spine. Using the ``IHX'' relation, we may now rewrite the tree as a ``bird-on-a-wire'' tree (see the figure below, or \cite{NaefQin}).
\[
\begin{tikzpicture}
\def\n{5}
\foreach \i in {3,...,\n}{
\node[int, label={$c_{i_{\i}}$}] at (\i-2,0) {};
\draw (0,0) edge (\n-1,0);
}
\node[int, label=180:{$c_{i_{1}}$}, label=90:{$c_{i_{2}}$}] at (0,0) {};
\node[int, label=90:{$c_{i_{6}}$}, label=360:{$c_{i_{7}}$}] at (\n-1,0) {};
\end{tikzpicture}
\]
Finally, we connect the two marked decorations ($c_{i_1}$ and $c_{i_7}$ in the figure above). (The only difference to the direct definition of $d_1$ is the order of connecting vs. using ``IHX''.)
Marking two decorations is equivalent to first marking one decoration and then another one. Marking all $c_i$-decorations and thinking of it as a rooted tree gives the image of
\[
u(\sum_{j=1}^{2g} g^{ij} c_j),
\]
where $u \in \Der(\wgeis)$ is the induced derivation. Marking another $c_i$ decoration and using ``IHX'' to rewrite it as a ``birds-on-a-wire'' tree is then exactly the graphical description of $\partial_i$. We have thus shown that the diagram
\[
\begin{tikzcd}
  E_1^{0,0} \ar[r, "d_1"] \ar[d,"\cong"] & E_1^{1,0} \ar[d] \\
  \Der(\wgeis) \ar[r, "\div"] & \left|T\bar H(W_{g,1}))\right|,
\end{tikzcd}
\]
commutes, and we have
\[
H^0(\GC_{(g),1})\rightarrow E_2^{0,0}\rightarrow \widehat{\krv}_{g,1}
\]
which shows the claim.
\end{proof}

Next, recall Enriquez' elliptic version of the Grothendieck-Teichm\"uller Lie algebra, $\grt_1^{ell}$ \cite{Enriquez}. Elements of $\grt_1^{ell}$ are of the form $(\phi, u)$ where $\phi \in \grt_1$ and $u\in \Der(\wgeis)$ satisfying a certain set of relations. He furthermore defines a $\grt_1^{ell}$-torsor $\underline{Ell}_1(k)$, the space of elliptic associators, whose elements are again pairs $(\Phi, U)$ where $\Phi \in \underline{M}(k)$ is a Drinfeld associator and $U$ an isomorphism $U \colon \wgeis \to k \bar{\pi}(W_{1,1})$ satisfying certain relations.
On the other hand, \cite{AKKN} define a $\krv_{1,1}$-torsor $\mathrm{SolKV_{1,1}}$, the space of solutions to the Kashiwara-Vergne problem in genus $(1,1)$, whose elements are isomorphisms $U \colon \wgeis \to k \bar{\pi}(W_{1,1})$ satisfying certain relations.

\begin{thm}
The assignment $(\phi, u) \mapsto u$ defines a Lie algebra map
\[
\grt^{ell}_1 \to \krv_{1,1}.
\]
Moreover, the assignment $(\Phi, U) \mapsto U$ defines a map
\[
\underline{Ell}_1(k) \to \mathrm{SolKV}_{1,1}.
\]
\end{thm}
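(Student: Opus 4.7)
The plan is to construct the Lie algebra map $\grt_1^{ell}\to\krv_{1,1}$ in stages by combining several maps already established in this paper, then use the torsor structure to deduce the map of associators. For the $\mathfrak r_{ell}$-factor of $\grt_1^{ell}=\grt_1\ltimes\mathfrak r_{ell}$, the starting point is the isomorphism $\mathfrak r_{ell}\cong H^0(\GC_{(1)})$ of \cite[Corollary~3]{Matteo}. The inclusion of Lie subalgebras $\GC_{(1)}\hookrightarrow\GCex_{(1)}$ sends each $u$ to a class in $H^0(\GCex_{(1)})$, and by Proposition~\ref{prop:long exact splits} (specialized to $g=1$, $m=1$ odd) there is an exact sequence
\[
0\to\K c\to H^0(\GC_{(1),1})\to H^0(\GCex_{(1)})\to \K c_1\oplus\K c_2\to 0.
\]
Crucially, the cokernel is the image of $\phi_{c_1},\phi_{c_2}\in\osp_1^{nil}$, which lie outside $\GC_{(1)}\subset\GCex_{(1)}$; combinatorially, elements of $\GC_{(1)}$ act on the empty graph of $\icgg(1)$ only by producing graphs with at least one internal vertex, which correspond under the quasi-isomorphism of Lemma~\ref{lem:icgwg} to elements in the Lie-bracket-generated part of $\hwg_1^{fr}$, never to the generators $c_1, c_2$. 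Hence each $u$ admits a lift $\tilde u\in H^0(\GC_{(1),1})$, unique modulo $\K c$, and Proposition~\ref{prop:gc to kv} produces a derivation $D_{\tilde u}\in\widehat\krv_{1,1}\subseteq\krv_{1,1}$.

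A change of lift by $\lambda c$ modifies $D_{\tilde u}$ by the inner derivation $\ad_{2\lambda\sum_{i,j} g_{ij}[c_i,c_j]}$, using the explicit representative of $c$ computed in the proof of Proposition~\ref{prop:long exact splits}. A direct divergence calculation shows $\div(\ad_x)=|f(x)|$ for $x=\sum_{i,j}g_{ij}[c_i,c_j]$ and some $f\in\Q[\![t]\!]$, so the ambiguity is precisely by a central inner derivation lying in $\krv_{1,1}\setminus\widehat\krv_{1,1}$ allowed by the defining divergence cocycle. Fixing a weight-preserving section of the short exact sequence (possible since $c$ has weight $2$) then yields a well-defined map $u\mapsto D_u$ into $\krv_{1,1}$. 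The $\grt_1$-factor is handled by the classical inclusion $\grt_1\cong H^0(\GC)\to\krv$ of \cite{severa}, compatible with the elliptic construction via the natural graphical map $\GC\to\GC_{(1),1}$.

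The main obstacle will be identifying $D_u$ with the Enriquez derivation $u\in\Der(\FreeLie(c_1,c_2))$ coming from the a priori definition of $\mathfrak r_{ell}$, and verifying the semi-direct bracket compatibility. Both $D_u$ and the Enriquez derivation are defined via the trivalent-tree projection of the representing graph followed by its interpretation as a Lie word; this is the common combinatorial recipe of Matteo's isomorphism $\mathfrak r_{ell}\cong H^0(\GC_{(1)})$ and of the proof of Proposition~\ref{prop:gc to kv}. Unwinding both sides on generators reduces the identification to \cite[Corollary~3]{Matteo}. The Lie bracket compatibility then follows by functoriality of each step, noting that the $\grt_1$-action on $\mathfrak r_{ell}$ is realized at the level of $H^0(\GC_{(1),1})$ as the adjoint action of the image of $\grt_1\to H^0(\GC_{(1),1})$.

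For the associator statement $(\Phi,U)\mapsto U$, once the Lie algebra map is in place the torsor structures on $\underline{Ell}_1(k)$ and $\mathrm{SolKV}_{1,1}$ reduce the problem to exhibiting a single elliptic associator whose $U$-component lies in $\mathrm{SolKV}_{1,1}$. I would verify this directly for the KZB associator, whose expansion manifestly encodes the defining bracket and divergence relations of $\mathrm{SolKV}_{1,1}$, and propagate the conclusion to the full torsor by the $\grt_1^{ell}$-equivariance of $(\Phi,U)\mapsto U$ that follows from the Lie algebra map just constructed.
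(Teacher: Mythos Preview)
Your approach is correct in outline but takes a detour around the paper's key simplification. In the paper's proof the map $(\phi,u)\mapsto u$ is the tautological second projection into $\Der(\wgeis)$, which is a Lie algebra map directly from Enriquez's definition of the bracket on $\grt_1^{ell}$ \cite[Section~4.6]{Enriquez}; the only content is then to check that the image lands in $\krv_{1,1}$. You instead reconstruct a derivation $D_u$ via the graph-complex route and are then forced to identify $D_u$ with the Enriquez derivation $u$, and to verify separately the semi-direct bracket compatibility between the $\grt_1$- and $\mathfrak r_{ell}$-pieces. This identification is correct (both are the trivalent-tree projection), but the whole step is unnecessary once one recognizes that the map is the obvious one.

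For the $\mathfrak r_{ell}$-part both arguments ultimately rest on Proposition~\ref{prop:gc to kv}. The paper packages the passage through the exact sequence of Proposition~\ref{prop:long exact splits} more directly as an isomorphism $\mathfrak r_{ell}\cong H^0(\GC_{(1),1})$ (using \cite[Corollary~109]{Matteo}, which gives $H^0(\GC_{(1)})$ as $\mathfrak r_{ell}$ modulo a central element, rather than the Corollary~3 cited in the introduction), whereas you lift from $H^0(\GC_{(1)})$ and handle the $\K c$-ambiguity by hand. Your observation that the cokernel $\K c_1\oplus\K c_2$ comes entirely from $\osp_1^{nil}$, hence does not obstruct lifting classes from $\GC_{(1)}$, is correct; in fact for $m=1$, $g=1$ one has $H^0(\GCex_{(1)})=H^0(\GC_{(1)})$ since $\osp_1^{nil}$ sits in degree~$-1$ and $d\phi_c=0$. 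For the $\grt_1$-part the paper cites \cite[Theorem~1.6]{AKKN} for the image of Enriquez's section; your route via $\GC\to\GC_{(1),1}$ and \cite{severa} is viable but needs the extra check that the resulting derivation agrees with the one from the Enriquez section, which again the paper's viewpoint avoids.

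For the torsor statement both proofs are the same: one element of $\underline{Ell}_1(k)$ maps into $\mathrm{SolKV}_{1,1}$ by \cite[Theorem~1.6]{AKKN}, and the Lie algebra map propagates this to the full torsor.
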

\begin{proof}
That the projection $(\phi, \alpha_{\pm}) \mapsto (\alpha_+, \alpha_-)$ is a map of Lie algebras $\grt^{ell}_1 \to \Der(\wgeis)$ follows directly from the definition of the Lie algebra structure on $\grt^{ell}_1$ in \cite[Section 4.6]{Enriquez}. Moreover, by \cite[Proposition 4.15]{Enriquez} the projection $\grt^{ell}_1 \to \grt_1$ of Lie algebras has a section and \cite[Theorem 1.6]{AKKN} shows that the image of that section lands in $\krv_{1,1}$. Thus it remains to show that the kernel $\mathfrak r_{ell}$ of $\grt^{ell}_1 \to \grt_1$ is mapped to $\krv_{1,1}$. By \cite[Corollary 109]{Matteo}, $H^0(\GC_{(1)})$ is identified with the quotient of $\mathfrak{r}_{ell}$ by a central element. Since in turn $H^0(\GC_{(1),1})$ and $H^0(\GC_{(1)})$ differ by the corresponding element by Proposition \ref{prop:long exact splits}, we deduce that $\mathfrak r_{ell}\cong H^0(\GC_{(1),1})$. Finally, Proposition \ref{prop:gc to kv} shows that $H^0(\GC_{(1),1})$ is indeed mapped to $\krv_{1,1}$.

The moreover part now follows since both sides are torsors over the respective exponentiated groups corresponding to the Lie algebras from the first part. Thus it suffices to show that at least one element of $\underline{Ell}_1(k)$ is mapped into $\mathrm{SolKV}_{1,1}$ which is one of the the main results of \cite[Theorem 1.6]{AKKN}.
\end{proof}

\section{Degree bounds and proof of Theorem \ref{thm:main cohom GC vanishing}}

\subsection{Degree bounds on the "ordinary" graph complex}
We shall make use of degree bounds on the hairy graph cohomology that are shown in \cite[Theorem 1.6]{CGP2}.
The hairy graph complexes are defined similarly to the graph complexes considered in this paper, but they are not the same.
More precisely, analogously to the definitions in section \ref{sec:GCs} above, we may consider a complex $\HGC_{0,2}^{[h]}$ of (isomorphism classes of) connected at least trivalent graphs with $h$ hairs labeled $1,\dots,h$ and no tadpoles at vertices. The cohomological degree of such a graph $\Gamma\in\HGC_{0,2}^{[h]}$ is 
\[
(2\#\text{vertices}) - (\# \text{edges}).  
\]
Here we count the hairs as edges, but without a vertex at the end of the hair. For example, the graph 
\[
\begin{tikzpicture} 
  \node[int] (v1) at (0:.5) {};
  \node[int] (v2) at (120:.5) {};
  \node[int] (v3) at (-120:.5) {};
  \node (w1) at (0:1.1) {$1$};
  \node (w2) at (120:1.1) {$2$};
  \node (w3) at (-120:1.1) {$3$};
\draw (v1) edge (v2) edge (v3) edge (w1)
(v2) edge (v3) edge (w2) 
(v3) edge (w3);
\end{tikzpicture}
\]
has three vertices, 6 edges and cohomological degree 0.
The differential $\delta=\delta_{split}$ is again given by splitting vertices.
The graph complex $\HGC_{0,2}^{[h]}$ appears at various places in the literature, in particular in the embedding calculus, and it also computes (up to degree shifts) the top weight cohomology of the moduli spaces of curves. The following degree bound is known.
\begin{prop}[Chan-Galatius-Payne, {\cite[Theorem 1.6]{CGP2}}]\label{prop:HGC bound}
The cohomology of $\HGC_{0,2}^{[h]}$ is concentrated in cohomological degrees $\geq -1$.
\end{prop}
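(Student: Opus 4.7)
The plan is to reduce the vanishing statement to the vanishing of rational cohomology of $\mathcal{M}_{g,h}$ in negative degrees via the tropical/moduli interpretation of Chan--Galatius--Payne.

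First, I would decompose $\HGC_{0,2}^{[h]} = \bigoplus_{g\geq 0} \HGC_{0,2}^{[h],g}$ according to the first Betti number $g$ of the underlying graph. Vertex splitting preserves $g$, so each summand is a subcomplex, and it suffices to bound each separately. Within each $\HGC_{0,2}^{[h],g}$ the cohomological degree $k = 2V - E - h = V + 1 - b_1 - h = V - g + 1 - h$ is a function of $V$ alone, which lets one track the degree shift cleanly.

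Second, I would identify $\HGC_{0,2}^{[h],g}$ (after an overall shift) with the reduced rational cellular cochain complex of the tropical moduli space $\Delta_{g,h}$ of stable tropical curves of genus $g$ with $h$ labelled marked points. Cells of $\Delta_{g,h}$ are in bijection with isomorphism classes of connected stable (i.e.\ at-least-trivalent) graphs of Betti number $g$ with $h$ labelled hairs, the dimension of a cell being the number of edges (including hairs, minus one for the link of the cone point), the coboundary is vertex splitting, and the orientation conventions match those built into $\HGC_{0,2}^{[h]}$ through the ordering of edges. This part is the main bookkeeping task: carefully matching the Koszul signs from the ordering of edges in $\HGC_{0,2}^{[h]}$ with the cellular orientations on $\Delta_{g,h}$, and getting the degree shift correct.

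Third, I would invoke Chan--Galatius--Payne's identification
\[
  \tilde H^{k-1}(\Delta_{g,h};\Q) \;\cong\; \mathrm{Gr}^W_{6g-6+2h}\, H^{k+6g-6+2h-1}(\mathcal{M}_{g,h};\Q),
\]
obtained via the Deligne weight filtration and the description of the top-weight piece as the reduced cohomology of the boundary complex of a smooth simple normal-crossings compactification, applied to the Deligne--Mumford compactification $\overline{\mathcal{M}}_{g,h}$. Because $\mathcal{M}_{g,h}$ is a smooth Deligne--Mumford stack of complex dimension $3g-3+h$, its rational cohomology vanishes outside the range $0 \leq * \leq 6g-6+2h$. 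Pulling this vanishing back through the identification in step two yields the bound $k \geq -1$ on the cohomological degree in $\HGC_{0,2}^{[h],g}$, uniformly in $g$.

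The hardest step is the middle one: setting up the dictionary between graph and cellular conventions with correct signs and degree shifts, and treating low-$g$ or low-$h$ edge cases (e.g.\ $g=0$ where $\Delta_{0,h}$ must be the link of the tropical moduli space of rational stable curves) so that the lower bound $k\geq -1$ comes out sharply rather than off by one. The bound is saturated by trees $\Gamma$ with $V$ vertices and $h=V+2$ hairs, which correspond to the top-dimensional cells of $\Delta_{0,h}$, consistent with the sharpness of the moduli-theoretic vanishing at $* = 0$.
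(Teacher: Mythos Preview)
Your overall route---decompose by loop order, identify with (a piece of) the cohomology of $\mathcal{M}_{g,h}$ via the Chan--Galatius--Payne/tropical picture---is exactly the paper's. The gap is in your third step: the trivial bound $0 \leq * \leq 6g-6+2h$ on $H^*(\mathcal{M}_{g,h};\Q)$ coming from the real dimension is far too weak to force the graph-complex degree to be $\geq -1$. Concretely, a graph of loop order $g$ with $h$ hairs and $e$ internal edges has $v = e-g+1$ vertices and graph-complex degree $d = 2v - (e+h) = e - 2g - h + 2$, and it contributes to $W_0 H^{e}_c(\mathcal{M}_{g,h};\Q)$. The dimension bound only gives $H^e_c = 0$ for $e < 0$, hence $d \geq 2 - 2g - h$, which says nothing for large $g,h$. (Your displayed isomorphism also has the index off: with the conventions just stated one lands in $\mathrm{Gr}^W_{6g-6+2h} H^{6g-6+2h-e}$ rather than $H^{k+6g-6+2h-1}$, and the bound $*\leq 6g-6+2h$ applied there points the wrong way in any case.)

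What the paper (recalling the argument of \cite{CGP2}) actually uses is Harer's computation of the virtual cohomological dimension, $\mathrm{vcd}(\mathcal{M}_{g,h}) = 4g - 4 + h + \delta_{0,g} - \delta_{0,h}$. This gives $H^j(\mathcal{M}_{g,h}) = 0$ for $j > \mathrm{vcd}$, hence by Poincar\'e duality $H^e_c(\mathcal{M}_{g,h}) = 0$ for $e < (6g-6+2h) - \mathrm{vcd} = 2g - 2 + h - \delta_{0,g} + \delta_{0,h}$. Plugging into $d = e - 2g - h + 2$ yields $d \geq -\delta_{0,g} + \delta_{0,h} \geq -1$, with the bound saturated only for $g=0$, i.e.\ trees. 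So the missing ingredient is precisely Harer's vcd, a non-trivial theorem that cannot be replaced by the dimension bound alone.
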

\begin{proof}
To account for different degree conventions, let us briefly recall the argument of \cite{CGP2}. 
There the authors show that the part of the graph cohomology with $k$ internal (i.e., non-hair) edges, $h$ hairs and of loop order $g$ computes the weight 0 part of the compactly supported cohomology $H^k_c(\MM_{g,h})$ of the moduli space of curves.
But $\MM_{g,h}$ has real dimension $6g-6+2h$, and by a computation of Harer it has vcd $4g-4 +h +\delta_{0,g}-\delta_{0,h}$.
Hence we need to have $k\geq 2g-2 +h-\delta_{0,g}+\delta_{0,h}$.
Converting back to the degree $d$ in our graph complex we find 
\[
d = k - 2g - h +2 \geq -\delta_{0,g}+\delta_{0,h} \geq -1.
\]
\end{proof}
In fact, one sees from the proof that the bound is saturated for tree graphs only. 

\begin{rem}
The statement of Proposition \ref{prop:HGC bound} can alternatively be deduced from the proof of \cite[Lemmas 1.2, 1.4]{WillDegree}. More precisely, in \cite{WillDegree} only the part of $\HGC_{0,2}^{[h]}$ that is symmetric or antisymmetric under the interchange of labels $1,\dots,h$ is considered. However, the argument in that paper does not use the symmetrization, and goes through for the non-symmetrized complex as well. Furthermore, in \cite{WillDegree} graphs are allowed to have tadpoles, but this does not alter the result, as can be seen by the arguments of section \ref{sec:part i proof} above.
\end{rem} 

\subsection{Lower bound and proof of part (i) of Theorem \ref{thm:main cohom GC vanishing}}

In view of part (i) of Theorem \ref{thm:GC comparison}, that we have already proven and the weight 1 computation in section \ref{sec:weight 1 comparison}, it suffices to show part (i) of Theorem \ref{thm:main cohom GC vanishing} for $\GC_{(g),1}$.

Filter $\GC_{(g),1}$ by loop order, as in section \ref{sec:part i proof} above. Considering the associated spectral sequence, it then suffices to show that $H(\GC_{(g),1},\delta_{split})$ is concentrated in the degree range stated in part (i) of Theorem \ref{thm:main cohom GC vanishing}.

Consider first the case $m=1$, and pick some basis $c_1,\dots,c_{2g}$ of $H_1(W_{g,1})$. Then $(\GC_{(g),1},\delta_{split})$ splits into a direct product of subcomplexes according to the number of decorations $c_1,\dots,c_{2g}$ in graphs.
Concretely, fix some multi-index $\underline n=(n_1,\dots,n_{2g})$ and denote the respective subcomplex by $\GC_{(g),1}^{\underline n}\subset \GC_{(g),1}$. (It is spanned by graphs with $n_1$ decorations $c_1$, $n_2$ decorations $c_2$ etc.)
This subcomplex is then identified with a summand of the complex $\HGC_{0,2}^{[|n|]}$, concretely 
\begin{equation}\label{equ:GC HGC}
  \GC_{(g),1}^{\underline n} \cong (\HGC_{0,2}^{[|\underline n|]})_{S_{n_1}\times \cdots \times S_{n_{2g}} },
\end{equation}
with the symmetric groups acting by permuting hair labels. Invoking Proposition \ref{prop:HGC bound} this then shows that $H(\GC_{(g),1}^{\underline n},\delta_{split})$ is concentrated in non-negative degrees.
Hence the same holds for $H(\GC_{(g),1})$, finishing the proof for the case $m=1$.
Note that the complexes $\gr^W\GC_{(g),1}$ are isomorphic for any odd $m$, up to overall degree shifts. Concretely, changing $m$ by $+2$ reduces the degree of a given graph by $2W$.
Hence the vanishing statement of \ref{thm:main cohom GC vanishing} (i) follows for odd $m$.

For even $m$ one has to mind that the decorations by $H_m(W_{g,1})$ are now even objects. However, the same proof as above goes through, except that one has to antisymmetrize, instead of symmetrize over hair labels in \eqref{equ:GC HGC}.

In fact, in either case one sees from the proof of Proposition \ref{prop:HGC bound} that the degree bound is saturated (only) by trees, and in fact only trivalent such trees. 

\subsection{Proof of part (ii) of Theorem \ref{thm:main cohom GC vanishing}}
By part (ii) of Theorem \ref{thm:GC comparison} we know that for $g\geq 2$ we have that $H(\GCex_{(g)})$ is a summand (as a graded vector space) of $H(\GC_{(g),1})$, in a way preserving the weight grading.
Hence part (ii) of Theorem \ref{thm:main cohom GC vanishing} follows immediately from part (i).

\subsection{Graph complex with two-colored edges -- recollection from \cite{BM}}\label{sec:CGamma}
Let $\Gamma$ be a graph with vertex set $\{1,\dots,N\}$ and $k$ (ordered) edges.
Consider the complex 
\[
C^{\otimes k}    
\]
with $C$ a two-dimensional acyclic complex,
\[
C = (\Q[1] \to \Q).
\]
we interpret the natural basis elements of $C^{\otimes k}$ as assignments of ``colors"\footnote{Or rather dash-patterns, for compatibility with grayscale printing.} to edges of $\Gamma$, say the $\Q[1]$ corresponding to a solid edge, and $\Q$ to a dashed edge.
Hence $C^{\otimes k}$ has a natural basis that can be identified with colorings of the edges of $\Gamma$ such as the following.
\[\begin{tikzpicture}
  \node[int,label=-90:{$\scriptstyle 1$}] (v1) at (0,0) {};
  \node[int,label=-90:{$\scriptstyle 2$}] (v2) at (1,0) {};
  \node[int,label=90:{$\scriptstyle 3$}] (v3) at (0,1) {};
  \node[int,label=90:{$\scriptstyle 4$}] (v4) at (1,1) {};
  \draw (v1) edge[dashed] (v2) edge (v3) edge (v4)
  (v2) edge[dashed] (v4)
  (v3) edge (v4);
  \end{tikzpicture}\]
  The differential acts by summing over edges, and replacing a solid edge by a dashed one.
It is clear that the complex $C^{\otimes k}$ is acyclic if $k\geq 0$, i.e., if $\Gamma$ has at least one edge.

Let $I_{disc}\subset C^{\otimes k}$ be the subcomplex spanned by all colored graphs for which the subgraph comprised of the vertices and the solid edges is not connected. We define 
\[
C_\Gamma = C^{\otimes k}/I
\] 
to be the quotient complex obtained by setting to zero all such solid-disconnected graphs.
We note that, obviously, $C_\Gamma$ is bounded and concentrated in cohomological degrees $-k,\dots, 1-N$.

\begin{lemma}[Lemma 8.8 of \cite{BM}]\label{lem:CGamma}
The cohomology of $C_\Gamma$ is concentrated in the top degree $1-N$.
\end{lemma}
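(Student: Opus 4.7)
The plan is induction on the number of edges $k$ of $\Gamma$, using a short exact sequence obtained by distinguishing a single edge. One may assume $\Gamma$ is connected, since otherwise no subset of solid edges spans all of $V(\Gamma)$ and $C_\Gamma = 0$, making the claim vacuous. The base case is $k = N-1$, where $\Gamma$ is a spanning tree: removing any edge of a tree disconnects it, so the only nonzero class in $C_\Gamma$ comes from the all-solid coloring, which sits in degree $-(N-1) = 1-N$.

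For the inductive step, fix an edge $e \in E(\Gamma)$. The $e$-dashed colorings form a subcomplex of $C_\Gamma$ (the differential can only turn solid edges dashed) canonically identified with $C_{\Gamma - e}$, and the quotient, the $e$-solid colorings, is identified with $C_{\Gamma/e}[1]$: writing the solid-edge set as $S = S' \sqcup \{e\}$, the subgraph $(V, S)$ is connected iff $(V(\Gamma/e), S')$ is connected in $\Gamma/e$, and the shift $[1]$ absorbs the contribution of the additional solid edge $e$ to the cohomological degree.

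The resulting short exact sequence
\[
0 \to C_{\Gamma - e} \to C_\Gamma \to C_{\Gamma/e}[1] \to 0
\]
yields a long exact sequence in cohomology. By the inductive hypothesis $H^\bullet(C_{\Gamma - e})$ is concentrated in degree $1 - N$ (the top degree for a graph on $N$ vertices), while $H^\bullet(C_{\Gamma/e}[1])$ is concentrated in degree $1 - (N-1) - 1 = 1-N$ (the top degree for a graph on $N-1$ vertices, shifted by one). Both flanking terms therefore vanish outside degree $1-N$, forcing $H^i(C_\Gamma) = 0$ for every $i \neq 1-N$.

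Two degenerate cases must be addressed. When $e$ is a loop, its color does not affect connectivity, so $C_\Gamma \cong C_{\Gamma - e} \otimes C$ is acyclic (since $C$ is), and the conclusion is trivial. When $e$ is a bridge of $\Gamma$ one has $C_{\Gamma - e} = 0$, and the short exact sequence degenerates to an isomorphism $C_\Gamma \cong C_{\Gamma/e}[1]$, to which induction applies directly. The main technical step is the clean identification of the sub- and quotient complexes with $C_{\Gamma-e}$ and $C_{\Gamma/e}[1]$ together with the bookkeeping of the degree shift; once that is in place, the result follows by a routine long-exact-sequence chase.
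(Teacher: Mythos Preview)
Your proof is correct and follows essentially the same deletion--contraction strategy as the paper: fix an edge $e$, identify the $e$-dashed piece with $C_{\Gamma\setminus e}$ and the $e$-solid piece with $C_{\Gamma/e}[1]$, and invoke the induction hypothesis. The only cosmetic differences are that you phrase the argument via the short exact sequence and its long exact sequence rather than a two-step spectral sequence, you induct on $k$ alone (which suffices, since both $\Gamma\setminus e$ and $\Gamma/e$ have $k-1$ edges), and you spell out the loop and bridge cases explicitly where the paper leaves them implicit.
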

The cohomology is hence given by linear combinations of solid trees, modulo the differential of solid one-loop graphs.

\begin{proof}
We proceed by induction on the number $N$ of vertices and the number $k$ of edges. The base cases (either $N=1$ or $k=0$) are trivial.
Suppose we know the statement for all graphs with $<N$ vertices or with $N$ vertices and $<k$ edges.

Pick some edge $(i,j)$. We may assume $i\neq j$ for clarity, for otherwise $C_\Gamma$ is obviously acyclic.
We have a splitting of graded vector spaces 
\[
  C_\Gamma = V_0 \oplus V_1,    
\]
where $V_0\subset C_\Gamma$ is spanned by graphs in which the edge $(i,j)$ is solid, and $V_1$ spanned by graphs where it is dashed.
Taking an appropriate spectral sequence the first page is
\[
H(V_0) \oplus H(V_1).
\]
But the complex $V_0$ is isomorphic to $C_{\Gamma/(i,j)}$, with $\Gamma/(i,j)$ the graph obtained by contracting the edge $(i,j)$.
Similarly $V_1$ is isomorphic to $C_{\Gamma\setminus(i,j)}$, with $\Gamma\setminus(i,j)$ obtained by removing edge $(i,j)$ from $\Gamma$. 
Invoking the induction hypothesis we are hence done.
\end{proof}

\subsection{Upper bound and proof of part (iii) of Theorem \ref{thm:main cohom GC vanishing}} \label{sec:main cohom GC vanishing proof 3}

We next want to show the vanishing statement Theorem \ref{thm:main cohom GC vanishing} (iii), thus finishing the proof of Theorem \ref{thm:main cohom GC vanishing}.
By using Theorem \ref{thm:GC comparison} it is sufficient to show the statement for $\GC_{(g),1}^\tp$, and for the other two complexes it then follows.
More precisely, By Theorem \ref{thm:GC comparison} (i) we know that $\gr^W\GC_{(g),1}^\tp=\gr^W\GC_{(g),1}$ for weights $W\geq 2$. But the weight $W=1$-part $\gr^W\GC_{(g),1}$ is concentrated in degree $1-m$.
Similarly, by Theorem \ref{thm:GC comparison} (ii) we have that $\gr^W H(\GCex_{(g)})$ is a summand of $\gr^W H(\GC_{(g),1})$ as long as $g\geq 2$, and by assumption we have $g\geq W+2\geq 3$.

Hence we need to show part (iii) of Theorem \ref{thm:main cohom GC vanishing} only for $\GC_{(g),1}^\tp$. In fact, we will equivalently show the dual vanishing statement for the graded dual complex $\G_{(g),1}^\tp$. Concretely we need to show that 
\[
\gr^W H^k(\G_{(g),1}^\tp)= H^k(\gr^W \G_{(g),1}^\tp) = 0  
\]
for $k<(m-1)W$ and $g\geq W+2$.
Let us filter $\gr^W\G_{(g),1}^\tp$ by the total number of vertices by setting $\mF^p \gr^W\G_{(g),1}^\tp$ to be the subspace spanned by graphs with $\leq p$ vertices. 
By finite dimensionality of the complex the associated spectral sequence converges to the cohomology.
The differential on the associated graded is just the part $d_{cut}$ of the full differential. Hence it suffices to show that 
\begin{equation}\label{equ:upper bound tbs}
  H^k(\gr^W \G_{(g),1}^\tp, d_{cut}) = 0\quad\quad \text{for $k< (m-1)W$ and $g\geq W+2$}.
\end{equation}

To go further, we would like to apply Lemma \ref{lem:complex inv}.
To this end we check:
\begin{lemma}\label{lem:upper bound D}
  Let $mD$ be the total degree of decorations appearing in a graph $\Gamma$ in $\gr^W \G_{(g),1}$, $\gr^W \G_{(g),1}^\tp$ or $\gr^W \G_{(g)}$. Then 
\[
  D \leq  W +2.
\]
The bound is saturated for tree graphs.
\end{lemma}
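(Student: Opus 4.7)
The plan is essentially a one-line consequence of the definition of weight, once we recall that the complexes $\G_{(g),1}$, $\G_{(g),1}^\tp$ and $\G_{(g)}$ are spanned by \emph{connected} graphs. Concretely, for any such graph $\Gamma$ with $v$ vertices, $e$ edges and total decoration degree $mD$, the weight was defined to be
\[
W = 2(e-v) + D,
\]
so that we may solve for $D$ as
\[
D = W + 2(v-e).
\]
Since $\Gamma$ is connected, its first Betti number $b_1(\Gamma) = e - v + 1$ is nonnegative, i.e. $e \geq v - 1$, with equality if and only if $\Gamma$ is a tree. Substituting yields
\[
D = W + 2(v-e) \leq W + 2,
\]
with equality precisely on trees.

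I would also briefly remark that this bound is consistent with the trivalence condition imposed in the definitions of the graph complexes: for a tree with $v$ vertices one has $e = v-1$, hence the sum of vertex valences equals $2(v-1) + D$, and the trivalence condition $\geq 3v$ forces $D \geq v+2$, i.e. $v \leq W$ on trees saturating the bound. This is only a sanity check and is not needed for the lemma itself.

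I do not expect any serious obstacle here: once the weight formula is written down, the inequality is purely combinatorial and follows from connectivity. The only care needed is to note that the argument applies uniformly to all three variants $\G_{(g),1}$, $\G_{(g),1}^\tp$ and $\G_{(g)}$, since in each case the underlying graphs are connected and the weight is defined by the same formula; tadpoles (in the $\tp$-version) and $\omega$-decorations (in $\G_{(g)}$) do not affect the counts $e$, $v$, $D$ entering the bound.
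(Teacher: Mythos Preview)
Your proof is correct and is essentially identical to the paper's own argument: both write $D = W - 2(e-v)$ and use that connectedness gives $e-v \geq -1$ with equality exactly for trees. Your additional sanity check about the trivalence condition is not needed but does no harm.
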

\begin{proof}
For a graph with $e$ edges, $v$ vertices and decorations of degree $mD$ the weight is by definition $W=2(e-v)+D$, and hence $D=W-2(e-v)$.
Since for a connected graph $e-v\geq -1$, with equality precisely for tree graphs, the result follows.
\end{proof}

We hence find that the $\OSp_g$-representation $\gr^W\G_{(g),1}^\tp$ is of order $\leq W+2$, in the notation of section \ref{sec:rep theory}, as is clear from the definition, see in particular \eqref{equ:fGdef}.

But by Lemma \ref{lem:complex inv} it then suffices to check that 
\[
  H^k((\gr^W \G_{(g),1}^\tp \otimes V_g^{\otimes M})^{\OSp_g}, d_{cut}) = 0
\]
for all $M=0,1,\dots, W+2$ and $k<(m-1)W$.
Suppose next that $g\geq W+2$.
Then we may apply classical invariant theory in the form of Theorem \ref{thm:inv theory}.
This then identifies  
\begin{equation}\label{equ:gr fc inv}
 ( (\gr^W \G_{(g),1}^\tp \otimes V_g^{\otimes M})^{\OSp_g}, d_{cut})
\end{equation}
with a complex of graphs with no decorations, but two types of edges, say solid and dashed, and $M$ external dashed legs. Here is an example graph for $M=3$ and $W=6$:
\begin{equation}\label{equ:dashed graph}
  \begin{tikzpicture}
    \node[int] (v1) at (0,0) {};
    \node[int] (v2) at (1,0) {};
    \node[int] (v3) at (0,1) {};
    \node[int] (v4) at (1,1) {};
    \node (w1) at (-.7,-.7) {$\scriptstyle 1$};
    \node (w2) at (0,-.7) {$\scriptstyle 2$};
    \node (w3) at (.7,-.7) {$\scriptstyle 3$};
    \draw (v1) edge (v2) edge (v3) edge[dashed] (v4) edge[dashed] (w1)
    (w2) edge[dashed, bend left] (w3)
    (v3) edge (v4) edge (v2)
    (v3) edge[dashed] (v4)
    (v4) edge[dashed] (v2);
  \end{tikzpicture}
\end{equation}
Each dashed edge represents a copy of the diagonal as in Theorem \ref{thm:inv theory}.
The differential $d_{cut}$ acts by replacing a solid edge by a dashed edge, as long as this leaves the solid subgraph connected.
\begin{align}\label{equ:dcut dashed}
  d_{cut}
  \begin{tikzpicture}
    \node[int] (v) at (0,0) {};
    \node[int] (w) at (1,0) {};
    \draw (v) edge (w) 
          (v) edge +(-.5,-.5) edge +(-.5,.5) edge +(-.5,0)
          (w) edge +(.5,-.5) edge +(.5,.5) edge +(.5,0);
  \end{tikzpicture}
  &= 
  \begin{tikzpicture}
    \node[int] (v) at (0,0) {};
    \node[int] (w) at (1,0) {};
    \draw (v) edge[dashed] (w) 
          (v) edge +(-.5,-.5) edge +(-.5,.5) edge +(-.5,0)
          (w) edge +(.5,-.5) edge +(.5,.5) edge +(.5,0);
  \end{tikzpicture}
\end{align}

Note that the differential $d_{cut}$ does not alter the \emph{core} graph, by which we mean the graph obtained by forgetting the color (solid or dashed) of edges. Hence $( (\gr^W \G_{(g),1}^\tp \otimes V_g^{\otimes M})^{\OSp_g}, d_{cut})$ splits as a direct sum of subcomplexes according to core graphs $\Gamma$, each of which has the form 
\begin{equation}\label{equ:fc CGamma}
  (C_{\Gamma'} \otimes K(\Gamma))_{\Aut(\Gamma)},
\end{equation}
with $\Gamma'$ obtained from $\Gamma$ by removing the legs, $C_{\Gamma'}$ the two-colored (solid-dashed) complex from the previous subsection, $\Aut(\Gamma)$ the (finite) automorphism group of the core graph $\Gamma$ and $K(\Gamma)$ some one-dimensional representation of $\Aut(\Gamma)$ taking care of signs and degrees.
But by Lemma \ref{lem:CGamma} we know that $H(C_{\Gamma'})$ is represented by two-colored graphs whose solid subgraph is a spanning tree.
The same is hence true for any direct summand of $C_{\Gamma'}$, and in particular for \eqref{equ:fc CGamma}.
But a graph in $\G_{(g),1}^\tp$ of weight $W$ with $e$ edges has cohomological degree 
\[
k=mW-e-1.
\]
For tree graphs one has furthermore that $D=W+2$, and also $e\leq D-3=W-1$ from the condition that all vertices need to be at least trivalent. It follows that 
\[k\geq (m-1)W,\]
thus finishing the proof of part (iii) of Theorem \ref{thm:main cohom GC vanishing}.
\hfill \qed

\section{The Chevalley Eilenberg complex, and proof of Theorem \ref{thm:main CE all}}
\label{sec:CE}

\subsection{Definition, and graphical interpretation}
\label{sec:CE interpretation}
We define the Chevalley-Eilenberg complexes of the three dg Lie algebras considered in this paper as the cobar constructions of the respective graded dual Lie coalgebras
\begin{align*}
C_{CE}(\GC_{(g),1}) &= \Bar^c \G_{(g),1}
&
C_{CE}(\GC_{(g),1}^\tp) &= \Bar^c \G_{(g),1}^\tp
&
C_{CE}(\GCex_{(g)}) &= \Bar^c (\Gex_{(g)}).
\end{align*}
In particular, the Chevalley-Eilenberg complexes thus defined are dg commutative algebras, and graphically the commutative product is given by the disjoint union of graphs. 
They furthermore inherit the weight grading from the Lie algebras. 
The summands $\gr^W C_{CE}(\cdots)$ of fixed weight are furthermore finite dimensional dg vector spaces since the weight on the Lie algebras is positive.

In fact, by the very definition of the Lie bracket of our graphical dg Lie algebras, we can identify the Chevalley-Eilenberg complexes with the respective complexes of (possibly) non-connected graphs from section \ref{sec:GCs},
\begin{align*}
  C_{CE}(\GC_{(g),1}) &= \fG_{(g),1}
  &
  C_{CE}(\GC_{(g),1}^\tp) &= \fG_{(g),1}^\tp
  &
  C_{CE}(\GCex_{(g)}) &= \fGex_{(g)}.
\end{align*}
Here we should remark that the interpretation of $\fGex_{(g)}$ as a complex of graphs requires us to also provide a suitable graphical interpretation of the factors $\osp^{nil,c}_g\cong \Q^{2g}[-m]$.
In $\fGex_{(g)}$ we shall represent such a factor by a special (crossed) univalent vertex with a decoration in $H^m(W_g)$:
\[
\begin{tikzpicture}
\node[cross, label={$\alpha$}] (v) {};  
\end{tikzpicture}
\quad\quad \text{with $\alpha \in H^m(W_g)$}.
\] 
Here the special vertex formally carries degree $+1$ in the Chevalley-Eilenberg complex, so that the cohomological degree of a graph $\Gamma$ with $v$ (``normal") vertices, $c$ crossed vertices, $e$ edges and total decoration degree $mD$ is 
\[
  (2m-1) e - 2mv + mD +c.
\]
This formula applies to all three Chevalley-Eilenberg complexes, with $c=0$ in the cases $\fG_{(g),1}$ and $\fG_{(g),1}^\tp$.




We shall also give the combinatorial interpretation of the differential. The differential on the Chevalley-Eilenberg complex consists of two pieces, induced from the internal differential of the Lie coalgebra, and the Lie cobracket. These terms neatly unite in the Chevalley complex to give a total differential of the form $d=d_{c} + d_{cut} + d_{mul}''+ d_{\times}$, with the three pieces as follows.
The piece $d_c$ sums over all edges, contracting the edge. The piece $d_{cut}$ sums over all edges, replacing the edge by a pair of decorations. The piece $d_{mul}''$ is only present in the third of our three Chevalley-Eilenberg complexes (i.e., $C_{CE}(\GCex_{(g)})$) and acts by multiplying two decorations and contracting an adjacent edge as in section \ref{sec:GCs}.
The piece $d_{\times}$ is also only present in $C_{CE}(\GCex_{(g)})$, and acts by creating a crossed vertex.
There are two kinds of terms, the first coming from the Lie cobracket:
\begin{align*}
d_\times \, 
\begin{tikzpicture}
  \node[int,label={$\alpha$}] (v) at (0,0) {};
  \draw (v) edge +(.5,.5) edge +(.5,-.5) edge +(.5,0);
\end{tikzpicture}
\,
&=
\,
\begin{tikzpicture}
  \node[cross,label={$\alpha$}] (w) at (-0.7,0) {};
  \node[int] (v) at (0,0) {};
  \draw (v) edge +(.5,.5) edge +(.5,-.5) edge +(.5,0);
\end{tikzpicture}
&
d_\times \, 
\begin{tikzpicture}
  \node[int,label={$\omega$}] (v) at (0,0) {};
  \draw (v) edge +(.5,.5) edge +(.5,-.5) edge +(.5,0);
\end{tikzpicture}
\,
&=
\,
\sum_{i,j=1}^{2g}
g_{ij}
\begin{tikzpicture}
  \node[cross,label={$c_i$}] (w) at (-.7,0) {};
  \node[int,label={$c_j$}] (v) at (0,0) {};
  \draw (v) edge +(.5,.5) edge +(.5,-.5) edge +(.5,0);
\end{tikzpicture}
\end{align*} 
The second type of term comes from the piece of the differential $\G_{(g)}\to \osp_{g}^{nil, c}$, and this only acts on isolated vertices with exactly three decorations in $H^m(W_g)$,
\begin{align*}
  d_\times \, 
  \begin{tikzpicture}
    \node[int,label={$\alpha\beta \gamma$}] (v) at (0,0) {};
  \end{tikzpicture}
  \,
  &=
  \,
  \langle \alpha,\beta\rangle
  \begin{tikzpicture}
    \node[cross,label={$\gamma$}] (w) at (-0.7,0) {};
  \end{tikzpicture}
  +
  \langle \beta,\gamma\rangle
  \begin{tikzpicture}
    \node[cross,label={$\alpha$}] (w) at (-0.7,0) {};
  \end{tikzpicture}
  +
  \langle \gamma,\alpha\rangle
  \begin{tikzpicture}
    \node[cross,label={$\beta$}] (w) at (-0.7,0) {};
  \end{tikzpicture} 
  \quad \quad \quad \text{for }\alpha,\beta,\gamma\in H^m(W_g)\ .
  \end{align*}

Let us also introduce a second grading, the grading by $E$-number, on our complexes. On $C_{CE}(\GC_{(g),1})$ and $C_{CE}(\GC_{(g),1}^\tp)$ the $E$-number of some graph (in the graphical interpretation of the Chevalley-Eilenberg complex) is simply the number of edges.
For $C_{CE}(\GCex_{(g)})$ we define the $E$-number as the number of edges minus the number of special vertices "$\times$".
In any case, note that the cohomological degree is then expressed through the weight $W$ and $E$-number $E$ as 
\[
  mW -E.
\]
In particular note that the differential is homogeneous of degree $-1$ with respect to the $E$-number, and hence the cohomology inherits a grading by $E$-number.
In fact, one can think of the $E$ number as an $m$-independent shifted version of the homological degree, that will be convenient to use below.
Note that Theorem \ref{thm:main CE all} states that for $g$ large enough the cohomology of our Chevalley-Eilenberg complexes becomes concentrated in $E$-number 0.

\subsection{Proof of Theorem \ref{thm:main CE all} in the first two cases}\label{sec:main CE all proof 1}
Let us turn to the proof Theorem \ref{thm:main CE all} for the cases $C_{CE}(\GC_{(g),1})$ and $C_{CE}(\GC_{(g),1}^\tp)$. 
We will show the following refined version.
\begin{prop}\label{prop:main CE refined 1}
  We have that 
  \[
  \gr^W H^k_{CE}(\GC_{(g),1}) = \gr^W H^k_{CE}(\GC_{(g),1}^\tp) = 0
  \]
  if either of the following conditions is satisfied:
  \begin{enumerate}[(i)]
  \item $k>mW$.
  \item $k<mW$ and $g\geq 2k - (2m-3)W+1$.
  \end{enumerate}
  In particular if $g\geq 3W$ and $k\neq mW$, then one of the two conditions above is always satisfied.
\end{prop}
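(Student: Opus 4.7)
The plan is to mimic the proof of Theorem \ref{thm:main cohom GC vanishing}(iii) from section \ref{sec:main cohom GC vanishing proof 3}, adapting it from the connected graph complex $\G_{(g),1}^\tp$ to the Chevalley--Eilenberg complex $\fG_{(g),1}^\tp$ (the argument for $\fG_{(g),1}$ is identical). Part (i) is immediate: any graph has $e\geq 0$ edges and hence cohomological degree $mW-e\leq mW$, so the degree $k>mW$ part of the complex is empty.

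For part (ii), I would first filter $\gr^W\fG_{(g),1}^\tp$ by an increasing filtration on the number of vertices $v$. Since $d_c$ strictly decreases $v$ while $d_{cut}$ preserves it, the associated graded retains only $d_{cut}$, and the filtration is finite (in weight $W$ one has $v\leq W$ since all vertices are at least trivalent), so by convergence it suffices to prove $H^k(\gr^W\fG_{(g),1}^\tp, d_{cut})=0$ in the stated range. A computation analogous to Lemma \ref{lem:upper bound D} shows that the decoration degree of a graph at cohomological degree $k$ is bounded by $D_{\max}(W,k):=(3-2m)W+2k$, which bounds the order of the $\OSp_g$-representation carried by the degree-$k$ piece. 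By Lemma \ref{lem:complex inv} it then suffices to show
\[
  H^k\bigl((\gr^W\fG_{(g),1}^\tp \otimes V_g^{\otimes M})^{\OSp_g}, d_{cut}\bigr) = 0 \quad \text{for all $0\leq M\leq D_{\max}(W,k)$.}
\]
The lower bound $g\geq 2k-(2m-3)W+1=D_{\max}(W,k)+1$ on the genus is precisely what is required to apply Theorem \ref{thm:inv theory} at all three degrees $k-1, k, k+1$ relevant for computing $H^k$: the tightest case is at degree $k+1$, where the maximum of $(D+M)/2$ is $\tfrac12(D_{\max}(W,k+1)+D_{\max}(W,k))=D_{\max}(W,k)+1$. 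With this in force, the invariants at the relevant degrees are identified, exactly as in section \ref{sec:main cohom GC vanishing proof 3}, with a complex of bi-coloured graphs carrying $M$ external dashed legs, splitting as a direct sum indexed by ``core graphs''.

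The one point where the CE argument actually simplifies compared to that of section \ref{sec:main cohom GC vanishing proof 3} is in the treatment of connectivity. Since $\fG_{(g),1}^\tp = S^+(\G_{(g),1}^\tp[-1])$ allows arbitrary disjoint unions, the differential $d_{cut}$ is genuinely defined on every edge, bridges included -- cutting a bridge simply produces the associated disjoint union, which is a valid element of the symmetric product. Consequently the ``solid subgraph is connected'' constraint of Lemma \ref{lem:CGamma} is \emph{not} imposed here, and for each core graph $\Gamma$ the bi-coloured subcomplex is (up to a one-dimensional sign/shift factor) the unconstrained tensor power $C^{\otimes e_{col}(\Gamma)}$ of the two-term acyclic complex $C=(\Q[1]\to\Q)$, where $e_{col}(\Gamma)$ is the number of original edges of $\Gamma$. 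This tensor power is itself acyclic whenever $e_{col}(\Gamma)\geq 1$. At cohomological degree $k<mW$ every contributing graph has $e=mW-k>0$ original edges, so $e_{col}(\Gamma)\geq 1$ for every contributing core, and the desired vanishing follows. The concluding ``in particular'' statement is obtained by checking that the worst case $k=mW-1$ of (ii) requires only $g\geq 3W-1$. The step I expect to require the most care when writing this out in full is the explicit verification that the invariant-theoretic pairings of the original decorations impose no hidden connectivity restriction on the colourable edges; once this is settled, the rest is a bookkeeping exercise using the tools of sections \ref{sec:rep theory} and \ref{sec:main cohom GC vanishing proof 3}.
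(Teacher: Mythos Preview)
Your proposal is correct and follows essentially the same strategy as the paper. The only cosmetic differences are (a) you filter by vertex number before passing to invariants, whereas the paper takes invariants first and filters the stable complex $J_{\infty,M,W}$ afterwards, and (b) you phrase the acyclicity via the core-graph decomposition and the unconstrained tensor power $C^{\otimes e}$, while the paper writes down the explicit ``undashing'' homotopy $h$ with $hd_{cut}+d_{cut}h=N(\Gamma)\cdot\Gamma$; these are two descriptions of the same mechanism. One small sharpening: you do not actually need the invariant-theory isomorphism at degree $k-1$, only at degrees $k$ and $k+1$ (in fact surjectivity plus isomorphism at $k+1$ suffices, via the long exact sequence of the kernel), but your stated genus bound $g\geq D_{\max}(W,k)+1$ is already the correct one and matches the paper's.
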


We first note that the $E$-number in $C_{CE}(\GC_{(g),1})$ and $C_{CE}(\GC_{(g),1}^\tp)$ is just the number $e$ of edges in the graph, and hence non-negative.
It follows that, trivially,
\[
  \gr^WH^k_{CE}(\GC_{(g),1}) = \gr^WH^k_{CE}(\GC_{(g),1}^\tp) = 0
\]
if $k>mW$, and hence the first statement of Proposition \ref{prop:main CE refined 1} follows as well.

For the second statement we will need invariant theory and the following Lemma.

\begin{lemma}\label{lem:max deco}
  The maximal number of decorations of a non-trivial graph $\Gamma \in C_{CE}(\GC_{(g),1}^\tp)$ (resp. $\Gamma \in C_{CE}(\GC_{(g),1})$) of weight $W$ and $E$-number (i.e., number of edges) $e$ is
  \[
    3W -2e.
  \]
  In particular the maximum number of decorations in any graph of weight $W$ is $3W$.
  The bound is saturated by a graph with no edges, and all vertices carrying three decorations.
\end{lemma}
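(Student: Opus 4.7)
The plan is to combine the weight formula with the at-least-trivalent condition on vertices; no more sophisticated input is needed. Let $\Gamma$ be a non-trivial graph in $C_{CE}(\GC_{(g),1}^\tp)$ (the argument for $C_{CE}(\GC_{(g),1})\subset C_{CE}(\GC_{(g),1}^\tp)$ is identical) with $v$ vertices, $e$ edges and $D$ decorations. Since $\bar H_\bullet(W_{g,1})$ is concentrated in degree $m$, every decoration carries degree $m$, and by definition the weight equals $W = 2(e-v) + D$.

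First I would invoke the trivalence condition: at each vertex the number of incident half-edges plus the number of decorations is at least $3$. Summing over all vertices, and using that every edge contributes exactly two half-edges (a tadpole contributing two to the same vertex), gives the inequality $2e + D \geq 3v$. Substituting $D = W - 2e + 2v$ from the weight formula yields $v \leq W$, and then plugging this back in gives
\[
D \;=\; W - 2e + 2v \;\leq\; 3W - 2e,
\]
which is the claimed bound. Equality holds exactly when every vertex has valence $3$.

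For saturation at $e = 0$, I would exhibit the disjoint union of $W$ copies of the single-vertex graph carrying three decorations in $\bar H_m(W_{g,1})$ (for $g\geq 1$ such decorations exist). This graph has $v = W$, $e = 0$, $D = 3W$, weight $2(0-W) + 3W = W$, and $E$-number $0$, so it achieves $D = 3W - 2e$ with $e=0$. I do not expect any genuine obstacle, as the whole argument reduces to an elementary valence count; the only thing to note is that neither forbidding tadpoles (passing to $\GC_{(g),1}$) nor allowing them (in $\GC_{(g),1}^\tp$) affects either the weight identity or the half-edge count, so a single argument handles both complexes.
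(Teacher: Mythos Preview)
Your proof is correct and follows essentially the same approach as the paper: combine the weight identity $W=2(e-v)+D$ with the trivalence inequality $3v\leq 2e+D$ and eliminate $v$ to obtain $D\leq 3W-2e$. Your treatment of the saturation case is slightly more explicit than the paper's, but otherwise the arguments coincide.
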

\begin{proof}
Let $v$ be the number of vertices in a graph $\Gamma$ and $D$ the number of decorations.
We have that $W=2(e-v)+D$ and, by the trivalence condition, $3v\leq 2v+D$. Eliminating $v$ from this system of inequalities yields $D\leq 3W-2e$.
\end{proof}

We return to the proof of statement (ii) of Proposition \ref{prop:main CE refined 1}. In fact, we shall focus on the case of $\GC_{(g),1}^\tp$, with the statements for $\GC_{(g),1}$ being shown analogously. 

From Lemma \ref{lem:max deco} we see in particular that the representation $\gr^W C_{CE}(\GC_{(g),1}^\tp)$ of $\OSp_g$ is of order $\leq 3W$, and specifically the part of $E$-number $e$ is a representation of order $\leq 3W-2e$ in the sense of section \ref{sec:rep theory}.
Equivalently, using that the cohomological degree is $k=mW-e$, the degree $k$-part $\gr^W C_{CE}^k(\GC_{(g),1}^\tp)$ is of order $\leq 3W-2(mW-k)=2k-(2m-3)W$.

Let us define the complex of invariants 
\[
   J_{g,M,W} := \left(\gr^W C_{CE}(\GC_{(g),1}^\tp) \otimes V_g^{\otimes M} \right)^{\OSp_g}.
\]
To show the part (ii) of Proposition \ref{prop:main CE refined 1}, it suffices by Lemma \ref{lem:complex inv} to check that 
\[
H^k(J_{g,M,W}) =0
\]
for each $k<mW$ and $M\leq 3W-2e=2k-(2m-3)W$, provided $g\geq 3W-2e+1$.
Also note that one has natural maps of complexes
\[
J_{g+1,M,W} \to J_{g,M,W}.
\]
Furthermore, it follows from Theorem \ref{thm:inv theory} and Lemma \ref{lem:max deco} that these maps are isomorphisms on the degree $k$ part 
\[
J_{g+1,M,W}^k \to J_{g,M,W}^k
\]
if $2g\geq M+3W-2e=M+2k-(2m-3)W$.
We shall denote the stable limit by
\[
  J_{\infty,M,W} := J_{g,M,W}\quad \text{for any $g$ such that $2g\geq M+3W$}.
\]
Similarly to the proof in section \ref{sec:main cohom GC vanishing proof 3} the complex $J_{\infty,M,W}$ is identified with a complex of graphs with two kinds of edges, say "solid" and "dashed", but without decorations, such that each edge represents a copy of the diagonal element $\Delta_1$ to be inserted at its endpoints. There are precisely $M$ external dashed legs.
See \eqref{equ:dashed graph} for a picture. Note however that in contrast to section \ref{sec:main cohom GC vanishing proof 3} above the solid subgraph does now not need to be connected.

The differential is $d=d_c + d_{cut}$ with $d_c$ contracting an edge and $d_{cut}$ replacing it by a dashed one as in \eqref{equ:dcut dashed}.

\begin{lemma}
  The cohomology of $J_{\infty,M,W}$ is concentrated in degree $k=mW$. 
\end{lemma}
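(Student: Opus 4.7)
The key observation is that $d_c$ contracts a solid edge and strictly decreases the total number of edges $T$ (solid plus dashed, not counting external legs), while $d_{cut}$ replaces a solid edge by a dashed one and so preserves $T$. I will therefore filter $J_{\infty,M,W}$ by $T$ using the increasing filtration $F_p=\{T\leq p\}$. This filtration is bounded since $\gr^W C_{CE}(\GC_{(g),1}^\tp)$ is finite-dimensional for fixed $W$ (cf.\ Lemma~\ref{lem:max deco} and the trivalence bound), so the resulting spectral sequence converges.

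On the associated graded only $d_{cut}$ survives, and the complex splits as a direct sum over core graphs $\Gamma$ (obtained by forgetting edge colors). For a fixed core graph with $T$ edges the corresponding summand takes the form $(C^{\otimes T}\otimes K(\Gamma))_{\Aut(\Gamma)}$, with $C=(\Q[1]\to\Q)$ the two-dimensional acyclic complex of Section~\ref{sec:CGamma} and $K(\Gamma)$ some one-dimensional sign/shift representation, precisely as in formula~\eqref{equ:fc CGamma} from the proof of part~(iii) of Theorem~\ref{thm:main cohom GC vanishing}. The only change relative to that earlier discussion is that here we do \emph{not} impose the solid-connectivity condition $I_{disc}$, because the Chevalley--Eilenberg complex allows disconnected graphs, so the relevant complex is the full $C^{\otimes T}$ rather than the quotient $C_{\Gamma'}$.

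For $T\geq 1$ the tensor product $C^{\otimes T}$ is acyclic, and since we work in characteristic zero its coinvariants under the finite group $\Aut(\Gamma)$ (twisted by $K(\Gamma)$) are also acyclic. Hence the only $E_1$-contribution comes from core graphs with $T=0$, that is, graphs whose internal vertices carry only external dashed legs. Such graphs have no solid edges, i.e.\ $E$-number $0$, so they sit in cohomological degree $mW-0=mW$. Since $E_1$ is concentrated in the single cohomological degree $mW$ all higher differentials vanish for degree reasons, $E_\infty=E_1$, and $H^\bullet(J_{\infty,M,W})$ is concentrated in degree $mW$, as claimed. The only point requiring genuine bookkeeping is the identification of the associated graded with the claimed direct sum of $(C^{\otimes T})_{\Aut(\Gamma)}$-summands, but this is a routine adaptation of the argument in Section~\ref{sec:main cohom GC vanishing proof 3} and should not present any real obstacle.
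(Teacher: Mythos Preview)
Your argument is correct and follows essentially the same route as the paper's proof. Both reduce to the complex $(J_{\infty,M,W},d_{cut})$ via a filtration (the paper filters by the number of vertices, you by the total number of internal edges~$T$; either kills $d_c$ on the associated graded), and then show its cohomology is supported on graphs with no internal edges: the paper does this with the explicit homotopy $h=\sum_e\mathrm{undash}_e$ giving $hd_{cut}+d_{cut}h=N(\Gamma)\,\Gamma$, while you phrase the same computation as the acyclicity of $C^{\otimes T}$ for $T\geq 1$, neatly reusing the setup of Section~\ref{sec:CGamma} without the solid-connectivity quotient.
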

\begin{proof}
Using again a spectral sequence on the filtration by the number of vertices, it suffices to show  that $(J_{\infty,M,W}, d_{cut})$ is acyclic. 
However, note that in contrast to the proof in section \ref{sec:main cohom GC vanishing proof 3}
there is no connectivity requirement on the resulting graphs in the present case. 
We may hence introduce a homotopy 
\begin{gather*}
h : J_{g,M,W} \to J_{g,M,W} \\
\Gamma \mapsto h\Gamma = \sum_{e} \text{undash}_e(\Gamma),
\end{gather*}
where the sum is over all dashed edges $e$ \emph{between two vertices} (i.e., not external legs) and $\text{undash}_e(\Gamma)$ is the graph obtained by replacing the dashed edge $e$ by a normal one, which becomes the first in the ordering of edges to fix the sign.
One then verifies that 
\[
hd_{cut}\Gamma + d_{cut} h \Gamma= N(\Gamma)  \Gamma
\]
with $N(\Gamma)$ the number of edges (dashed or normal alike) between two vertices of $\Gamma$.
This means in particular that the cohomology is concentrated in $E$-number zero, or equivalently in cohomological degree $k=mW$.
\end{proof}

We return to the proof of Proposition \ref{prop:main CE refined 1}.
We know (from Theorem \ref{thm:inv theory}) that the map 
\[
  J_{\infty,M,W}^k \to J_{g,M,W}^k
\]
is an isomorphism for $2g\geq M + 2k - (2m-3)W$.
It follows that the induced map
\[
  H^k(J_{\infty,M,W}) \to H^k(J_{g,M,W})
\]
is an isomorphism for $2g\geq M + 2k+2 - (2m-3)W$.
In particular, if $g\geq 2k - (2m-3)W+1$ then
\[ 
H^k(J_{g,M,W}) = H^k(J_{\infty,M,W}) =0
\]
for each $k<mW$ and $M\leq 2k-(2m-3)W$.
This shows Proposition \ref{prop:main CE refined 1} for the case 
$C_{CE}(\GC_{(g),1}^\tp)$.

Note that the argument is agnostic of whether or not the graphs have tadpoles, and so the same proof goes through for the case $C_{CE}(\GC_{(g),1})$. \hfill \qed





\subsection{Proof of Theorem \ref{thm:main CE all} for $\GCex_{(g)}$}\label{sec:main CE all proof 2}
We shall proceed analogously to the previous subsection, with some additional complications. For simplicity, we shall also restrict to showing Theorem \ref{thm:main CE all} and not the refined genus bounds as in Proposition \ref{prop:main CE refined 1} in this case.

\begin{lemma}\label{lem:max deco 2}
  Let $\Gamma \in \gr^W C_{CE}(\GCex_{(g)})$ be a graph of weight $W$ with total degree of decorations $mD$. Then $D\leq 3W$. If $\Gamma$ has $E$-number $E$ then $D\leq 3W-2E$.
\end{lemma}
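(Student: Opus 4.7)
The plan is to mimic the counting argument used in the proof of Lemma \ref{lem:max deco}, carefully tracking how crossed vertices contribute to each of the relevant combinatorial quantities. Let me denote by $v$ the number of ordinary (at least trivalent) vertices of $\Gamma$, by $c$ the number of crossed vertices, and by $e$ the number of edges. All edges connect pairs of ordinary vertices, and each crossed vertex carries exactly one decoration in $H^m(W_g)$, so $D = D_{\mathrm{ord}} + c$ where $D_{\mathrm{ord}}$ is the normalized decoration degree sitting on ordinary vertices.

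First I would verify that the weight formula still reads
\[
W = 2(e-v) + D.
\]
This is forced by additivity of the weight under disjoint union: an isolated crossed vertex has weight $1$ and contributes $1$ to $D$, while a connected component with no crossed vertices satisfies $W = 2(e-v)+D$ by the definition of the weight on $\GC_{(g)}$. Recall also $E = e - c$.

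The at-least-trivalence of ordinary vertices then yields
\[
3v \leq 2e + D_{\mathrm{ord}} = 2e + D - c.
\]
Solving $W = 2(e-v)+D$ for $v$, substituting, and rearranging gives the single key inequality
\[
2(e+c) + D \leq 3W.
\]
Since $e, c \geq 0$, this immediately yields $D \leq 3W$. For the refined bound, using $E = e - c$ I would rewrite $2(e+c) = 2E + 4c$, obtaining $D \leq 3W - 2E - 4c$, and hence $D \leq 3W - 2E$ because $c \geq 0$.

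No step of this argument is genuinely difficult; the only real subtlety is the bookkeeping of how each crossed vertex contributes simultaneously to $D$, to the weight $W$, and to the $E$-number, which is precisely why one derives an inequality in $e+c$ that is strictly sharper than what the analogous bound in Lemma \ref{lem:max deco} alone would predict.
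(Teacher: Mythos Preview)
Your proof is correct and follows essentially the same approach as the paper: both arguments combine the weight identity $W=2(e-v)+D$ with the trivalence bound $3v\leq 2e+D-c$ to obtain $D\leq 3W-2e-2c=3W-2E-4c$, and then drop the nonnegative terms $e,c$ (respectively $c$) to conclude. The only cosmetic difference is that you package the intermediate inequality as $2(e+c)+D\leq 3W$, which is the same formula rewritten.
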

The bound $D\leq 3W$ is again saturated by a graph with no edges, and all vertices carrying three decorations.
\begin{proof}
Let $v$ be the number of (non-crossed) vertices in a graph $\Gamma$, $c$ the number of crossed vertices, $e$ the number of edges and $D$ the number of decorations.
To be concrete, a factor in $\osp_g^{(nil)}$ counts as zero non-crossed vertices, one crossed vertex, zero edges, and one decoration.
Then the weight of $\Gamma$ is $W=2(e-v)+D$.
By the trivalence condition we have $3v+c\leq 2v+D$. Eliminating $v$ from this system of inequalities yields $D\leq 3W-2e-2c=3W-2E-4c$, and this is $\leq 3W$ and $\leq 3W-2E$ since $e,c\geq 0$.
\end{proof}

As before, to show Theorem \ref{thm:main CE all} it suffices to show that 
\[
   K_{g,M,W} := \left(\gr^W C_{CE}(\GCex_{(g)}) \otimes V_g^{\otimes M} \right)^{\OSp_g}
\]
has cohomology concentrated in $E$-number $0$, for all $M\leq 3W$. Furthermore, for $2g\geq 3W+M$ we can again invoke Theorem \ref{thm:inv theory} to compute $K_{g,M,W}$ explicitly.
Again $K_{g,M,W}$ may be interpreted as a complex of graphs with two kinds of edges, solid and dashed, with the dashed edges corresponding to copies of the "reduced" diagonal element
\[
\Delta_1=\sum_{i,j=1}^{2g} g_{ij} c_i\otimes c_j \in H^m(W_g)\otimes H^m(W_g),
\]
or graphically 
\[
\begin{tikzpicture}
  \node[int] (v) at (0,0) {};
  \node[int] (w) at (1,0) {};
  \draw (v) edge[dashed] (w) 
        (v) edge +(-.5,-.5) edge +(-.5,.5) edge +(-.5,0)
        (w) edge +(.5,-.5) edge +(.5,.5) edge +(.5,0);
\end{tikzpicture}
=
\sum_{i,j} g_{ij} 
\begin{tikzpicture}
  \node[int, label=0:{$c_i$}] (v) at (0,0) {};
  \node[int, label=180:{$c_j$}] (w) at (1.4,0) {};
  \draw 
        (v) edge +(-.5,-.5) edge +(-.5,.5) edge +(-.5,0)
        (w) edge +(.5,-.5) edge +(.5,.5) edge +(.5,0);
\end{tikzpicture}
\]
In the following we always quietly assume that $2g\geq 3W+M$, so that we are in the "stable" situation.
On $K_{g,M,W}$ the differential is then $d=d_{c}+d_{cut}+d_{mul}''+d_\times$ with the pieces acting as follows
\begin{itemize}
\item $d_c$ sums over (solid) edges contracting them as before.
\item $d_{cut}$ sums over solid edges, replacing them by a diagonal $\Delta=\omega\otimes 1 +1\otimes \omega + \Delta_1$ (with $\omega$ being again the top cohomology class). If the cutting procedure produces a vertex $v$ of valence less than three (and hence necessarily of valence two), then the graph is set to zero.
\item $d_{mul}''$ removes a vertex with precisely 2 incident dashed half-edges and one solid half-edge, by contracting the solid edge, adding one $\omega$-decoration, and fusing the two dashed half-edges. If the dashed half-edges form a tadpole then they are instead removed and the graph is multiplied by $(-1)^m 2g$. 
\begin{align*}
  d_{mul}''
\begin{tikzpicture}
  \node[int] (v) at (0,.4) {};
  \node[int] (w) at (0,-.4) {};
  \draw[dashed] (v) edge +(-.6,.6) edge +(.6,.6);
  \draw (v) edge (w) (w) edge +(-.5,-.5) edge +(0,-.5) edge +(.5,-.5); 
\end{tikzpicture}
&= 
  \begin{tikzpicture}
    \coordinate (v) at (-.6,1);
    \node[int,label=90:{$\omega$}] (w) at (0,-.4) {};
    \draw[dashed] (v) edge[bend right] +(1.2,0);
    \draw  (w) edge +(-.5,-.5) edge +(0,-.5) edge +(.5,-.5); 
  \end{tikzpicture}
  &
  d_{mul}''
  \begin{tikzpicture}
    \node[int] (v) at (0,.4) {};
    \node[int] (w) at (0,-.4) {};
    \draw[dashed] (v) edge[loop above, looseness=40] (v);
    \draw (v) edge (w) (w) edge +(-.5,-.5) edge +(0,-.5) edge +(.5,-.5); 
  \end{tikzpicture}
  &= 
  (-1)^m 2g
  \begin{tikzpicture}
    \node[int,label=90:{$\omega$}] (w) at (0,-.4) {};
    \draw  (w) edge +(-.5,-.5) edge +(0,-.5) edge +(.5,-.5); 
  \end{tikzpicture}
\end{align*}
\item The piece $d_\times$ comes from the $\osp_g^{nil,c}$-coaction on $\G_{(g)}$ and the part of the differential $\G_{(g)}\to \osp_{g}^{nil, c}$. The former piece replaces an $\omega$-decoration by a dashed edge towards a crossed vertex.
Furthermore, it replaces a dashed edge between internal vertices by a crossed vertex and one dashed edge, attached to either side. 
Pictorially
\begin{align}
  \label{equ:d times omega}
d_\times 
\begin{tikzpicture}
  \node[int, label=0:{$\omega$}] (v) at (0,0) {};
  \draw 
        (v) edge +(-.5,-.5) edge +(-.5,.5) edge +(-.5,0)
        ;
\end{tikzpicture}
&=
\begin{tikzpicture}
  \node[int] (v) at (0,0) {};
  \node[cross] (w) at (1,0) {};
  \draw (v) edge[dashed] (w)
        (v) edge +(-.5,-.5) edge +(-.5,.5) edge +(-.5,0)
        ;
\end{tikzpicture}
\\ 
\label{equ:d times ee}
d_\times 
\begin{tikzpicture}
  \node[int] (v) at (0,0) {};
  \node[int] (w) at (1,0) {};
  \draw (v) edge[dashed] (w) 
        (v) edge +(-.5,-.5) edge +(-.5,.5) edge +(-.5,0)
        (w) edge +(.5,-.5) edge +(.5,.5) edge +(.5,0);
\end{tikzpicture}
&=
\begin{tikzpicture}
  \node[int] (v) at (0,0) {};
  \node[int] (w) at (1.6,0) {};
  \node[cross] (x) at (.8,0) {};
  \draw (v) edge[dashed] (x)
        (v) edge +(-.5,-.5) edge +(-.5,.5) edge +(-.5,0)
        (w) edge +(.5,-.5) edge +(.5,.5) edge +(.5,0);
\end{tikzpicture}
\quad + \quad
\begin{tikzpicture}
  \node[int] (v) at (0,0) {};
  \node[int] (w) at (1.6,0) {};
  \node[cross] (x) at (.8,0) {};
  \draw (w) edge[dashed] (x)
        (v) edge +(-.5,-.5) edge +(-.5,.5) edge +(-.5,0)
        (w) edge +(.5,-.5) edge +(.5,.5) edge +(.5,0);
\end{tikzpicture}
\end{align}
Similarly, $d_\times$ acts on dashed edges that are not between two internal vertices in the same manner, and one drops the respective graphs if they are not in the complex. More explicitly, the complete list of cases is as follows, with an empty endpoint of the edge denoting an external leg.

\begin{align}\label{equ d times hair}
  d_\times 
  \begin{tikzpicture}
    \node[int] (v) at (0,0) {};
    \draw (v) edge[dashed] +(1,0)
          (v) edge +(-.5,-.5) edge +(-.5,.5) edge +(-.5,0)
          ;
  \end{tikzpicture}
  &=
  \begin{tikzpicture}
    \node[int] (v) at (0,0) {};
    \node[cross] (w) at (1,0) {};
    \draw (w) edge[dashed] +(1,0)
          (v) edge +(-.5,-.5) edge +(-.5,.5) edge +(-.5,0)
          ;
  \end{tikzpicture}
  \\
  d_\times 
  \begin{tikzpicture}
    \node[int] (v) at (0,0) {};
    \node[cross] (w) at (1,0) {};
    \draw (v) edge[dashed] (w) 
          (v) edge +(-.5,-.5) edge +(-.5,.5) edge +(-.5,0);
  \end{tikzpicture}
  &= 0
  \quad \quad 
  \text{(since }
  \begin{tikzpicture}
    \node[cross] (v) at (0,0) {};
    \node[cross] (w) at (.7,0) {};
    \draw (v) edge[dashed] (w);
  \end{tikzpicture} 
  =0 \text{ by symmetry)}
  \\
  d_\times 
  \begin{tikzpicture}
    \draw[dashed] (0,0)--(1,0);
  \end{tikzpicture}
  &=0.
  \end{align}
  All the above terms, coming from the Lie cobracket, introduce a new crossed vertex but do not alter the number of vertices otherwise.
  The terms stemming from the differential read:
  \begin{align*}
    d_\times
    \begin{tikzpicture}
      \node[int] (v) at (0,0) {};
      \draw[dashed] (v) edge +(30:.6) edge +(150:.6) edge +(-90:.6);
    \end{tikzpicture}
    &= 
    \pm
      \begin{tikzpicture}
        \coordinate (v) at (150:.6);
        \node[cross] (w) at (0,0) {};
        \draw[dashed] (v) edge (30:.6) (w) edge +(-90:.6); 
      \end{tikzpicture}
      +
      \begin{tikzpicture}
        \coordinate (v) at (30:.6);
        \node[cross] (w) at (0,0) {};
        \draw[dashed] (v) edge (-90:.6) (w) edge +(150:.6); 
      \end{tikzpicture}
      +
      \begin{tikzpicture}
        \coordinate (v) at (-90:.6);
        \node[cross] (w) at (0,0) {};
        \draw[dashed] (v) edge (150:.6) (w) edge +(30:.6); 
      \end{tikzpicture}
      &
      d_\times
      \begin{tikzpicture}
        \node[int] (w) at (0,0) {};
        \draw[dashed] (w) edge[loop above, looseness=40] (w) edge +(0,-.6);
      \end{tikzpicture}
      &= 
      (2+(-1)^m 2g)
      \begin{tikzpicture}
        \node[cross,label=90:{$\omega$}] (w) at (0,0) {};
        \draw[dashed]  (w) edge +(0,-.6); 
      \end{tikzpicture}
    \end{align*}
    Here open dashed edges can be connected to internal vertices, or be external legs.
\end{itemize}

To go further define the subspace $\tilde K_{g,M,W}\subset K_{g,M,W}$ to be spanned by graphs that contain one of the following features:
\begin{itemize}
\item A crossed vertex.
\item An $\omega$-decoration.
\item A connected component that is of the form 
\[
\begin{tikzpicture}
  \node[int] (v) at (0,0) {};
  \draw (v) edge[loop, dashed] (v) edge[dashed] +(0,-.5);
\end{tikzpicture} ,
\]
that is, the connected component contains a single vertex with a dashed tadpole and a dashed external leg.
We will temporarily call such connected components \emph{islands} in the following.
\end{itemize}
We will temporarily call the features above the \emph{forbidden features}.

\begin{lemma}
The subspace $\tilde K_{g,M,W}\subset K_{g,M,W}$ is closed under the differential.
\end{lemma}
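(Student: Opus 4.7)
My plan is to check closure under $d = d_c + d_{cut} + d_{mul}'' + d_\times$ by going through each summand of the differential and each of the three forbidden features (a crossed vertex, an $\omega$-decoration, or an island), showing that every non-zero term of $d\Gamma$ inherits at least one forbidden feature whenever $\Gamma$ does. Since the forbidden features are defined locally, this is essentially a case analysis.

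The three pieces $d_c$, $d_{cut}$, and $d_{mul}''$ are easy to handle. None of them introduces or removes a crossed vertex, since they act on configurations of solid edges or on internal vertices with a specific mix of solid/dashed half-edges, while crossed vertices are univalent with a single dashed leg. An $\omega$-decoration is preserved under $d_c$ (which only concatenates the decoration multisets at the endpoints of a contracted edge) and under $d_{cut}$ (which only modifies edges). The only piece that could destroy an existing $\omega$-decoration is $d_{mul}''$, and then only when the multiplied vertex itself carries the $\omega$; in this case the product against any other decoration in $\bar H(W_g)$ vanishes since $\omega$ is the top class, so the resulting term is zero. The island vertex has three dashed half-edges and no solid half-edges, so none of $d_c$, $d_{cut}$, $d_{mul}''$ can act within the island component; any action elsewhere in $\Gamma$ leaves the island untouched.

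The decisive point is that $d_\times$ may simultaneously destroy $\omega$-decorations and islands, yet every non-zero term of $d_\times \Gamma$ acquires a new crossed vertex. Inspection of the explicit formulas in section \ref{sec:CE interpretation} confirms this in all cases: acting on an $\omega$-decoration replaces it by a dashed edge to a new crossed vertex; acting on a dashed edge between two internal vertices (equation \eqref{equ:d times ee}), on a dashed hair (equation \eqref{equ d times hair}), or on a trivalent all-dashed vertex each insert a crossed vertex; and acting on the dashed tadpole of an island produces $(2+(-1)^m 2g)$ times the configuration with a single crossed vertex carrying an $\omega$-label and a dashed out-edge. Hence, even when $d_\times$ removes the feature certifying $\Gamma \in \tilde K_{g,M,W}$, the resulting graph still lies in $\tilde K_{g,M,W}$ via the crossed vertex it now contains. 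The only real obstacle is systematic bookkeeping through all the $d_\times$-cases, including boundary configurations with external legs, crossed-vertex endpoints, and tadpoles; no single case is difficult, but one must be careful not to miss any.
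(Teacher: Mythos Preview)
Your proof is correct and takes essentially the same approach as the paper: a case-by-case check that each piece of $d$ either preserves an existing forbidden feature or introduces a new one. The paper's own proof is just the one-sentence assertion that ``no piece can remove one of the forbidden features above without introducing another,'' so your version is simply the fleshed-out verification; note that your discussion of $d_{mul}''$ is slightly roundabout, since in the $K_{g,M,W}$ description $d_{mul}''$ always produces an $\omega$-decoration, making that case immediate.
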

\begin{proof}
One just goes through the list of pieces of the differential and checks that no piece can remove one of the forbidden features above without introducing another. In other words the number of forbidden features is left the same or increased by the differential.
\end{proof}

\begin{lemma}
The projection to the quotient
\[
K_{g,M,W} \to K_{g,M,W} / \tilde K_{g,M,W}  
\]
is a quasi-isomorphism.
\end{lemma}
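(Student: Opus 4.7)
The conclusion will follow from showing that $\tilde K_{g,M,W}$ is acyclic, since the short exact sequence
\[
0 \to \tilde K_{g,M,W} \to K_{g,M,W} \to K_{g,M,W}/\tilde K_{g,M,W} \to 0
\]
then yields that the projection induces isomorphisms on cohomology.

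My plan is to construct an explicit nullhomotopy $h$ on $\tilde K_{g,M,W}$. Define $h$ on a graph $\Gamma$ by summing over its crossed vertices: each such vertex is univalent and connected by its unique dashed half-edge either to an internal vertex $v$ (in which case $h$ removes the crossed vertex together with its half-edge and places an $\omega$-decoration on $v$) or to an external leg (in which case $h$ restores a dashed external leg at the appropriate internal vertex, together with an $\omega$-decoration when needed). By design $h$ inverts the $d_\times$-moves \eqref{equ:d times omega} and \eqref{equ d times hair}, and one checks directly from the definition that it maps $\tilde K_{g,M,W}$ to itself, since removing a crossed vertex at worst trades it for an $\omega$-decoration, which is itself a forbidden feature.

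The main computation is then to verify that $hd + dh = N\cdot\mathrm{id}$ for a positive-integer-valued diagonal operator $N$ on the graph basis of $\tilde K_{g,M,W}$. The pieces $d_c$, $d_{cut}$, and $d_{mul}''$ of the differential do not create or destroy crossed vertices, and I expect them to commute with $h$ up to cancellations that can be checked term by term (since $h$ acts purely on the crossed-vertex part of a graph). The essential content is therefore the identity
\[
(h d_\times + d_\times h)\Gamma = N(\Gamma)\cdot \Gamma,
\]
where $N(\Gamma)$ receives one contribution for each crossed vertex already present (from $d_\times h$), one for each $\omega$-decoration (from $h$ applied to the $d_\times$-term \eqref{equ:d times omega}), and the factor $2+(-1)^m 2g$ for each island (from $h$ applied to the tadpole formula for $d_\times$). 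Since in the regime of Theorem~\ref{thm:main CE all} we have $g\geq 3W\geq 3$, the coefficient $2+(-1)^m 2g$ is nonzero, and by the very definition of $\tilde K_{g,M,W}$ every nonzero $\Gamma$ contains at least one forbidden feature, so $N(\Gamma)>0$.

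The main obstacle is the careful combinatorial verification of the clean formula above, which requires tracking the sign conventions and all subcases of $d_\times$ (its action on $\omega$-decorations, on dashed edges between internal vertices, on dashed external legs, and on dashed tadpoles, cf.\ \eqref{equ:d times omega}--\eqref{equ d times hair}) and confirming that the cross-terms involving $d_c$, $d_{cut}$, and $d_{mul}''$ indeed cancel. An equivalent, perhaps cleaner, organization is to run a spectral sequence with respect to the descending filtration of $\tilde K_{g,M,W}$ by the number of crossed vertices; the associated graded carries only $d_0 = d_c + d_{cut} + d_{mul}''$, and the higher differential induced by $d_\times$ then performs the cancellation of $\omega$-decorations against crossed vertices and of islands against the tadpole term, using again the nonvanishing of $2+(-1)^m 2g$.
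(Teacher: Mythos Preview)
Your overall strategy (show $\tilde K_{g,M,W}$ is acyclic via a homotopy that trades crossed vertices for $\omega$-decorations) matches the paper's, but two concrete points need repair.

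First, your description of $h$ on a crossed vertex attached to an external leg is not right: you say $h$ ``restores a dashed external leg at the appropriate internal vertex'', but there is no such internal vertex --- after the move \eqref{equ d times hair} the crossed vertex sits on a bare external leg, disconnected from the rest of the graph. The paper's homotopy instead replaces such a crossed vertex by an \emph{island} (a single vertex with a dashed tadpole carrying the external leg), multiplied by $\frac{1}{2+(-1)^m 2g}$. This is why islands are included among the forbidden features in the first place, and why the normalization factor enters; with this choice one gets $(d_\times' h + h d_\times')\Gamma = N(\Gamma)\Gamma$ where $N(\Gamma)$ is literally the count of forbidden features (each with coefficient $1$, not $2+(-1)^m 2g$ as you suggest).

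Second, the hope that $h$ commutes with $d_c$, $d_{cut}$, $d_{mul}''$ against the full differential, or that your alternative filtration by the number of crossed vertices alone will work, is too optimistic. Several pieces of $d_\times$ (e.g.\ \eqref{equ:d times ee} and the tadpole term) create crossed vertices without removing another forbidden feature, and $d_{mul}''$ creates $\omega$-decorations; these do not interact cleanly with $h$. The paper avoids this by filtering by the total number of \emph{forbidden features} (crossed vertices plus $\omega$'s plus islands), so that on the associated graded only $d_c+d_{cut}$ together with the two specific pieces $d_\times'$ of $d_\times$ survive; a second filtration by solid edges then strips off $d_c+d_{cut}$ and leaves exactly the situation where the homotopy identity holds on the nose.
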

\begin{proof}
We introduce a descending filtration on $K_{g,M,W}$ by the number of forbidden features. Concretely, let $\mF^pK_{g,M,W}$ be spanned by graphs with $\geq p$ forbidden features. In particular $\mF^1K_{g,M,W}=\tilde K_{g,M,W}$.
It then suffices to check that the associated graded pieces $\gr^p K_{g,M,W}$ (with respect to the filtration by the number of forbidden features) are acyclic for $p\geq 1$. 
Considering the above description of the differential, the pieces contributing to the associated graded are $d_{cut}$, $d_c$, and the pieces \eqref{equ:d times omega} and \eqref{equ d times hair} of $d_\times$. Let us denote these latter pieces of $d_\times$ by $d_\times'$. We hence have to show that the complex $(\tilde K_{g,M,W},d_c+d_{cut}+d_\times')$ is acyclic.
By taking a further associated graded with respect to the increasing filtration by the number of solid edges it in fact suffices that $(\tilde K_{g,M,W},d_\times')$ is acyclic.


But this can be achieved by introducing the homotopy 
\begin{gather*}
h: \tilde K_{g,M,W} \to \tilde K_{g,M,W} \\
h(\Gamma) = \sum_{x} G_x(\Gamma)
\end{gather*}
with the sum running over all crossed vertices $x$, and $G_x(\Gamma)$ being defined by replacing the vertex $x$ according to the following rule:
\begin{itemize}
\item If $x$ is connected to an internal vertex, remove $x$ and place a decoration $\omega$ at the internal vertex.
\item If $x$ is connected to an external leg, remove $x$, glue in an island instead, and multiply the graph by $\frac 1 {2+(-1)^m 2g}$. Note that since $W\geq 1$ and $g\geq 3W$ we automatically have $2+(-1)^m 2g\neq 0$.
\end{itemize}

One then checks by explicit computation that 
\[
(d_\times'h+hd_\times')(\Gamma) = N(\Gamma) \Gamma,  
\]
with $N(\Gamma)$ being the number of forbidden features.
Since by construction $N(\Gamma)>0$ for any graph $\Gamma\in\tilde K^L$ we are done. 
\end{proof}

Note that the remaining pieces of the differential on $K_{g,M,W}/\tilde K_{g,M,W}$ are $d_c+d_{cut}$, since the $d_x$-terms produce a crossed vertex, and $d_{mul}''$ creates an $\omega$-decoration.
We may now proceed similarly to the previous subsection to compute the cohomology of $K_{g,M,W}/\tilde K_{g,M,W}$.
We define a filtration on $K_{g,M,W}/\tilde K_{g,M,W}$ by defining $\mF^p K_{g,M,W}/\tilde K_{g,M,W}$ to be spanned by graphs with $\leq p$ edges (solid or dashed). The associated graded is identified with $(K_{g,M,W}/\tilde K_{g,M,W},d_{cut})$. As in the previous subsection this differential has a homotopy and we conclude that the inclusion $\mF^0 K_{g,M,W}/\tilde K_{g,M,W}\to K_{g,M,W}/\tilde K_{g,M,W}$ is a quasi-isomorphism.
Since $\mF^0 K_{g,M,W}/\tilde K_{g,M,W}$ is concentrated in $E$-number zero (since it is spanned by graphs with no solid edges and no crossed vertices), we find that the spectral sequence abuts by degree reasons.
Overall we have shown that $H(K_{g,M,W})$ is concentrated in $E$-number zero, thus showing the remaining piece of Theorem \ref{thm:main CE all}.\hfill\qed

\subsection{Formality and presentation}

Let $C$ temporarily denote any of the three Chevalley-Eilenberg complexes above. Denote by $\tru^{[m]} (C)$ the truncation as defined in \eqref{equ:tru sq alpha}. Note that $\tru^{[m]} (C)$ consists of the elements of $C$ of positive $E$-number, and the closed elements of $E$-number zero.
Using the notation $H^{[m]}(C)$ (see \eqref{equ:H sq alpha}) for the part of the cohomology of $C$ of $E$-number zero we have a zig-zag of dg commutative algebras 
\begin{equation}\label{equ:Czigzag}
 C \leftarrow \tru^{[m]} (C) \to H^{[m]}(C)\subset H(C).
\end{equation}
Theorem \ref{thm:main CE all} then implies that this zigzag of dg commutative algebra morphisms induces an isomorphism on the pieces of the cohomology of weight $W\leq \frac g3$. In particular $C$ is formal up to weight $g/3$.
Furthermore note that since the $E$-number for the cases of $C=C_{CE}(\GC_{(g),1})$ and $C=C_{CE}(\GC_{(g),1}^\tp)$ is nonnegative, we have that $\tru^{[m]} (C)=C$ in these cases. Together we find the following Corollary of Theorem \ref{thm:main CE all}.

\begin{cor}
The morphisms of dg commutative algebras 
\begin{align*}
  C_{CE}(\GC_{(g),1}) \to H^{[m]}(C_{CE}(\GC_{(g),1})) =: \tilde A_{(g),1} \\
  C_{CE}(\GC_{(g),1}^\tp) \to H^{[m]}(C_{CE}(\GC_{(g),1}^\tp)) =: \tilde A_{(g),1}^\tp\\
  C_{CE}(\GCex_{(g)}) \leftarrow 
  \tru^{[m]} C_{CE}(\GCex_{(g)})
  \to H^{[m]}(C_{CE}(\GCex_{(g)}))=: \tilde A_{(g)}
\end{align*}
induce isomorphisms on the weight $W$-pieces of the cohomology for $g\geq 3W$.
\end{cor}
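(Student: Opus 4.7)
The plan is to verify, for each of the three dg commutative algebras $C$ above, that the maps in the zigzag \eqref{equ:Czigzag} are well-defined dg commutative algebra morphisms and that each becomes a quasi-isomorphism on $\gr^W(-)$ once $g\geq 3W$.

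First I would recall that in the graphical realizations of the three Chevalley-Eilenberg complexes the $E$-number is bounded below by $-c$ where $c$ counts crossed vertices; in particular for $C = C_{CE}(\GC_{(g),1})$ and $C = C_{CE}(\GC_{(g),1}^\tp)$ the $E$-number is just the number of edges and is therefore nonnegative. Since the cohomological degree of a graph of weight $W$ is $mW-E$, in these two cases $\gr^W C$ is concentrated in cohomological degrees $\leq mW$, and hence $\tru^{\leq mW}(\gr^W C) = \gr^W C$. Consequently $\tru^{[m]}C = C$ in these two cases, and the leftmost arrow of the zigzag is the identity; only the projection to $H^{[m]}(C)$ remains. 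In the third case one genuinely needs the truncation, and here the inclusion $\tru^{[m]}C \hookrightarrow C$ is a morphism of dg commutative algebras because truncation by $\tru^{\leq mW}(\gr^W C)$ in each weight is a subcomplex closed under the commutative product (the product of two elements in weights $W_1,W_2$ and $E$-numbers $E_1,E_2$ has weight $W_1+W_2$ and $E$-number $E_1+E_2$, and if both factors sit in $E$-number $\geq 0$ pieces then so does the product; elements at the top $E$-number 0 of each factor are closed, and their product is closed).

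Next I would observe that the projection $\tru^{[m]}C \to H^{[m]}(C)$ is a morphism of dg commutative algebras with zero differential on the target, since the quotient is by exact elements sitting in the top degree $mW$ of each weight. By Theorem~\ref{thm:main CE all}, as soon as $g\geq 3W$ we have $\gr^W H^k(C)=0$ for $k\neq mW$. Therefore, on the weight $W$ part, the inclusion
\[
\gr^W \tru^{[m]}C \hookrightarrow \gr^W C
\]
is a quasi-isomorphism: both sides agree strictly below degree $mW$; the closed elements at degree $mW$ coincide by construction; and above degree $mW$ both have vanishing cohomology (trivially on the left since the complex is zero there, and by Theorem~\ref{thm:main CE all} on the right). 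Similarly, the projection
\[
\gr^W \tru^{[m]}C \twoheadrightarrow \gr^W H^{[m]}(C)
\]
kills everything below degree $mW$ (which, by Theorem~\ref{thm:main CE all}, is acyclic as a complex in its own right in the stable range, since its cohomology computes $\gr^W H^{<mW}(C)=0$) and identifies $\gr^W H^{mW}(C)$ at the top. Both arrows in \eqref{equ:Czigzag} are therefore quasi-isomorphisms on $\gr^W(-)$, which yields the corollary.

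The only genuine input is Theorem~\ref{thm:main CE all}; the rest is the standard formality argument for complexes whose cohomology concentrates in a single degree, packaged through the truncation/cohomology zigzag. The mildly delicate point is the acyclicity of $\gr^W \tru^{[m]}C$ below degree $mW$ in the third case, but this is immediate once one notes that $\gr^W \tru^{[m]}C$ and $\gr^W C$ have the same cohomology in degrees $<mW$ (agreeing as complexes there) and that the latter vanishes by Theorem~\ref{thm:main CE all}.
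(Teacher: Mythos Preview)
Your proposal is correct and follows essentially the same approach as the paper: use the zigzag $C \leftarrow \tru^{[m]}C \to H^{[m]}(C)$, observe that in the first two cases the $E$-number is nonnegative so that $\tru^{[m]}C=C$ (whence the left arrow is the identity), and invoke Theorem~\ref{thm:main CE all} to see that the cohomology of $\gr^W C$ concentrates in degree $mW$ for $g\geq 3W$, making both arrows quasi-isomorphisms in that weight. The paper treats the fact that $\tru^{[m]}(-)$ and $H^{[m]}(-)$ inherit the dg commutative algebra structure as a general remark in section~\ref{sec:notation}, while you spell it out; otherwise the arguments coincide.
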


In particular, the $E=0$-parts $\tilde A_{(g),1}=H^{[m]}(C_{CE}(\GC_{(g),1}))$ and $\tilde A_{(g),1}^\tp=H^{[m]}(C_{CE}(\GC_{(g),1}^\tp))$ of the cohomology are given by graphs without edges, modulo the differential of graphs with exactly one edge.
Note also that any graph without edges is just a union of single vertices carrying at least three decorations. A graph with a single edge in addition either has a pair of vertices connected by that edge, or the edge forms a tadpole.
In particular, one has the following presentations
\begin{itemize}
  \item $\tilde A_{(g),1}^\tp$ is generated as a graded commutative algebra by the graded vector space $S^{\geq 3} (H^m(W_{g,1})[-m])$. We denote the generator corresponding to a monomial $a_1\cdots a_k\in S^{\geq 3} (H^m(W_{g,1})[-m])$ by $c(a_1\cdots a_k)$.
  Then the relations read, for $(e_i)$ a basis of $H^m(W_{g,1})$ and $(e_i^*)$ the Poincar\'e dual basis,
  \begin{align}\label{equ:Agrel1}
    c(a_1\cdots a_k) &= 
    \sum_{i=1}^{2g} c(a_1\cdots a_l e_i)
    c(e_i^* a_{l+1}\cdots a_k)
    &\text{for $l=2,\dots k-2$} \\
    \label{equ:Agrel2}
    \sum_{i=1}^{2g}  c(a_1\cdots a_ke_ie_i^*) &= 0.
  \end{align}
  \item $\tilde A_{(g),1}^\tp$ is also generated as a graded commutative algebra by the graded vector space $S^{\geq 3} (H^m(W_{g,1})[-m])$, but only with the relation \eqref{equ:Agrel1}, but not \eqref{equ:Agrel2}.
\end{itemize}

\subsection{Stabilization and degrees}
We note that one has the following lower degree bounds on the complexes above.

\begin{cor}
  \begin{enumerate}[(i)]
\item The maximum $E$-number of a graph $\Gamma$ of weight $W$ in $C_{CE}(\GC_{(g),1})$ or $C_{CE}(\GC_{(g),1}^\tp)$ or $C_{CE}(\GCex_{(g)})$ is $\frac 32 W$.
\item The minimal cohomological degree of a graph $\Gamma$ of weight $W$ in $C_{CE}(\GC_{(g),1})$ or $C_{CE}(\GC_{(g),1}^\tp)$ or $C_{CE}(\GCex_{(g)}$ is $(m-\frac 32)W$.
\end{enumerate}
\end{cor}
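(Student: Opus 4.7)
The plan is to deduce both bounds from two elementary inequalities on the combinatorial parameters of a graph, essentially refining the bookkeeping already carried out in the proof of Lemma \ref{lem:max deco 2}.

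For a graph $\Gamma$ in one of the three Chevalley-Eilenberg complexes, let $v$ be the number of at-least-trivalent (``normal'') vertices, $c$ the number of crossed vertices (so $c=0$ for $C_{CE}(\GC_{(g),1})$ and $C_{CE}(\GC_{(g),1}^\tp)$), $e$ the number of edges, and $mD$ the total decoration degree. By the definitions given in section \ref{sec:CE interpretation} one has
$$W = 2(e-v) + D, \qquad E = e - c, \qquad k = mW - E.$$

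I would then record two inequalities. The first is $D \geq 0$, which combined with the weight formula gives $e \leq v + W/2$. The second is the trivalence condition: since every decoration in $\bar H(W_g)$ has degree at least $m$, the number of symmetric-product decoration factors is at most $D$, and since each crossed vertex is univalent while each normal vertex has valence at least three, counting half-edges yields
$$3v + c \leq 2e + D.$$
Substituting $D = W - 2(e-v)$ into the right-hand side produces $v + c \leq W$.

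For part (i), combining these two bounds immediately gives
$$E = e - c \leq v + \tfrac{W}{2} - c \leq (W - 2c) + \tfrac{W}{2} = \tfrac{3W}{2} - 2c \leq \tfrac{3W}{2},$$
and the bound in the case $c=0$ is the same (and slightly easier). Part (ii) is then immediate from $k = mW - E \geq (m-\tfrac{3}{2})W$.

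There is no real obstacle: the argument is purely combinatorial bookkeeping. The only point that requires some attention is the correct form of the trivalence inequality in the presence of crossed vertices, where the term $+c$ appears on the left-hand side precisely because the crossed vertices are univalent (carrying one decoration and no incident edges), rather than being at-least-trivalent.
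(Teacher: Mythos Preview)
Your proof is correct and follows essentially the same approach as the paper: the paper simply cites Lemmas~\ref{lem:max deco} and~\ref{lem:max deco 2} to obtain $D\leq 3W-2E$, and then combines this with $D\geq 0$ to get $E\leq \tfrac{3}{2}W$, whereas you unpack those lemmas inline by re-deriving the trivalence inequality $3v+c\leq 2e+D$ and the consequence $v+c\leq W$. Part (ii) is deduced identically in both cases from $k=mW-E$.
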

\begin{proof}
Let $D$ be the number of decorations and $E$ the $E$-number of $\Gamma$. Then by Lemmas \ref{lem:max deco} and \ref{lem:max deco 2} we have the inequality $D\leq 3W-2E$. Hence $2E\leq 3W-D\leq 3W$ since $D\geq 0$. This shows the first statement. The second follows from the formula for the cohomological degree $k$, $k=mW-E$.
\end{proof}

Using this result we see that for $m\geq 2$ and each fixed $k$ only finitely many weights $W$ contribute to the degree $k$ piece of our Chevalley-Eilenberg complexes.
Hence the degree $k$ cohomology also stabilizes (in the sense discussed) for $g\to \infty$. Here we just state for reference the corresponding stabilization result in degree that is an immediate reformulation of Proposition \ref{prop:main CE refined 1}.

\begin{cor}
  Suppose that $m\geq 2$. 
Then the maps 
\begin{align*}
  H_{CE}^k(\GC_{(g),1}) &\to \tilde A_{(g),1}^k \\
  H_{CE}^k(\GC_{(g),1}^\tp) &\to (\tilde A_{(g),1}^\tp )^k 
\end{align*}
are isomorphisms as soon as $g\geq 2k$.
\end{cor}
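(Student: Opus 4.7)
The plan is to deduce the corollary directly from Proposition \ref{prop:main CE refined 1} together with the weight-truncation mechanism set up around the functor $H^{[m]}$. First I recall that, by the previous corollary on minimal cohomological degree, a graph of weight $W$ in $C_{CE}(\GC_{(g),1})$ or $C_{CE}(\GC_{(g),1}^\tp)$ has degree at least $(m-\tfrac32)W$. Hence for $m\geq 2$ and fixed $k$, only the (finitely many) weights $W\leq 2k/(2m-3)$ can contribute to cohomological degree $k$, and $H_{CE}^k$ decomposes as a finite direct sum $\bigoplus_W \gr^W H_{CE}^k$, so there are no convergence or completeness subtleties. By the definition $H^{[m]}V = \bigoplus_k \gr^k H^{mk}(V)[-mk]$, the target $\tilde A_{(g),1}^k$ is nonzero only when $m\mid k$, in which case it equals $\gr^{k/m} H_{CE}^k(\GC_{(g),1})$, and the asserted map is nothing but the projection onto this summand. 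Analogously for $\tilde A_{(g),1}^\tp$.

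It therefore suffices to show that $\gr^W H_{CE}^k(\GC_{(g),1}) = \gr^W H_{CE}^k(\GC_{(g),1}^\tp) = 0$ for every $W \neq k/m$ as soon as $g\geq 2k$. If $k > mW$, this is case (i) of Proposition \ref{prop:main CE refined 1}, with no constraint on $g$. If $k < mW$, case (ii) of that proposition requires $g \geq 2k - (2m-3)W + 1$. Since $m\geq 2$ and $W\geq 1$, one has $(2m-3)W \geq 1$, and hence $2k \geq 2k - (2m-3)W + 1$; thus $g \geq 2k$ implies the hypothesis of (ii).

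Combining the two cases, every weight $W \neq k/m$ contributes trivially to $H_{CE}^k$, so the projection to the surviving weight $W=k/m$ summand (i.e.\ to $\tilde A^k$) is an isomorphism. Since no delicate spectral sequence or invariant-theoretic computation beyond what is already established in Proposition \ref{prop:main CE refined 1} is needed, there is no real obstacle; the entire argument is essentially the observation that the inequality $(2m-3)W\geq 1$ turns the weight-dependent bound of Proposition \ref{prop:main CE refined 1}(ii) into the weight-independent bound $g\geq 2k$. The only point that merits a line is pointing out why the decomposition of $H_{CE}^k$ by weight is actually a finite direct sum (so that the vanishing of all off-diagonal summands yields the isomorphism globally, not just on associated gradeds), which is precisely the content of the degree bound $k\geq (m-\tfrac32)W$.
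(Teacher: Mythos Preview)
Your argument is correct and is essentially the same as the paper's: the key step is precisely the observation that for $m\geq 2$ and $W\geq 1$ one has $(2m-3)W\geq 1$, so the bound $g\geq 2k-(2m-3)W+1$ of Proposition~\ref{prop:main CE refined 1}(ii) is implied by $g\geq 2k$. The paper states this in one line, while you have additionally spelled out why only finitely many weights contribute and why the map is the projection onto the $W=k/m$ summand, which is helpful but not different in substance.
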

\begin{proof}
One just notes that $(2m-3)W\geq W\geq 1$ in the statement of Proposition \ref{prop:main CE refined 1}.
\end{proof}



\subsection{Invariant part}
Setting $M=0$ in the proofs of sections \ref{sec:main CE all proof 1} and \ref{sec:main CE all proof 2} we see that we have in particular computed the cohomology of the $\OSp_g$-invariant piece of the Chevalley-Eilenberg complex.

\begin{prop}
  \begin{enumerate}
\item For $2g\geq 2k-(2m-3W) +2$ or $k>mW$ we have that $\gr^W H^k_{CE}( \GC_{(g),1}^\tp)^{\OSp_g} =0$.
\item 
Let $\kappa_j$ represent the graph 
\[
\kappa_j  
=
\begin{tikzpicture}
\node[int, label={$\Delta_1^{j+1}$}] (v) at (0,0) {};
\end{tikzpicture}
\]
Then the map
$$\gr^W\Q[\kappa_1,\kappa_2,\dots]\to \gr^W H_{CE}(\GC_{(g),1})^{\OSp_g}$$ is an isomorphism in degree $k$ if $2g\geq 2k-(2m-3W) +2$ or $k>mW$. 
\item   
Similarly, the map 
$$
\gr^W\Q[\kappa_1,\kappa_2,\dots]\to \gr^W H_{CE}(\GCex_{(g)})^{\OSp_g}$$
is an isomorphism for $2g\geq 3W$.   
  \end{enumerate}
\end{prop}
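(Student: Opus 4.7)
The plan is to set $M=0$ in the arguments of Sections \ref{sec:main CE all proof 1} and \ref{sec:main CE all proof 2} and read off the $\OSp_g$-invariant cohomology directly. By Lemma \ref{lem:complex inv} the invariant cohomology in question agrees with the cohomology of $J_{g,0,W}$ (respectively $K_{g,0,W}$ for $\GCex_{(g)}$), and by Theorem \ref{thm:inv theory} combined with the decoration bounds of Lemmas \ref{lem:max deco} and \ref{lem:max deco 2} this identifies with the stable ``dashed-graph'' model $J_{\infty,0,W}$ (respectively $K_{\infty,0,W}$) as soon as $2g$ exceeds the relevant bound: in the absence of external legs the stability condition at cohomological degree $k$ for $\GC_{(g),1}^\tp$ and $\GC_{(g),1}$ relaxes to $2g \geq 2k + 2 - (2m-3)W$, whereas the uniform decoration bound for $\GCex_{(g)}$ forces $2g \geq 3W$.

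For part (ii), I will revisit the homotopy argument of Section \ref{sec:main CE all proof 1}: it splits $(J_{\infty,0,W}, d_{cut})$ into a subcomplex $V_+$ of graphs carrying an edge between two distinct vertices (killed by the homotopy $h$) and a complement $V_0$. Since $\GC_{(g),1}$ forbids tadpoles, $V_0$ consists exactly of disjoint unions of single vertices whose decorations are paired into copies of $\Delta_1$, i.e.\ products of the $\kappa_j$'s. The contraction piece $d_c$ acts trivially on edge-free graphs, so the spectral sequence collapses at the first page, yielding $\gr^W\Q[\kappa_1,\kappa_2,\dots] \xrightarrow{\cong} \gr^W H_{CE}(\GC_{(g),1})^{\OSp_g}$ in the claimed range.

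For part (i), the same decomposition applies, but in $\GC_{(g),1}^\tp$ the space $V_0$ now also contains single-vertex components carrying some number of solid tadpoles. I will verify that $d_{cut}$ converts a solid tadpole into a dashed one: the $\omega$-components of the diagonal $\Delta$ vanish in the quotient defining $\GC_{(g),1}^\tp$, while the $\Delta_1$-piece is absorbed into the decoration. Writing $T_{s,d}$ for the single-vertex component with $s$ solid and $d$ dashed tadpoles (trivalence forcing $s+d \geq 2$) and fixing the total tadpole count $n = s + d$, this produces the elementary chain $T_{n,0} \xrightarrow{n} T_{n-1,1} \xrightarrow{n-1} \cdots \xrightarrow{1} T_{0,n}$, which is visibly acyclic. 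Taking the symmetric product over connected components and invoking K\"unneth then shows $V_0$ is acyclic in positive weight, forcing the invariant cohomology to vanish, which proves part (i).

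Part (iii) will be obtained by the same mechanism after passing to the quasi-isomorphic quotient $K_{\infty,0,W}/\tilde K_{\infty,0,W}$ from Section \ref{sec:main CE all proof 2}: for $M=0$ the subcomplex $\tilde K$ kills exactly the graphs carrying an $\omega$-decoration or crossed vertex (islands being automatically absent), and the residual differential on the quotient reduces to $d_c + d_{cut}$ on tadpole-free graphs, bringing the computation back to the setting of part (ii). The main obstacle will be a bookkeeping one: verifying that the surviving $E=0$ classes $\kappa_j$, which are only closed modulo $\tilde K$ at the chain level because $d_\times \kappa_j$ is generically non-zero, lift through the quasi-isomorphism to the intended cohomology classes, so that the algebra map from $\Q[\kappa_1,\kappa_2,\dots]$ is well-defined and an isomorphism.
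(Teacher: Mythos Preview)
Your approach is the same as the paper's (set $M=0$ in the earlier arguments), and parts (ii) and (iii) are handled correctly. However, your treatment of part (i) contains a genuine error in the chain analysis.

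The complex you write down, $T_{n,0}\xrightarrow{n}T_{n-1,1}\xrightarrow{n-1}\cdots\xrightarrow{1}T_{0,n}$ with all terms one-dimensional, cannot be right: the composite of two consecutive maps would be $s(s-1)\neq 0$, contradicting $d_{cut}^2=0$. What actually happens is that $T_{s,d}=0$ whenever $s\geq 2$, because swapping two solid tadpoles at the same vertex is an automorphism acting with sign $(-1)^{(2m-1)^2}=-1$ on the edge factors. So the only surviving piece of your chain is the two-term complex $T_{1,n-1}\to T_{0,n}$, and you must then check by hand that this single map is an isomorphism (it is: cutting the unique solid tadpole inserts one more factor of $\Delta_1$). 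Once corrected this way your argument goes through, but it is more work than needed.

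The paper's implicit route for (i) is simpler: in the tadpole-allowed complex the homotopy $h$ may be taken to un-dash \emph{all} dashed edges, including dashed tadpoles, since the resulting solid tadpole is legal in $\GC_{(g),1}^\tp$. Then $(hd_{cut}+d_{cut}h)\Gamma=N(\Gamma)\Gamma$ with $N(\Gamma)$ counting every edge; for $M=0$ there are no legs, so any nonempty graph has $N>0$ and $(J_{\infty,0,W},d_{cut})$ is acyclic outright. Your splitting into $V_+$ and $V_0$ is only forced in the tadpole-free case (ii), where un-dashing a tadpole is illegal.

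Your closing remark about (iii) is well taken: $d_\times\kappa_j$ is indeed nonzero in $C_{CE}(\GCex_{(g)})$, so the $\kappa_j$ are cocycles only in the quotient $K/\tilde K$, and the map of the proposition must be read through the quasi-isomorphism $K\to K/\tilde K$. The paper is tacit on this point; your formulation is the honest one.
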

\begin{proof}
One follows again the proofs of sections \ref{sec:main CE all proof 1} and \ref{sec:main CE all proof 2}, but with $M$ in these proofs set to zero.
The condition on $g$ (for the cohomology to agree with the stable one) in section \ref{sec:main CE all proof 1} then becomes $2g\geq M+2k-(2m-3W) +2=2k-(2m-3W) +2$. For the less refined argument of section \ref{sec:main CE all proof 2} it becomes $2g\geq 3W$.

Hence it suffices to consider the stable cohomology for $M=0$. Revisiting the proofs above this is computed to be 0 for $\GC_{(g),1}^\tp$, and (freely)
 spanned by classes without any edges (i.e., unions of the $\kappa_j$'s) for the other two cases.
\end{proof}





\section{Koszulness and quadratic presentations of the cohomology}\label{sec:Koszul}

We have shown so far that for large enough $g$ the cohomology of the weight $W$ part of our dg Lie algebras $\GC_{(g),1}$, $\GC_{(g),1}^\tp$ and $\GCex_{(g)}$ becomes concentrated in the single degree $(1-m)W$.
We have also shown that the cohomology of the Chevalley-Eilenberg complex becomes concentrated in degree $mW$.
In this section we shall derive quadratic presentations of the cohomology in those degrees, and show that both cohomologies (i.e., the cohomology of the Lie algebra, and that of its Chevalley-Eilenberg complex) are Koszul dual (in a range, for $g$ large).

\subsection{A version of Proposition \ref{prop:cohom conc follows koszul}}
We first formulate a slightly technical version of Proposition \ref{prop:cohom conc follows koszul} that is appropriate for our setting.

\begin{prop}\label{prop:cohom conc follows koszul 2}
  Let $W_0\geq 2$ and $\alpha$ be integers.
  Suppose that $\fg$ is a dg Lie algebra with an additional positive (``weight") grading, satisfying the following properties.
  \begin{itemize}
    \item Each graded piece $\gr^W\fg$ is finite dimensional.
    \item $\gr^W H(\fg)$ is concentrated in cohomological degree $\alpha W$ for any $W\leq W_0$.
    \item The cohomology of the weight $W$ piece of the cobar construction $\gr^W \Bar^c \fg^c$ is concentrated in cohomological degree $(1-\alpha)W$ for any $W\leq W_0$.
  \end{itemize}
  Then the following hold:
  \begin{enumerate}[(i)]
    \item $\fg$ is formal up to weight $W_0$ in the sense that the zigzag of morphisms of dg Lie algebras
    \[
    \fg \leftarrow \tru^{[\alpha]}\fg \to H^{[\alpha]}(\fg)   
    \] 
    (see \eqref{equ:tru sq alpha}, \eqref{equ:H sq alpha} for the notation) induces a zigzag of quasi-isomorphisms of complexes on the part of weight $W$ for any $W\leq W_0$.
    \item Similarly, the dg commutative algebra $\Bar^c \fg^c$ is formal up to weight $W_0$ in the sense that the zigzag of morphisms of dg commutative algebras 
    \[
      \Bar^c \fg^c \leftarrow \tru^{[1-\alpha]}
      \Bar^c \fg^c \to H^{[1-\alpha]}(\Bar^c \fg^c)
    \]
    induces quasi-isomorphisms on the part of weight $W\leq W_0$.

    \item Let $V:=\gr^1 H(\fg)\cong \gr^1 H(\Bar \fg)[-1]$, $R:= \gr^2H(\Bar \fg)$ and $S:=\gr^2H(\fg^c)$. Then the maps 
    \begin{align}\label{equ:kos2 inj}
      R &\to \wedge^2V & S\to S^2(V[-1]^*)  
    \end{align}
    given by the (reduced) coproduct and the cobracket are injections. We denote the images by $R$ and $S$ as well, abusing notation.
    Furthermore, $S$ is (up to degree shift) the annihilator of $R$ if we identify $(\wedge^2V)^*\cong S^2(V[-1]^*)[2]$.
    \item The graded Lie algebra $\ft$ and graded commutative algebra $A$ defined via the quadratic presentations
    \begin{align*}
      \ft &= \FreeLie(V)/R & 
      A= S(V^*[-1]) / S.
    \end{align*}
    are Koszul dual to each other.
    \item The maps of graded Lie, respectively graded commutative algebras
    \begin{align}\label{equ:xxx3}
      \ft &\to H(\fg) & 
      A&\to H(\Bar^c \fg^c)
    \end{align}
    defined by the obvious inclusion of generators are well-defined, respect the weight grading and induce isomorphisms on the parts of weight $W$ for each $W\leq W_0$.
    \item The pair $\ft$ and $A$ are Koszul up to weight $W_0$ in the sense that the cohomology of $\gr^W\Bar \ft$ (respectively $\gr^W\Bar A$) is concentrated in cohomological degree $(\alpha-1) W$ (respectively $-\alpha W$) for $W\leq W_0$.
  \end{enumerate}
\end{prop}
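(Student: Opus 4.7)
The plan is to mirror the proof of Proposition~\ref{prop:cohom conc follows koszul}, tracking the weight bound $W \le W_0$ throughout; the essential point is that every (co)bar construction appearing in weight $W$ only depends on weights $\le W$, so the arguments transport cleanly to the truncated range.

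Parts~(i) and~(ii) are immediate from the hypotheses. Concentration of $\gr^W H(\fg)$ in degree $\alpha W$ for $W \le W_0$ is exactly the condition that makes the natural inclusion and projection in the zigzag $\fg \leftarrow \tru^{[\alpha]}\fg \to H^{[\alpha]}(\fg)$ quasi-isomorphisms in those weights, and the truncation functors are compatible with the dg Lie structure by the discussion in Section~\ref{sec:notation}. The identical argument applies to $\Bar^c \fg^c$ using the concentration in degree $(1-\alpha)W$.

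For part~(iii), I would inspect the weight-two piece of $\Bar^c \fg^c \cong S(\fg^c[-1])$, which is the two-term complex
\[
\gr^2 \fg^c[-1] \xrightarrow{d} S^2\bigl(\gr^1 \fg^c[-1]\bigr)
\]
with $d$ induced by the Lie cobracket. The concentration hypothesis forces the cohomology to sit entirely in the right-hand term, so the induced map $\gr^2 H(\fg^c) \hookrightarrow S^2(\gr^1 H(\fg^c))$ is injective with image identified with $S$. A dual inspection of $\Bar \fg$ gives the injection $R \hookrightarrow \wedge^2 V$; the finite-dimensionality of each weight piece together with the non-degenerate duality between $\Bar \fg$ and $\Bar^c \fg^c$ identifies $S$ with $R^\perp$. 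Part~(iv) then holds by construction and the previous lemma. For part~(v), the maps \eqref{equ:xxx3} are well defined since, by (iii), $R$ and $S$ map to zero in the respective targets; to see that they are isomorphisms in weights $\le W_0$, one repeats the zigzag argument from the proof of Proposition~\ref{prop:cohom conc follows koszul}, obtaining a chain $H(\fg) \leftarrow \Bar^c \Bar H(\fg) \to \bullet \leftarrow \Bar^c A^c$ of weight-preserving quasi-isomorphisms in the range $W \le W_0$ and reading off the generators and quadratic relations from $\Bar^c A^c$.

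The main obstacle is part~(vi). Using part~(v), $\ft$ coincides with $H(\fg)$ in weights $\le W$ for every $W \le W_0$, and since $\gr^W \Bar \ft$ only depends on $\gr^{\le W}\ft$, one has a zigzag
\[
\Bar \ft \xleftarrow{\sim} \Bar \tru^{[\alpha]}\fg \xrightarrow{\sim} \Bar \fg
\]
of weight-preserving quasi-isomorphisms in weights $\le W_0$, using that the bar functor preserves weight-wise quasi-isomorphisms. Dualizing weight by weight (permitted by the finite-dimensionality hypothesis), $\gr^W H(\Bar \ft)$ is identified with the graded dual of $\gr^W H(\Bar^c \fg^c)$, which by assumption is concentrated in degree $(1-\alpha)W$, so $\gr^W H(\Bar \ft)$ is concentrated in degree $(\alpha-1)W$ as required. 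The analogous statement for $A$ is symmetric, or follows directly from the equivalence of Koszulness for a quadratic Lie algebra and its Koszul-dual commutative algebra established in the previous lemma.
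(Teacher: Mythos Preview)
Your proof is correct and follows essentially the same route as the paper. One small imprecision: in part~(iii) you describe the weight-two piece of $\Bar^c\fg^c$ as ``the two-term complex $\gr^2\fg^c[-1]\to S^2(\gr^1\fg^c[-1])$,'' which ignores the internal differential on $\fg^c$; the paper sidesteps this by first passing via the formality zigzags to $\Bar^c(H^{[\alpha]}(\fg)^c)$, where the internal differential vanishes, and only then reads off the two-term complex \eqref{equ:xxx1}---your subsequent use of $\gr^2 H(\fg^c)$ shows you know the fix, so this is a matter of write-up rather than a gap.
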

\begin{proof}
Items (i) and (ii) are immediate by the assumptions on the cohomology in weights $W\leq W_0$.

By (i) and (ii) we then know that the following zigzags of dg Lie or dg commutative algebras 
\begin{gather}\label{equ:xxx2a}
\fg \xleftarrow{\sim} \Bar^c\Bar \fg
\to \Bar^c (\tru^{[1-\alpha]}\Bar^c \fg^c)^c
\leftarrow \Bar^c (H^{[1-\alpha]}(\Bar^c \fg^c))^c \\
\Bar^c \fg^c \to
\Bar^c((\tru^{[\alpha]} \fg)^c)
\leftarrow 
\Bar^c(H^{[\alpha]} (\fg)^c)
\label{equ:xxx2b}
\end{gather}
are quasi-isomorphisms up to weight $W_0$.
By assumption the cohomology of the weight $W$ part of the left-hand sides is concentrated in degree $\alpha W$ 
(resp. $(1-\alpha)W$) for $W\leq W_0$.

Proof of (iii): Since $W_0\geq 2$ by assumption this means in particular that $\gr^2\Bar^c(H^{[\alpha]} (\fg)^c)$ is concentrated in degree $2-2\alpha$. 
But the complex $\gr^2\Bar^c(H^{[\alpha]} (\fg)^c)$ has the form 
\begin{equation}\label{equ:xxx1}
  \gr^2 H(\fg)^*[-1]
\to 
  S^2(V^*[-1]),
\end{equation}
with the differential given by the cobracket, the left-hand term living in degree $1-2\alpha$, and the right-hand term living in degree $2-2\alpha$. Since the cohomology is concentrated in degree $2-2\alpha$ the differential must be injective.
Similarly, one checks that the coproduct map of \eqref{equ:kos2 inj} is injective.
Furthermore, the cohomology of \eqref{equ:xxx1} is $R^*$, and is identified with the dual of the cokernel of the differential, that is the annihilator of the image $S$. Hence the final statement of (iii) of the proposition follows.

(iv) is immediate from (iii).

(v) It is clear that the maps preserve the weight grading.
To check that the maps are well-defined we just have to check that the relations $R$ (resp. $S$) are sent to zero, and this can be checked in the part of weight $2$. But the weight 2 part of $H(\Bar^c\fg^c )$ is the cohomology of \eqref{equ:xxx1}, and hence the image of $S$ is zero essentially by definition.
Similarly, the weight 2 part of $H(\fg)$ is the cohomology of $\gr^2\Bar^c H(\Bar\fg)$ and again $R$ is identified with the image of the differential, and hence zero in cohomology.

Next, consider the cobar construction $\Bar^c (H^{[1-\alpha]}(\Bar^c \fg^c)^c)$. In weight $W$ its piece of top cohomological degree is identified with $\gr^W\FreeLie(V)$, in degree $\alpha W$. The piece of top-minus-1-degree has precisely one factor of $R$. The top degree cohomology is hence identified with $\gr^W \ft$, so that 
\begin{equation*}
  \ft=H^{[\alpha]}(\Bar^c H^{[1-\alpha]}(\Bar^c \fg^c)^c).
\end{equation*}
Since we know that the zigzag \eqref{equ:xxx2a} of dg Lie algebra morphisms induces a zigzag of isomorphisms in weight $\leq W_0$-cohomology, we obtain that $\gr^W\ft\cong \gr^W H(\fg)$ for $W\leq W_0$.
This isomorphism is the identity $V\cong V$ in weight $1$ and the Lie brackets are respected, at least up to weight $W_0$. Hence it agrees with the fist map of \eqref{equ:xxx3}, which hence also is an isomorphism in weights $\leq W_0$.
One argues similarly for the second map of \eqref{equ:xxx3}, which agrees with the top degree cohomology map induced by \eqref{equ:xxx2b} in weights $\leq W_0$, and is hence an isomorphism in these weights.

For (vi) note that by (v) $\Bar \fg$ and $\Bar\ft$ are quasi-isomorphic in weights $\leq W_0$.
But the statement follows directly from the assumption on the cohomology of $\Bar \fg$.
Similarly, $\Bar A$ is quasi-isomorphic to $\Bar H(\Bar^c \fg)=\Bar H^{[1-\alpha]}(\Bar^c \fg)$ in weights $\leq W_0$  by (v).
Hence, since \eqref{equ:xxx2a} consists of quasi-isomorphisms in weights $\leq W_0$, the remaining statement of (vi) follows by the assumption on $H(\fg)$.
\end{proof}

\subsection{Main results}\label{sec:Koszul results}
Suppose that $g\geq 6$. We can apply the general construction of the previous subsection to the case of $\fg$ being either of $\GC_{(g),1}$, $\GC_{(g),1}^\tp$ or $\GCex_{(g)}$.
In each case it is shown in Theorem \ref{thm:main cohom GC vanishing} that the cohomology of $\gr^W\fg$ is concentrated in cohomological degree $\alpha W$ with $\alpha=(1-m)$, in the range $W\leq W_0=g/3\geq 2$ (since $3W\geq W+2$).
Furthermore, we know by Theorem \ref{thm:main CE all} that in the same $W$-range the cohomology of $\gr^W\Bar^c \fg^c$ is concentrated in degree $(1-\alpha)W=mW$.
Hence Proposition \ref{prop:cohom conc follows koszul 2} is applicable and we find that $H(\fg)$ and $H_{CE}(\fg)$ form a Koszul pair, in the given weight range. 
The spaces of generators are then
\begin{align*}
  V_{(g),1}^\tp 
  &= 
  \gr^1 H(\GC_{(g),1}^\tp)
   =\gr^1 H_{CE}(\GC_{(g),1}^\tp)^*[1]  \\
  V_{(g),1} 
  &= 
  \gr^1 H(\GC_{(g),1}) 
  = \gr^1 H_{CE}(\GC_{(g),1})^*[1]
  \\
  V_{(g)} 
  &= 
  \gr^1 H(\GCex_{(g)})
  =
  \gr^1 H_{CE}(\GCex_{(g)})^*[1].
\end{align*}
The spaces of relations can be identified with 
\begin{align*}
  R_{(g),1}^\tp &= \gr^2 H(A_{(g),1}^\tp)^* 
  \\
  R_{(g),1} &= \gr^2 H(A_{(g),1})^*
  \\
  R_{(g)} &= \gr^2 H(A_{(g)})^* 
\end{align*}

We hence find the following result, extending Theorem \ref{thm:main t}.
\begin{thm}\label{thm:presentations}
The morphisms of graded Lie algebras
\begin{align*}
\ft_{(g),1}&:=\FreeLie(V_{(g),1}) / \langle R_{(g),1} \rangle \to H(\GC_{(g),1})  \\
\ft_{(g),1}^\tp&:=
\FreeLie(V_{(g),1}^\tp) / \langle R_{(g),1}^\tp \rangle \to H(\GC_{(g),1}^\tp) \\
\ft_{(g)}&:=
\FreeLie(V_{(g)}) / \langle R_{(g)} \rangle
\to H(\GC_{(g)}^{\ex}) 
\end{align*}
and the morphisms of graded commutative algebras 
\begin{align*}
  A_{(g),1}&:=\FreeCom(V_{(g),1}^*[1]) / \langle R_{(g),1}^\perp \rangle \to H_{CE}(\GC_{(g),1})  \\
  A_{(g),1}^\tp&:=
  \FreeCom((V_{(g),1}^\tp)^*[1]) / \langle (R_{(g),1}^\tp)^\perp \rangle \to H_{CE}(\GC_{(g),1}^\tp) \\
  A_{(g)}&:=
  \FreeCom(V_{(g)}^*[1]) / \langle R_{(g)}^\perp \rangle
  \to H_{CE}(\GC_{(g)}^{\ex}) 
  \end{align*}
defined on generators by the obvious inclusion induce isomorphisms on the parts of weight $W$ as long as $g\geq 3W\geq 6$. Furthermore, the pairs $(\ft_{(g),1}, A_{(g),1})$ and $(\ft_{(g),1}^\tp, A_{(g),1}^\tp)$ and $(\ft_{(g)}, A_{(g)})$ are Koszul in the same range of weights, in the sense of Proposition \ref{prop:cohom conc follows koszul 2}.
\end{thm}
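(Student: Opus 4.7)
The plan is to deduce Theorem \ref{thm:presentations} directly from Proposition \ref{prop:cohom conc follows koszul 2}, applied separately to each of the dg Lie algebras $\fg \in \{\GC_{(g),1},\, \GC_{(g),1}^\tp,\, \GCex_{(g)}\}$, with $\alpha = 1-m$ and $W_0 = \lfloor g/3 \rfloor$. The genus hypothesis $g \geq 6$ in the theorem corresponds precisely to $W_0 \geq 2$, which is the only extra assumption the proposition needs beyond the concentration statements. The concrete identification of generators and relations in the theorem will then come for free from parts (iii)--(iv) of the proposition.

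Next I would check the three hypotheses in the asserted weight range. Finite-dimensionality of $\gr^W \fg$ is immediate from the combinatorial definition, since the identity $W = 2(e-v) + D$ together with the trivalence constraint bounds the number of vertices, edges and decorations. For concentration of $\gr^W H(\fg)$ in cohomological degree $(1-m)W$ when $W \leq g/3$, I would invoke Theorem \ref{thm:main cohom GC vanishing}: part (i) supplies the lower bound $k \geq (1-m)W$ for $\GC_{(g),1}$ and $\GC_{(g),1}^\tp$, part (ii) does the same for $\GCex_{(g)}$ (valid as soon as $g \geq 2$), and part (iii) gives the matching upper bound under the assumption $g \geq W+2$. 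The inequality $g \geq 3W \geq 6$ certainly implies $g \geq W+2$, so the upper bound applies throughout $W \leq g/3$. For the third hypothesis, concentration of $\gr^W \Bar^c \fg^c = \gr^W C_{CE}(\fg)$ in cohomological degree $(1-\alpha)W = mW$, Theorem \ref{thm:main CE all} is exactly what is required, valid for $g \geq 3W$.

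With all three hypotheses verified in the range $W \leq \lfloor g/3 \rfloor$, the conclusions of Proposition \ref{prop:cohom conc follows koszul 2} translate directly into the theorem. Parts (iii)--(iv) of the proposition identify the space of generators with $V = \gr^1 H(\fg)$ and the space of relations $R$ with the image of the coproduct $\gr^2 H(\Bar \fg) \hookrightarrow \wedge^2 V$, while the dual commutative algebra is forced to have relations $R^\perp$. Since $\gr^2 H(\Bar \fg) \cong (\gr^2 H_{CE}(\fg))^\ast$, this matches verbatim the definitions of $\ft_{(g),1}$, $\ft_{(g),1}^\tp$, $\ft_{(g)}$ and of $A_{(g),1}$, $A_{(g),1}^\tp$, $A_{(g)}$ given in the theorem. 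Part (v) then supplies the claimed isomorphisms on the weight $W \leq g/3$ parts of cohomology, and part (vi) yields Koszulness of each pair in the same weight range.

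No substantial obstacle is expected: all the genuinely hard content has already been absorbed into Proposition \ref{prop:cohom conc follows koszul 2} on the abstract side and into Theorems \ref{thm:main cohom GC vanishing} and \ref{thm:main CE all} on the concrete side. The only thing to be careful about is the bookkeeping of the three genus bounds ($g \geq 2$ for part (ii) of the vanishing theorem, $g \geq W+2$ for part (iii), and $g \geq 3W$ for the Chevalley--Eilenberg concentration), and verifying that the stricter bound $g \geq 3W$ dominates the other two whenever $g \geq 6$.
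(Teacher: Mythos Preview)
Your proposal is correct and follows essentially the same route as the paper: apply Proposition \ref{prop:cohom conc follows koszul 2} with $\alpha=1-m$ and $W_0=\lfloor g/3\rfloor$, verify its hypotheses using Theorems \ref{thm:main cohom GC vanishing} and \ref{thm:main CE all} (noting that $g\geq 3W$ implies $g\geq W+2$), and then read off the presentations and Koszulness from the conclusions. Your write-up is in fact more careful about the bookkeeping of the genus bounds than the paper's own terse argument.
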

\hfill\qed

\subsection{Computation of generators and relations}
It remains to compute explicitly the spaces of generators and relations appearing in Theorem \ref{thm:presentations}. Concretely, in this section we will compute their decomposition into irreducible representations of $\OSp_g$. We assume again that $g\geq 6$.
There will be 6 cases to be considered, namely each of the three dg Lie algebras we study, either for odd or even $m$.
However, to find the spaces of generators we may re-use the computation of the weight-1-cohomology of section \ref{sec:weight 1 comparison}. 

As before, let $\lambda_1,\dots,\lambda_g$ be the usual system of fundamental weights.
We shall use the notation $V(\lambda)$ to denote an irreducible representation of $\OSp_g$ of highest weight $\lambda$.
Note that when $m$ is even $\OSp(g)=O(g,g)$ is not connected, and the irreducible representation is not uniquely determined by the highest weight.
However, one may construct irreducible representations of $O(g,g)$ associated to Young diagrams by using Weyl's construction, see e.g. \cite[19.5]{FultonHarris} or \cite[11.6.5]{Procesi}. 
We shall eventually only need the irreducible representations of $O(g,g)$ of the form $V(j \lambda_1 + k \lambda_2)$, which we define as the irreducible representation associated to a Young diagram with $k$ columns of length 2 and $j$ columns of length 1. 

The decompositions of several tensor products of representations stated below without proof have been computed using the SageMath computer algebra software.

{\bf Case $\GC_{(g),1}^\tp$: }
The complex $\gr^1 \GC_{(g),1}^\tp$ has the form 
\[
S^3(V_g[m])[-2m-1] \to V_g[-2m-2]
\]
with $V_g\cong H_m(W_{g,1})$ and the (surjective) differential obtained by contraction with the canonical bilinear form.
Graphically this corresponds to the differential 
\[
\begin{tikzpicture}
  \node[int,label={$\alpha\beta \gamma$}] (v) at (0,0) {};
\end{tikzpicture}  
\, \to\,  
\begin{tikzpicture}
  \node[int,label=-90:{$\delta$}] (v) at (0,0) {};
  \draw (v) edge [loop] (v);
\end{tikzpicture}  
\quad \quad \text{with $\delta=\langle\alpha,\beta\rangle \gamma
\pm \langle\alpha,\gamma\rangle 
\pm \beta \langle\beta,\gamma\rangle \alpha$}
\]
The cohomology is easily computed, see section \ref{sec:weight 1 comparison}, and in the notation of that section we find that the space of generators of our graded Lie algebra $\ft_{(g),1}^\tp$ is, as $\OSp_g$-representation,
\[
  V_{(g),1}^\tp := \gr^1 H(\GC_{(g),1}^\tp)
  =
  \begin{cases}
 V(\lambda_3)[m-1] & \text{for $m$ odd} \\
 V(3\lambda_1)[m-1] & \text{for $m$ even} 
  \end{cases}\, .
\]
Next we turn to the relations.
We first compute the cohomology of the complex $\gr^2 \GC_{(g),1}^\tp$.
There are four types of graphs contributing, and the complex is schematically depicted as 
\[
\begin{tikzcd}  
  \begin{tikzpicture}
    \node[int,label={$\alpha\beta$}] (v) at (0,0) {};
    \node[int,label={$\gamma\delta$}] (w) at (.7,0) {};
    \draw(v) edge (w);
  \end{tikzpicture}\ar{r}
  & 
  \begin{tikzpicture}
    \node[int,label=-90:{$\alpha\beta$}] (v) at (0,0) {};
    \draw (v) edge [loop] (v);
  \end{tikzpicture}  
  \\
  \ar{u}\ar{r}
\begin{tikzpicture}
  \node[int,label={$\alpha\beta \gamma\delta$}] (v) at (0,0) {};
\end{tikzpicture}
& 
\ar{u}
\begin{tikzpicture}
  \node[int,label={$\alpha\beta$}] (v) at (0,0) {};
  \node[int] (w) at (.7,0) {};
  \draw(v) edge (w) (w) edge[loop] (w);
\end{tikzpicture}
\end{tikzcd}
\]
The right-hand column forms an acyclic subcomplex and can be omitted -- cf. also the proof of part (i) of Theorem \ref{thm:GC comparison} in section \ref{sec:part i proof}.
The left-hand column is, up to degree shifts, given by the cocommutative coproduct 
\[
  \begin{cases}
  \wedge^4 V(\lambda_1) \to S^2(\wedge^2 V(\lambda_1)) & \text{for $m$ odd} \\
  S^4 V(\lambda_1) \to S^2(S^2 V(\lambda_1)) & \text{for $m$ even} 
  \end{cases},
\]
that is an injective map.
We record the decompositions
\begin{align*}
  \wedge^4 V(\lambda_1) &= V(0) \oplus V(\lambda_2) \oplus V(\lambda_4) \\
  S^2(\wedge^2 V(\lambda_1)) &= V(0)^2 \oplus V(\lambda_2)^2 \oplus V(\lambda_4) \oplus V(2\lambda_2)
\end{align*}
for odd $m$ and 
\begin{align*}
  S^4 V(\lambda_1) &= V(0) \oplus V(2\lambda_1)\oplus V(4\lambda_1) \\
  S^2(S^2 V(\lambda_1)) &= V(0)^2\oplus V(2\lambda_1)^2 \oplus V(2\lambda_2) \oplus V(4\lambda_1).  
\end{align*}
for even $m$.
Hence we find that, as $\OSp_g$-representations
\begin{equation}\label{equ:gr2 HGC comp}
  \gr^2 H(\GC_{(g),1}^\tp)[2-2m]
  \cong 
  \begin{cases}
    V(0) \oplus V(\lambda_2) \oplus V(2\lambda_2)  & \text{for $m$ odd} \\
    V(0)\oplus V(2\lambda_1) \oplus V(2\lambda_2)
    & \text{for $m$ even} 
    \end{cases}.
\end{equation}
We compare this to the exterior square of the generators, which are up to degree shifts obtained by the decompositions (for $g\geq 6$)
\begin{equation}\label{equ:generators squared}
  \begin{cases}
    \wedge^2V(\lambda_3) = V(0) \oplus V(\lambda_2) \oplus V(\lambda_4) \oplus V(\lambda_6)  \oplus V(2\lambda_2)  \oplus V(\lambda_2+\lambda_4)  & \text{for $m$ odd} \\
    S^2V(3\lambda_1)
    = V(0) \oplus V(2\lambda_1) \oplus V(2\lambda_2)\oplus V(4\lambda_1)
    \oplus V(2\lambda_1+2\lambda_2)\oplus V(6\lambda_1)
    & \text{for $m$ even} 
   \end{cases}.
\end{equation}
We hence find that the spaces of relations are, as $\OSp_{g}$-representations 
\[
R_{(g),1}^\tp[2-2m] \cong 
\begin{cases}
  V(0) \oplus V(\lambda_4) \oplus V(\lambda_6)  \oplus V(\lambda_2+\lambda_4)
    & \text{for $m$ odd} \\
     V(4\lambda_1)
    \oplus V(2\lambda_1+2\lambda_2)\oplus V(6\lambda_1)
  & \text{for $m$ even} 
\end{cases}
\]

{\bf Case $\GC_{(g),1}$:}
One proceeds anologously, except that one omits all graphs with tadpoles. We suppose that $g\geq 6$ for simplicity.
Concretely, the space of generators is 
\[
  V_{(g),1} := \gr^1 H(\GC_{(g),1})
  =
  \begin{cases}
 V(\lambda_1)[m-1] \oplus V(\lambda_3)[m-1] & \text{for $m$ odd} \\
 V(\lambda_1)[m-1] \oplus V(3\lambda_1)[m-1] & \text{for $m$ even} 
  \end{cases}\, .
\]
By Theorem \ref{thm:GC comparison} we have that $\gr^2 H(\GC_{(g),1})=\gr^2 H(\GC_{(g),1}^\tp)$, so we can re-use the computations from above for that space.
The exterior square of the generators decomposes as 
\[
  \begin{cases}
    \wedge^2(V(\lambda_1)\oplus V(\lambda_3))
     = V(\lambda_2)^2 \oplus V(\lambda_4) \oplus V(\lambda_3+\lambda_1) \oplus 
     \wedge^2( V(\lambda_3))  & \text{for $m$ odd} \\
    S^2(V(\lambda_1)\oplus V(3\lambda_1))
    = V(2\lambda_1)^2
     \oplus V(2\lambda_1+\lambda_2)
      \oplus V(4\lambda_1)
    \oplus 
    S^2V(3\lambda_1)
    & \text{for $m$ even} 
   \end{cases}.
\]
Hence we find that the space of generators is, as $\OSp_g$-representation 
\[
R_{(g),1}[2-2m] \cong R_{(g),1}^\tp \oplus
\begin{cases}
  V(\lambda_2)^2 \oplus V(\lambda_4) \oplus V(\lambda_3+\lambda_1)
    & \text{for $m$ odd} \\
    V(2\lambda_1)^2
    \oplus V(2\lambda_1+\lambda_2)
     \oplus V(4\lambda_1)
  & \text{for $m$ even} 
\end{cases}\, .
\]

{\bf Case $\GC_{(g)}^{\ex}$:}
The generating space 
\[
V_{(g)} := \gr^1 H(\GC_{(g)}^{\ex}).  
\]
can again be copied from section \ref{sec:weight 1 comparison}. We have
\[
  V_{(g)}[1-m]\cong V_{(g),1}^\tp[-m]\cong
\begin{cases}
  V(\lambda_3)
    & \text{for $m$ odd} \\
    V(3\lambda_1)
  & \text{for $m$ even} 
\end{cases}.
\]
Next we consider the complex $\gr^2 \GC_{(g)}^{\ex}=\gr^2 \GC_{(g)}$.
Graphically, it has the following terms 
\[
\begin{tikzcd}  
  \begin{tikzpicture}
    \node[int,label={$\alpha\beta \omega$}] (v) at (0,0) {};
  \end{tikzpicture}
  \ar{r}
  & 
  \begin{tikzpicture}
    \node[int,label={$\alpha\beta$}] (v) at (0,0) {};
    \node[int,label={$\gamma\delta$}] (w) at (.7,0) {};
    \draw(v) edge (w);
  \end{tikzpicture}
  \\
  &
  \ar{u}
\begin{tikzpicture}
  \node[int,label={$\alpha\beta \gamma\delta$}] (v) at (0,0) {};
\end{tikzpicture}
\end{tikzcd}
\]
The right-hand column is the complex computing $\gr^2 H(\GC_{(g),1})$, so we ca re-use the computation from above.
The graphs in the upper left have two decorations in $H_m(W_g)$ and one ($\omega$) in $H_{2m}(W_g)$. Representation-wise, this contributes 
\[
  \begin{cases}
    V(0)\oplus V(\lambda_2)
      & \text{for $m$ odd} \\
    V(0) \oplus  V(2\lambda_1)
    & \text{for $m$ even} 
  \end{cases}.
\]

The upper horizontal arrow replaces $\omega$ by an edge towards a new vertex decorated by the reduce diagonal $\Delta_1$.
One checks by a small computation that this map induces an injection, and so (using \eqref{equ:gr2 HGC comp}) we find that 
\[
\gr^2 H(\GC_{(g)}^{\ex}) \cong V(2\lambda_2)[2m-2],  
\]
and the formula is valid for both even and odd $m$.
Using \eqref{equ:generators squared} for the exterior square of the generators we determine that the relations are 
\[
R_{(g)} \cong 
\begin{cases}
  V(0) \oplus V(\lambda_2) \oplus V(\lambda_4) \oplus V(\lambda_6)   \oplus V(\lambda_2+\lambda_4)  & \text{for $m$ odd} \\
  V(0) \oplus V(2\lambda_1) \oplus V(4\lambda_1)
  \oplus V(2\lambda_1+2\lambda_2)\oplus V(6\lambda_1)
  & \text{for $m$ even} 
\end{cases}
\]

\subsection{Proofs of statements of the introduction}
We finally turn to the remaining statements of the introduction. First, Theorem \ref{thm:main t} is one case of Theorem \ref{thm:presentations}, if combined with the explicit computation of the relations in this subsection.
Next, the zigzag of Theorem \ref{thm:main HCE} can be taken to be taken to be 
\[
C_{CE}(\GCex_{(g)}) \leftarrow 
\tru^{[m]} C_{CE}(\GCex_{(g)})
\to 
H^{[m]}_{CE}(\GCex_{(g)})
\leftarrow A_{(g)}.
\]
This induces isomorphisms on the cohomolgy in weight $W$ cohomology if $g\geq 3W\geq 6$ by Theorem \ref{thm:presentations}, so that Theorem \ref{thm:main HCE} is shown.
Finally, Corollary \ref{cor:Hain} is just a less precise rewording of statement (vi) of Proposition \ref{prop:cohom conc follows koszul 2} applied to $\fg=\GCex_{(g)}$ in the case $m=1$.










\begin{thebibliography}{1}

\bibitem{AKKN0}
Anton Alekseev, Nariya Kawazumi, Yusuke Kuno, Florian Naef.
\newblock Higher genus Kashiwara-Vergne problems and the Goldman-Turaev Lie bialgebra.
\newblock C. R. Math. Acad. Sci. Paris 355 (2017), no. 2, 123–127. 

\bibitem{AKKN} 
Anton Alekseev, Nariya Kawazumi, Yusuke Kuno, Florian Naef.
\newblock The Goldman-Turaev Lie bialgebra and the Kashiwara-Vergne problem in higher genera.
\newblock Preprint arXiv:1804.09566, 2018.

\bibitem{AlekseevTorossian}
Anton Alekseev, Charles Torossian.
\newblock The Kashiwara-Vergne conjecture and Drinfeld's associators.
\newblock Ann. of Math. (2) 175 (2012), no. 2, 415–463.



 


\bibitem{AsadaKaneko}
Mamoru Asada and Masanobu Kaneko.
\newblock On the automorphism group of some pro-{$l$} fundamental groups.
\newblock Galois representations and arithmetic algebraic geometry ({K}yoto, 1985/{T}okyo, 1986), Adv. Stud. Pure Math. 12, 137--159, North-Holland, Amsterdam, 1987.




\bibitem{BM}
Marcel B\"okstedt and Erica Minuz.
\newblock Graph cohomologies and rational homotopy type of configuration spaces.
\newblock Preprint, arXiv:1904.01452, 2019.

\bibitem{BorelBook}
Armand Borel.
\newblock Linear algebraic groups.
\newblock Second edition. Graduate Texts in Mathematics, 126. Springer-Verlag, New York, 1991. xii+288 pp. ISBN: 0-387-97370-2.


\bibitem{CamposWillwacher}
Ricardo Campos and Thomas Willwacher.
\newblock A model for configuration spaces of points.
\newblock preprint arXiv:1604.02043.



\bibitem{CGP2}
Melody Chan, Soren Galatius and Sam Payne.
\newblock Topology of moduli spaces of tropical curves with marked points
\newblock arxiv:1903.07187, 2019.




\bibitem{Enriquez} Benjamin Enriquez.
\newblock Elliptic associators.
\newblock Selecta Math. (N.S.) 20.2 (2014), pp. 491--584.

\bibitem{FultonHarris} William Fulton, Joe Harris.
\newblock Representation theory.
A first course. 
\newblock Graduate Texts in Mathematics, 129. Readings in Mathematics. Springer-Verlag, New York, 1991. xvi+551 pp. ISBN: 0-387-97527-6; 0-387-97495-4 

\bibitem{Gatsinzi} J.-B. Gatsinzi.
\newblock The homotopy Lie algebra of classifying spaces
\newblock J. Pure Appl. Algebra 120 (1987), no. 3, 281--289



\bibitem{Matteo}
Matteo Felder.
\newblock Graph complexes and higher genus Grothendieck-Teichmüller Lie algebras.
\newblock Preprint, arXiv:2105.02056, 2021.



\bibitem{Hain} Richard Hain. 
\newblock Infinitesimal presentations of the Torelli groups.
\newblock J. Amer. Math. Soc.10.3(1997), pp. 597--651.

\bibitem{Hain2} Richard Hain. 
\newblock Genus 3 mapping class groups are not K\"ahler. 
\newblock J. Topol. 8 (2015), no. 1, 213--246.

\bibitem{idrissi}
Najib Idrissi.
\newblock The Lambrechts-Stanley model of configuration spaces.
\newblock Invent. Math. 216 (2019), no. 1, 1–68.




\bibitem{KRWTor}
Alexander Kupers and Oscar Randal-Williams.
\newblock On the cohomology of Torelli groups.
\newblock preprint arXiv:1901.01862, 2019.

\bibitem{KRW}
Alexander Kupers and Oscar Randal-Williams.
\newblock On diffeomorphisms of even-dimensional discs.
\newblock preprint arXiv:2007.13884, 2020.

\bibitem{KRWnew}
Alexander Kupers and Oscar Randal-Williams.
\newblock In preparation.






\bibitem{Loday}
Jean-Louis Loday.
\newblock Cyclic homology.
\newblock Appendix E by María O. Ronco. Grundlehren der Mathematischen Wissenschaften, 301. Springer-Verlag, Berlin, 1992. xviii+454 pp. ISBN: 3-540-53339-7.

\bibitem{lodayval}
J.-L. Loday and B. Vallette.
\newblock Algebraic operads.
\newblock {\em Grundlehren Math. Wiss.}, 346, Springer, Heidelberg, 2012. 





\bibitem{Milne}
J. S. Milne.
\newblock Algebraic groups.
The theory of group schemes of finite type over a field. 
\newblock Cambridge Studies in Advanced Mathematics, 170. Cambridge University Press, Cambridge, 2017. xvi+644 pp. ISBN: 978-1-107-16748-3 

\bibitem{NaefQin}
Florian Naef and Yuting Qin.
\newblock The Elliptic Kashiwara-Vergne Lie algebra in low weights.
\newblock J. Lie theory 31 (2021), no. 2, p. 583--598. 



\bibitem{Procesi}
C. Procesi.
\newblock The invariant theory of $n\times n$ matrices.
\newblock Advances in Math. 19 (1976), no. 3, 306--381. 

\bibitem{severa}
Pavol \v Severa and Thomas Willwacher.
\newblock Equivalence of formalities of the little discs operad.
\newblock {\em Duke Math. J.} 160(1):175--206, 2011.

\bibitem{SchnepsRaphael}
Elise Raphael and Leila Schneps.
\newblock On linearised and elliptic versions of the Kashiwara-Vergne Lie algebra.
\newblock Preprint arXiv:1706.08299, 2017.






\bibitem{valHTHA} Bruno Vallette.
\newblock Homotopy theory of homotopy algebras.
\newblock Ann. Inst. Fourier (Grenoble) 70 (2020), no. 2, 683--738. 



\bibitem{Willgrt}
Thomas Willwacher.
\newblock {M. Kontsevich's graph complex and the Grothendieck-Teichm\"uller Lie
  algebra}.
\newblock \emph{Invent. Math.}, 200(3): 671--760 (2015).

\bibitem{WillDegree}
Thomas Willwacher.
\newblock On truncated mapping spaces of $E_n$ operads.
\newblock arXiv:1911.00709







\end{thebibliography}
\end{document}